\numberwithin{equation}{section}
\newtheorem{thm}{Theorem}[section]
\newtheorem{prop}[thm]{Proposition}
\newtheorem{conj}[thm]{Conjecture}
\newtheorem{lem}[thm]{Lemma}
\newtheorem{cor}[thm]{Corollary}
\theoremstyle{definition}
\newtheorem{defin}[thm]{Definition}
\newtheorem{rmk}[thm]{Remark}
\newtheorem{ex}[thm]{Example}
\newcommand{\meet}[2]{#1 \cap #2}
\newcommand{\join}[2]{\overleftrightarrow{#1 #2}}
\newcommand{\lattice}[4]{\langle (#1,#2), (#3,#4) \rangle} 
\newcommand{\appendbar}[1]{\multicolumn{1}{c|}{#1}}
\begin{document}
\title{The Devron Property}
\author{Max Glick}
\address{Department of Mathematics, University of Minnesota,
Minneapolis, MN 55455, USA} \email{mglick@umn.edu}

\subjclass[2010]{
37J35, 
05A15,  
51A05 
}

\keywords{pentagram map, discrete integrable system, polygon recutting}

\begin{abstract}
We introduce a criterion called the Devron property that a discrete dynamical system can possess.  The Devron property is said to occur when a class of highly singular inputs of a mapping $F$ are carried by some iterate of $F^{-1}$ to a class of highly singular inputs of $F^{-1}$.  The inspiration for this definition is the discovery by R. Schwartz that the pentagram map exhibits this kind of behavior.  We investigate occurrences of the Devron property in a number of different dynamical systems.  
\end{abstract}

\date{\today}
\thanks{Partially supported by NSF grants DMS-1303482 and DMS-0943745.} 

\maketitle

\tableofcontents
\section{Introduction} \label{secIntro}
Consider a dynamical system obtained by iterating some invertible, algebraic transformation $F$.  If the formulas for $F$ and $F^{-1}$ are rational, then the system will only be defined for part of the state space.  We say that $F$ is \emph{singular} at inputs lying outside its domain of definition.  It is often possible to learn about the system as a whole, by studying the behavior of its singularities.  

Singularity confinement, proposed by B. Grammaticos, A. Ramani, and V. Papageorgiou \cite{GRP}, provides a powerful example of this approach.  Roughly speaking, if the singularities of $F$ are confined (i.e. last only a small number of iterations), then it is likely that $F$ is integrable in some sense.  Moreover, singularity confinement suggests that it should be possible to blow-up, or desingularize, the map obtaining a biregular dynamical system.  The current paper examines a different singularity condition which we name the Devron property.  

Following \cite{GRP}, say that a singularity of $F$ at $x$ is \emph{confined} if there exists a $k>1$ such that
\begin{enumerate}
\item $F^k$ is defined at $x$, say $F^k(x)=y$, and
\item the initial data $x$ can be recovered from $y$.
\end{enumerate}
We consider the question of what happens when moving in the opposite direction from a singularity.  If $F$ is singular at $x$, then one can generally iterate $F^{-1}$ starting from $x$ without problem.  However, for a number of systems and singularity classes we find that $F^{-1}$ itself becomes singular after a predictable, but potentially large, number of steps.  In such cases, we say that the system exhibits the Devron property.  The Devron property is named after a system in the Star Trek universe in which a temporal anomaly propagating backwards in time is discovered.  

The only previous results along these lines of which we are aware concern the pentagram map and its generalizations.  The pentagram map $T$, defined on the space of polygons in the plane, is given by the construction pictured in Figure \ref{figT}.  The vertices of $B=T(A)$ are the intersection points of consecutive shortest diagonals of $A$.  The inventor of the map, R. Schwartz, discovered the following amazing property that it possesses.

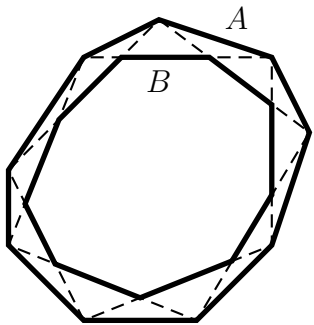
\begin{figure} 
\begin{pspicture}(6,5)
\rput(0,-.5){
  \pspolygon[linewidth=2pt](1,2)(1,3)(2,4.5)(3,5)(4.5,4.5)(5,3.5)(4.5,2)(3.5,1)(2,1)
  \pspolygon[linestyle=dashed](1,2)(2,4.5)(4.5,4.5)(4.5,2)(2,1)(1,3)(3,5)(5,3.5)(3.5,1)
  \pspolygon[linewidth=2pt](1.22,2.55)(1.67,3.67)(2.5,4.5)(3.67,4.5)(4.5,3.87)(4.5,2.67)(3.97,1.79)(2.75,1.3)(1.62,1.75)
  \uput[ur](3.75,4.75){$A$}
  \uput[d](3,4.5){$B$}
}
\end{pspicture}
\caption{An application of the pentagram map with input $A$ and output $B=T(A)$}
\label{figT}
\end{figure}

\begin{thm}[{\cite[Theorem 1.3]{S2}}, {\cite[Theorem 7.6]{G}}]
Let $A$ be an axis-aligned $2m$-gon, as in the left half of Figure \ref{figDevronT}.  Then $B=T^{m-2}(A)$ has its odd vertices all collinear and its even vertices all collinear, as in the right half of Figure \ref{figDevronT}.
\label{thmDevronT}
\end{thm}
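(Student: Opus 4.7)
The plan is to reformulate the theorem projectively and then track a descending family of degeneracy conditions under $T$. An axis-aligned $2m$-gon is, up to projective transformation, a $2m$-gon whose odd-indexed edges all pass through a common point $p_1$ and whose even-indexed edges all pass through a common point $p_2$; in the axis-aligned representative, $p_1$ and $p_2$ are the two points at infinity along the coordinate axes. The desired conclusion, that the odd vertices and the even vertices each lie on a line, is the projective dual of this hypothesis, so $PGL_3$-equivariance of $T$ reduces the entire problem to a combinatorial tracking of degeneracy.

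I would introduce a chain $\mathcal{F}_0 \supset \mathcal{F}_1 \supset \cdots \supset \mathcal{F}_{m-2}$ of projectively defined subsets of polygon space, with $\mathcal{F}_0$ the axis-aligned class and $\mathcal{F}_{m-2}$ the class of $2m$-gons whose odd (resp.\ even) vertices are collinear. A natural candidate for $\mathcal{F}_k$ is the class of $2m$-gons in which $k$ of the odd vertices and $k$ of the even vertices lie on designated lines, while the remaining odd edges (resp.\ even edges) are concurrent at designated points. The heart of the argument is then the stepwise inclusion $T(\mathcal{F}_k) \subseteq \mathcal{F}_{k+1}$, iterated $m-2$ times.

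To verify the inductive step, I would pass to Schwartz's corner-invariant coordinates $(x_i, y_i)$, in which $T$ has an elementary birational form. The edge-concurrency and vertex-collinearity conditions translate into explicit algebraic constraints on these invariants, and a direct local computation should show that at each application of $T$ exactly one edge-concurrency is consumed and one new vertex-collinearity is produced. As a warm-up, the base case $m=3$ can be verified by direct coordinate computation with a representative axis-aligned hexagon, confirming that the six vertices of $T(A)$ arrange themselves into the predicted two-line configuration.

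The main obstacle is not the inductive step per se but the correct choice of the intermediate families $\mathcal{F}_k$: a naive definition will fail because the designated lines and points must themselves evolve under $T$ in a coordinated, predictable way, and identifying the right evolution likely requires either a careful projective geometry argument or extracting the pattern from the corner-invariant calculation in low-$m$ cases. Once the families are pinned down, the stepwise inclusion should reduce to an essentially local computation from the explicit formula for $T$.
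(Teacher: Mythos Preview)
The paper does not give its own proof of this theorem; it is quoted as a known result from \cite{S2} and \cite{G}. What the present paper actually proves (Theorem~\ref{thmPenta}) is a related but weaker statement in a broader setting: for arbitrary twisted $n$-gons, an axis-aligned polygon becomes dual axis-aligned within at most $n-1$ steps. The sharp bound $m-2$ for closed $2m$-gons, which is the content of Theorem~\ref{thmDevronT}, is not re-derived here; the paper explicitly contrasts its bound with the earlier one after the proof of Theorem~\ref{thmPenta}. The method used for the paper's own result is entirely different from your proposal: it identifies the pentagram map with a periodic $Y$-system, expresses the $F$-polynomials as determinants of certain lifted matrices (Section~\ref{secY}), and then uses lattice periodicity to force vanishing (Proposition~\ref{propFEq0}).

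Regarding your proposal itself: it is a plan rather than a proof, and you have correctly flagged its gap. Specifying the interpolating families $\mathcal{F}_k$ is not a bookkeeping detail to be filled in later---it is essentially the entire content of the argument. Your heuristic that each application of $T$ ``consumes one edge-concurrency and produces one vertex-collinearity'' has no evident mechanism behind it: $T$ acts on all vertices simultaneously and the corner invariants mix nonlocally under iteration, so there is no a priori reason degeneracies should convert one at a time in such a tidy fashion (and the arithmetic---two pencils of $m$ concurrent edges becoming two lines of $m$ collinear vertices in exactly $m-2$ steps---does not obviously support a one-for-one exchange). The published proofs do not proceed via such a filtration; Schwartz \cite{S2} uses Dodgson condensation on an array of projective invariants, and \cite{G} uses the cluster-algebraic $F$-polynomial machinery, in both cases obtaining the collapse at step $m-2$ from a global determinantal or polynomial identity rather than a step-by-step degeneration.
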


\begin{figure}
\begin{pspicture}(10,6)
  \pspolygon(1.5,2)(1.5,2.5)(1,2.5)(1,4)(2,4)(2,4.5)(4,4.5)(4,3)(3,3)(3,2)
  \psline[linestyle=dashed](6.75,.5)(9.25,5.5)
  \psline[linestyle=dashed](9.25,.5)(6.75,5.5)
  \pspolygon(7,5)(8.5,4)(7.5,4)(8.25,3.5)(8.25,2.5)(7.5,2)(8.75,1.5)(7,1)(9,1)(6.75,.5)
\end{pspicture}
\caption{An axis-aligned $2m$-gon (left) and a $2m$-gon with its vertices all lying on two lines (right), where $m=5$.}
\label{figDevronT}
\end{figure}
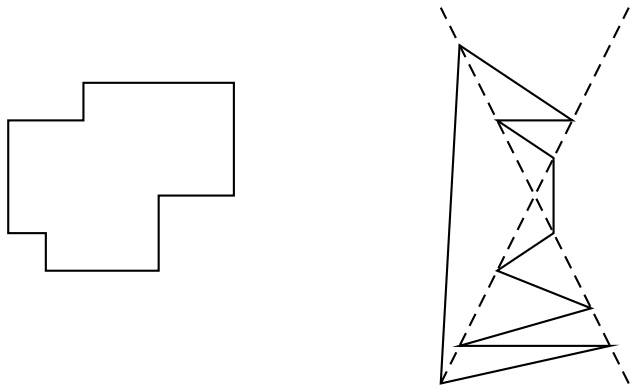

The connection with singularity theory arises from the observation that polygons $B$ as in the Theorem are in a sense as singular as possible for the pentagram map.  Specifically, applying $T$ to $B$ leads to a polygon all of whose vertices are equal (to the point of intersection of the two lines containing the vertices of $B$).  It then becomes impossible to continue iterating $T$.  One can show using Theorem \ref{thmDevronT} that applying $T^{-1}$ to $B$ for $m-2$ steps produces a polygon $A$ that is projectively equivalent to an axis-aligned polygon.  Such $A$ can be considered as singular as possible for $T^{-1}$ because the sides of $T^{-1}(A)$ are all equal.

We make several remarks about Theorem \ref{thmDevronT} in an attempt to illustrate principles that seem to be present in most systems with the Devron property.  First, the result of the Theorem is, for lack of a better word, surprising.  The polygon $A$ is clearly of a very particular type, but applying $T$ a few times leads to one that seems generic to the naked eye.  Imagining that $m$ is large, this will continue for many steps until out of nowhere, another clearly unusual polygon $B$ is reached.

As mentioned, the polygons $A$ and $B$ are as singular as possible for the maps $T^{-1}$ and $T$ respectively.  This is of course not a precise statement, and there is no mention of the severity of singularities in the official definition of the Devron property given in the next section.  However, it seems heuristically that these are the types of singularities for which the Devron property is most likely to be present.

Lastly, we argue that presence of the Devron property can be useful in better understanding the underlying system as a whole.  For instance, consider a polygon $B$ as in Theorem \ref{thmDevronT}.  In \cite{G2} we identify the type of singularity that exists at $B$ as one of the few that are not confined for the pentagram map.  In other words, there is no way to make sense of $T^k(B)$ for any $k$.  Polygons of this sort present a challenge for desingularizing the map.  However, the Devron property says that $T^{-1}$ can only be iterated $m-2$ times starting from $B$, where $m$ is half the number of sides.  Put another way, $B$ is not in the image of $T^k$ for any $k > m-2$.  This fact suggests that a higher iterate of the pentagram map such as $T^m$ could have a more manageable singular structure.

The Devron property for the pentagram map has been generalized in a number of directions.  Schwartz originally proved an analogue pertaining to the map which uses $d$-diagonals (lines connecting vertices $d$ apart) with $d>2$ in place of shortest diagonals \cite{S1}.  In \cite{G}, we extended Theorem \ref{thmDevronT} to a larger class of polygons called twisted polygons.  In addition, Schwartz and Tabachnikov \cite{ST} found results of this sort where the conditions on $A$ and $B$ are that they are circumscribed and inscribed in a conic respectively.  We note that such polygons are not singular for the relevant maps, so this last result is not actually of Devron type.

Despite the number of variations, these results continue to lack a greater context.  The purpose of this paper is to show that the Devron property actually holds in a wide variety of systems including ones with no apparent connection to the pentagram map.  

The first several systems we explore are algebraic in nature, meaning unlike the pentagram map they are defined explicitly via ratios of polynomials.  Specifically, we consider the octahedron recurrence
\begin{displaymath}
f_{i,j,k-1}f_{i,j,k+1} = f_{i-1,j,k}f_{i+1,j,k} -  f_{i,j-1,k}f_{i,j+1,k}
\end{displaymath}
and the $Y$-system recurrence
\begin{displaymath}
Y_{i,j,k+1}Y_{i,j,k-1} = \frac{(1+Y_{i,j-1,k})(1+Y_{i,j+1,k})}{(1+Y_{i-1,j,k}^{-1})(1+Y_{i+1,j,k}^{-1})}.
\end{displaymath}
In both cases, we consider $k$ to be the time direction, and take the variables with two consecutive values of $k$ to be the initial conditions.  To obtain a finite dimensional system, we assume that the initial conditions are periodic with respect to some two dimensional lattice.  

The octahedron recurrence becomes singular if some $f_{i,j,k}=0$, as $f_{i,j,k}$ appears in the denominator when computing $f_{i,j,k+2}$.  We show that if a whole layer of 0's is reached for some $k$, then propagating in the opposite direction eventually leads to another layer of all 0's.  We identify the distance between the layers as a simple function of the periodicity lattice.

The $Y$-system becomes singular if some $Y_{i,j,k}=-1$.  There is an analogous result to that for the octahedron recurrence, namely if a layer of all $-1$'s is encountered in one direction, that another such layer will also be encountered going in the other direction.  In this case, we only prove an upper bound on the distance between the layers.

Using results of \cite{G}, the pentagram map is related by a certain set of coordinates to a case of the $Y$-system.  We deduce a new Devron-type result for the pentagram map.  Using similar reasoning, we consider certain higher dimensional pentagram-like maps which M. Gekhtman, M. Shapiro, S. Tabachnikov, and A. Vainshtein \cite{GSTV} showed also fit within the $Y$-system framework.  Obtaining the Devron property for these systems amounts to identifying the appropriate higher dimensional generalization of an axis-aligned polygon.


We next consider polygon recutting, another geometric dynamical system, which was introduced by V. Adler \cite{A}.  Recutting is defined by cutting a polygon along a shortest diagonal to remove a triangle, and then reattaching the triangle along the same diagonal but with the opposite orientation.  Figure \ref{figRecut} shows an example of this procedure.  This operation is singular if and only if the two vertices used to define the shortest diagonal are equal.

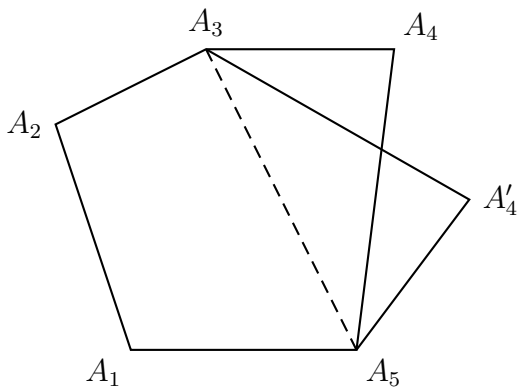
\begin{figure}
\begin{pspicture}(8,6)
\rput(2,0){
\pnode(1,1){A1}
\pnode(0,4){A2}
\pnode(2,5){A3}
\pnode(4.5,5){A4}
\pnode(5.5,3){B4}
\pnode(4,1){A5}

\pspolygon(A1)(A2)(A3)(A4)(A5)
\psline(A3)(B4)(A5)
\psline[linestyle=dashed](A3)(A5)

\uput[dl](A1){$A_1$}
\uput[l](A2){$A_2$}
\uput[u](A3){$A_3$}
\uput[ur](A4){$A_4$}
\uput[r](B4){$A_4'$}
\uput[dr](A5){$A_5$}

}
\end{pspicture}
\caption{An application of polygon recutting}
\label{figRecut}
\end{figure}

We show that the Devron property holds for bipartite polygon recutting, defined as follows.  Let $A$ be a $2n$-gon and color its vertices alternately white and black.  Bipartite recutting means doing recutting at all of the black vertices, then all the white vertices, then all the black vertices, and so on.  The result is that if all of the black vertices of $A$ are equal, then $n-1$ steps of bipartite recutting carries $A$ to another polygon half of whose vertices are equal.

The last system for which we prove the Devron property is a variant of the generalized discrete Toda system studied by S. Iwao \cite{I}.  The system is built from applications of the following transformation to pairs of consecutive columns of a $2 \times n$ matrix
\begin{displaymath}
\left[\begin{array}{cc}
x_1 & y_1 \\
x_2 & y_2 \\
\end{array}
\right]
\mapsto \left[\begin{array}{cc}
y_1\frac{x_2+y_2}{x_1+y_1} & x_1\frac{x_2+y_2}{x_1+y_1} \\
y_2\frac{x_1+y_1}{x_2+y_2} & x_2\frac{x_1+y_1}{x_2+y_2} \\
\end{array}\right]
\end{displaymath}
We identify this transformation as a generalization of polygon recutting at a single vertex.  Using polygon recutting as a model, we obtain the Devron property for the system in which these transformations are carried out in a bipartite manner on a matrix with an even number of columns.

Many of the systems discussed in this paper have a known description in terms of cluster algebras as defined by S. Fomin and A. Zelevinsky \cite{FZ}.  Indeed, any cluster algebra gives rise to a dynamical system and it is reasonable to ask if there are more examples for which the Devron property holds.  Although cluster algebras will often be in the background, we will not explicitly discuss them further.

The remainder of this paper is organized as follows.  Section~ \ref{secDef} gives a formal definition for the Devron property and provides a first example.  In Section~ \ref{secMatrices} we study periodic matrices (i.e. infinite matrices whose entries are periodic with respect to some lattice) proving results needed for Section~ \ref{secOct}.  Sections \ref{secOct} through \ref{secToda} are each devoted to introducing a system or family of systems and proving that the Devron property holds.  The systems are the octahedron recurrence (Section~ \ref{secOct}), the $Y$-system (Section~ \ref{secY}), the pentagram map and its higher/lower dimensional analogues (Section~ \ref{secPenta}), polygon recutting (Section~ \ref{secCut}), and the generalized discrete Toda system (Section~ \ref{secToda}).  The only interdependence is that the results in Section~ \ref{secPenta} use those from Section~ \ref{secY} and results in Section~ \ref{secToda} use those from Section~ \ref{secCut}.  Lastly, Section~ \ref{secConj} discusses a number of systems for which we observe the Devron property, but do not have a proof that it holds.

\medskip

\textbf{Acknowledgments.} I thank Pavlo Pylyavskyy for many helpful suggestions, including pointing me to some of the systems that appear in this paper.  I have enjoyed discussing this subject with Richard Schwartz, and I thank him for his encouragement. 

\section{Main definition} \label{secDef}
We begin by fixing some notation and terminology for rational maps.  For concreteness, we assume here that the underlying space is $\mathbb{C}^n$, but all definitions extend to a suitable class of algebraic varieties.

A \emph{rational map} $F: \mathbb{C}^n \to \mathbb{C}^n$ is an expression of the form
\begin{displaymath}
F = \left(\frac{P_1}{Q_1}, \ldots, \frac{P_n}{Q_n}\right) 
\end{displaymath}
where for each $i=1,\ldots,n$, $P_i$ and $Q_i$ are polynomials in $x_1,\ldots, x_n$ with no nonconstant common factors.  Two rational maps can be composed in the usual way, by substituting and simplifying.  Say that $F$ is \emph{birational} if there exists a rational map $G$ such that $F \circ G$ and $G \circ F$ both equal the identity.  In this case, write $G = F^{-1}$.  If $k \geq 1$ then let
\begin{align*}
F^k &= \underbrace{F \circ F \cdots \circ F}_k \\
F^{-k} &= \underbrace{F^{-1} \circ F^{-1} \cdots \circ F^{-1}}_k
\end{align*}

Let $a = (a_1,\ldots, a_n) \in \mathbb{C}^n$.  Say that $F$ is \emph{defined} at $x=a$ if $Q_i(a) \neq 0$ for all $i$.  In this case, the \emph{value} of $F$ at $a$ is
\begin{displaymath}
F(a) = \left(\frac{P_1(a)}{Q_1(a)}, \ldots, \frac{P_n(a)}{Q_n(a)}\right) \in \mathbb{C}^n.
\end{displaymath}
If some $Q_i(a)=0$ say that $F$ has a \emph{singularity} at $x=a$.  The set of points where $F$ is defined is called the \emph{domain of definition} of $F$.  The set of points where $F$ is singular is called its \emph{singular locus}.  If $U \subseteq \mathbb{C}^n$, let $F(U) \subseteq \mathbb{C}^n$ be the set of values of $F$ at points in $U$ where it is defined.

Note that even if $F$ is birational, it need not be injective on its domain of definition.  For instance, the map $F(x,y) = (y,y/x)$ is birational with inverse $G(x,y) = (x/y,x)$.  However, $F(x,0) = (0,0)$ for all $x \in \mathbb{C} 
\setminus \{0\}$.

\begin{defin} \label{defDevron}
Let $F:X \to X$ be a birational map and suppose $U, V \subseteq X$.  Say that $U,V$ is a \emph{Devron pair} with respect to $F$ if
\begin{itemize}
\item $F^{-a}$ is singular on $U$ for some (small) $a > 0$,
\item $F^{b}$ is singular on $V$ for some (small) $b >0$, 
\item there exists an $M > 0$ such that $F^M(U) \subseteq V$ and $F^{-M}(V) \subseteq U$, and
\item for generic $x \in U$, $F^M$ is defined at $x$ and $F^{-M}$ is defined at $y=F^M(x)$.
\end{itemize}
Call $M$ the \emph{width} of the pair.
\end{defin}

A typical picture, with $a=b=2$ and $M=3$ would be
\begin{displaymath}
F^{-1}(U) \longleftarrow U \longleftrightarrow F(U) \longleftrightarrow F^2(U) \longleftrightarrow V \longrightarrow F(V)
\end{displaymath}
where $\longrightarrow$ denotes a restriction of $F$, $\longleftarrow$ denotes a restriction of $F^{-1}$ and $\longleftrightarrow$ denotes a restriction of $F$ that is birational.

\begin{rmk}
It may appear simpler to consider the pair $U' = F^{-(a-1)}(U)$ and $V' = F^{b-1}(V)$ above.  The problem is that at points that are ``too close'' to the singularity, it might be impossible to propagate in either direction.  For instance, $F$ might not be defined anywhere on $U'$.
\end{rmk}

\begin{rmk}
The first three conditions are enough to ensure that $F$ eventually becomes singular for $x \in U$.  The last condition is needed to ensure that it generically does take $M$ steps (as opposed to fewer) to reach $V$ or encounter another singularity.  The statements that $F^m$ is defined at $x$ and $F^{-m}$ is defined at $F^{m}(x)$ hold for a Zariski open subset of $x \in U$.  As such, to verify the last condition in Definition \ref{defDevron} it is sufficient to find a single $x$ for which these properties hold.
\end{rmk}

The existence of a Devron pair for a system is perhaps not all that surprising.  It could be that for dimension reasons we expect there to be a large class of inputs $V$ such that say $F$ and $F^{-13}$ are both singular at $V$, making it likely that $F^{-12}(V),V$ is a Devron pair.  However, it would not be easy to identify such inputs, since it potentially requires computing $F^{-13}$.  We are interested in the case when $U$ and $V$ are certain natural and easily described subsets of the state space.  If a Devron pair of this sort exists, we say that $F$ exhibits the \emph{Devron property}.

We conclude this section with a simple example of a map exhibiting the Devron property.  Define $F: \mathbb{C}^6 \to \mathbb{C}^6$ by
\begin{equation}
F(a,b,c,d,e,f) = \left(d, \frac{b^2-df}{e}, f, \frac{d^2-fb}{a}, b, \frac{f^2-bd}{c} \right).
\label{eqFirstF}
\end{equation}
This is in fact birational, with inverse
\begin{displaymath}
F^{-1}(a,b,c,d,e,f) = \left(\frac{a^2-ce}{d}, e, \frac{c^2-ea}{f}, a, \frac{e^2-ac}{b}, c\right).
\end{displaymath}
Define $U$ and $V$ by
\begin{align*}
U &= \{(t,b,t,d,t,f)\} \subseteq \mathbb{C}^6 \\
V &= \{(a,t,c,t,e,t)\} \subseteq \mathbb{C}^6 
\end{align*}

\begin{prop}
Given the above, $U,V$ is a Devron pair of width 2.
\end{prop}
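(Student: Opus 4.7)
The plan is to verify the four clauses of Definition \ref{defDevron} for the pair $U, V$ with $M = 2$, leaning on the fact that $F$ and $F^{-1}$ have the same shape after swapping the roles of the odd-indexed and even-indexed coordinates.

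I would first compute $F$ on a generic point $(t,b,t,d,t,f) \in U$: the formula gives $(d, b', f, d', b, f')$ where $b' = (b^2-df)/t$, $d' = (d^2-fb)/t$, $f' = (f^2-bd)/t$. Applying $F$ again produces a tuple whose odd-indexed entries are $d', f', b'$ and whose even-indexed entries are the three fractions
\[
\frac{(b')^2 - d'f'}{b},\quad \frac{(d')^2 - f'b'}{d},\quad \frac{(f')^2 - b'd'}{f}.
\]
The heart of the argument will be the polynomial identity
\[
(b^2 - df)^2 - (d^2 - fb)(f^2 - bd) \;=\; b\,(b^3 + d^3 + f^3 - 3bdf),
\]
together with its two cyclic analogues. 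These collapse each of the three fractions above to the same symmetric expression $(b^3 + d^3 + f^3 - 3bdf)/t^2$, so the even-indexed entries of $F^2(t,b,t,d,t,f)$ all agree, proving $F^{2}(U) \subseteq V$. The inclusion $F^{-2}(V) \subseteq U$ will follow from the mirror calculation, with $F^{-1}$ in place of $F$ and the parity of indices swapped.

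For the singularity clauses I would just do a direct evaluation: one obtains $F^{-1}(t,b,t,d,t,f) = (0,t,0,t,0,t)$ and then $F^{-2}(t,b,t,d,t,f) = (0,0,0,0,0,0)$, at which $F^{-1}$ is singular; hence $F^{-3}$ is singular on $U$, verifying the first clause with $a=3$. The symmetric evaluation $F(a,t,c,t,e,t) = (t,0,t,0,t,0)$ followed by $F^{2}(a,t,c,t,e,t) = (0,\ldots,0)$ yields $F^{3}$ singular on $V$ with $b=3$. For the final clause, by the remark following Definition \ref{defDevron} it suffices to exhibit one numerical $x \in U$ for which both $F^{2}(x)$ and $F^{-2}(F^{2}(x))$ are defined; any sufficiently generic choice such as $x = (1,1,1,2,1,3)$ will work, since all intermediate denominators appearing in the formulas above are then nonzero.

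The only real obstacle is the polynomial identity displayed above, which is a one-line symbolic expansion; everything else is substitution together with the built-in $F \leftrightarrow F^{-1}$ symmetry of the system.
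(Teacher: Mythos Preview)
Your proposal is correct and follows essentially the same approach as the paper's proof: the same direct computations of $F^{-1}$ and $F^{-2}$ on $U$ to establish singularity, the same polynomial identity $(b^2-df)^2 - (d^2-fb)(f^2-bd) = b(b^3+d^3+f^3-3bdf)$ to show the even-indexed entries of $F^2(t,b,t,d,t,f)$ coincide, the same appeal to the $F \leftrightarrow F^{-1}$ symmetry, and even the same witness point $x=(1,1,1,2,1,3)$ for the genericity clause.
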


\begin{proof}
By direct calculation
\begin{align*}
F^{-1}(t,b,t,d,t,f) &= (0,t,0,t,0,t) \\
F^{-2}(t,b,t,d,t,f) &= (0,0,0,0,0,0)
\end{align*}
It follows that $F^{-3}$ is not defined on $U$.  A similar calculation shows that $F^{3}$ is not defined on $V$.

Now let $(t,b,t,d,t,f) \in U$.  Then
\begin{align*}
F(t,b,t,d,t,f) &= \left(d, \frac{b^2-df}{t}, f, \frac{d^2-fb}{t}, b, \frac{f^2-bd}{t} \right) \\
F^2(t,b,t,d,t,f) &= \left(\frac{d^2-fb}{t}, b', \frac{f^2-bd}{t}, d', \frac{b^2-df}{t}, f' \right)
\end{align*}
where for instance
\begin{align*}
d' &= \frac{\left(\frac{d^2-fb}{t}\right)^2 - \frac{f^2-bd}{t}\frac{b^2-df}{t}}{d} \\
&= \frac{d^4 - 3bd^2f  + b^3d + df^3}{dt^2} = \frac{b^3 + d^3 + f^3 - 3bdf}{t^2}
\end{align*}
This expression is symmetric in $b,d,f$, so by the symmetries of the system we must have $d'=f'=b'$.  Therefore $F^2(t,b,t,d,t,f) \in V$.  So $F^2(U) \subseteq V$, and a similar argument shows $F^{-2}(V) \subseteq U$.

Lastly, let $x=(1,1,1,2,1,3) \in U$.  Then
\begin{align*}
F(x) &= (2,-5,3,1,1,7) \\
F^2(x) &= (1,18,7,18,-5,18) = y.
\end{align*}
There are no zeros in this calculation, so no singularities of $F$ or $F^{-1}$ occur.  Hence $F^2$ is defined at $x$ and $F^{-2}$ is defined at $y$.
\end{proof}

\section{Periodic matrices} \label{secMatrices}
In the next section, we will prove that the octahedron recurrence exhibits the Devron property.  This recurrence has the property that the values computed are given by larger and larger minors of some infinite matrix.  We will focus on the case when the entries of the matrix are periodic.  To understand when singularities occur, it is necessary to understand when periodicity forces a minor to equal zero.  This question is the subject of the current section.

For our purposes, a (two dimensional) \emph{lattice} is a finite index subgroup of $\mathbb{Z}^2$.  If $\Lambda$ is a lattice then it can be expressed as the integer span of two linearly independent integer vectors $(a,b)$ and $(c,d)$.  In this case write $\Lambda = \lattice{a}{b}{c}{d}$.  The determinant of $\Lambda$ is defined to be
\begin{displaymath}
\det(\Lambda) = |\mathbb{Z}^2/\Lambda|
\end{displaymath}
and one can check that $\det(\Lambda) = |ad-bc|$ for $a,b,c,d$ as before.

Say that an infinite matrix $A=(a_{i,j})_{i,j\in\mathbb{Z}}$ is \emph{$\Lambda$-periodic} if $a_{i,j} = a_{i',j'}$ whenever $(i'-i,j'-j) \in \Lambda$.  Let $Mat(\Lambda)$ denote the space of complex, $\Lambda$-periodic matrices.  Note that
\begin{displaymath}
Mat(\Lambda) \cong \mathbb{C}^r
\end{displaymath}
where $r = \det(\Lambda)$.  The same definitions can be made for finite matrices, so let $Mat(\Lambda, m, n)$ denote the space of complex, $\Lambda$-periodic $m\times n$ matrices. 

\begin{prop}
Let $\Lambda \subseteq \mathbb{Z}^2$ be a lattice.  Let $M$ be the minimal positive integer such that either $(M,0) \in \Lambda$ or $(0,M) \in \Lambda$.  Then the consecutive $(M+1)\times(M+1)$ minors of any $\Lambda$-periodic matrix vanish.
\label{propDetEq0}
\end{prop}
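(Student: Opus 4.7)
The plan is to show that any consecutive $(M+1)\times(M+1)$ submatrix of a $\Lambda$-periodic matrix contains two identical rows or two identical columns, which forces its determinant to be zero. The hypothesis is perfectly tailored to this: we are handed one of the two axis-aligned vectors $(M,0)$ or $(0,M)$ inside $\Lambda$, and each possibility produces one of the two desired equalities.

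I would first invoke the symmetry that swaps the two coordinates of $\mathbb{Z}^2$ — this exchanges row-periodicity with column-periodicity and leaves determinants unchanged up to sign — in order to reduce to the case $(M,0) \in \Lambda$. Then, for any $\Lambda$-periodic matrix $A = (a_{i,j})$ and any consecutive $(M+1) \times (M+1)$ submatrix, with rows indexed by $i_0, i_0+1, \ldots, i_0+M$ and columns indexed by $j_0, j_0+1, \ldots, j_0+M$, I would invoke $\Lambda$-periodicity to conclude $a_{i_0+M,\,j_0+k} = a_{i_0,\,j_0+k}$ for each $k = 0, 1, \ldots, M$. The top and bottom rows of the submatrix therefore coincide, and the minor vanishes.

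There is essentially no obstacle here beyond unwinding the definitions. Notably, the minimality of $M$ in the hypothesis is not actually used in the vanishing argument itself; it would only enter if one wanted a companion converse statement to the effect that smaller consecutive minors can be generically nonzero. I expect the minimality of $M$ to become relevant later in Section~\ref{secOct}, where the bound must be sharp in order for the singularity analysis of the octahedron recurrence to produce the predicted distance between layers of zeros.
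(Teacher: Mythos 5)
Your argument is correct and is essentially the paper's own proof: reduce by symmetry to the case $(M,0) \in \Lambda$, then observe that periodicity forces the first and last rows of any consecutive $(M+1)\times(M+1)$ submatrix to coincide. Your side remark that minimality of $M$ is not needed for the vanishing statement itself is also accurate; it is used only for the companion nonvanishing result (Proposition \ref{propDetNeq0}).
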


\begin{proof}
Let $A \in Mat(\Lambda)$, and suppose without loss of generality that $(M,0) \in \Lambda$.  Then $a_{i+M,j} = a_{i,j}$ for any $i,j \in \mathbb{Z}$.  Therefore, any consecutive $(M+1)\times(M+1)$ submatrix of $A$ has the same first and last row and is hence degenerate.
\end{proof}

Next, we show that there exists a $\Lambda$-periodic matrix with a consecutive $M\times M$ minor that is nonzero.  Again, assume that $(M,0) \in \Lambda$ but that $(i,0) \notin \Lambda$ and $(0,j) \notin \Lambda$ for $0 < |i| < M$ and $0 < |j| < M$.  Choose an element $(k,n)$ of $\Lambda$ with $n>0$ minimal.  For any such choice we have
\begin{displaymath}
\Lambda = \lattice{M}{0}{k}{n}.
\end{displaymath}
If necessary, we can add a multiple of $(M,0)$ to $(k,n)$ to ensure that $0 \leq k < M$.

Assume for the following construction that $M$ is divisible by $k$.  In this case
\begin{displaymath}
\left(0,\frac{Mn}{k}\right) = -(M,0) + \frac{M}{k}(k,n) \in \Lambda
\end{displaymath}
so by assumption $Mn/k \geq M$ and $n \geq k$.

Define $v:\{1,2,\ldots, M\} \to \{1,2,\ldots, k\}$ so that
\begin{displaymath}
i \equiv v(i) \pmod{k}
\end{displaymath}
for all $i$.  Let $i_1,i_2,\ldots,i_M$ be the permutation of $1,2,\ldots, M$ obtained by listing the elements of $v^{-1}(1)$ in increasing order, then the elements of $v^{-1}(2)$ in increasing order, and so on.  

Similarly, define $w:\{1,2,\ldots, M\} \to \{1,2,\ldots, n\}$ so that
\begin{displaymath}
j \equiv w(j) \pmod{n} 
\end{displaymath}
for all $j$.  Let $j_1,j_2,\ldots,j_M$ be the permutation of $1,2,\ldots, M$ obtained by listing the elements of $w^{-1}(1)$ in increasing order, then the elements of $w^{-1}(2)$ in increasing order, and so on.  For example, if $M = 20$, $k=5$ and $n = 9$ then we have
\begin{displaymath}
\begin{array}{|c|cccccccccccccccccccc|}
\hline
t & 1 & 2 & 3 & 4 & 5 & 6 & 7 & 8 & 9 & 10 & 11 & 12 & 13 & 14 & 15 & 16 & 17 & 18 & 19 & 20 \\
\hline
i_t & 1 & 6 & 11 & \appendbar{16} & 2 & 7 & 12 & \appendbar{17} & 3 & 8 & 13 & \appendbar{18} & 4 & 9 & 14 & \appendbar{19} & 5 & 10 & 15 & 20 \\
\hline
j_t & 1 & 10 & \appendbar{19} & 2 & 11 & \appendbar{20} & 3 & \appendbar{12} & 4 & \appendbar{13} & 5 & \appendbar{14} & 6 & \appendbar{15} & 7 & \appendbar{16} & 8 & \appendbar{17} & 9 & 18 \\
\hline
\end{array}
\end{displaymath}

Divide the interval $t=1,2,\ldots, M$ into \emph{verses} based on the value of $v(i_t)$ and into \emph{measures} based on the value of $w(j_t)$.  The divisions in the second row of the preceding table indicate the breaks between verses and the divisions in the third row indicate the breaks between measures.

There are $k$ verses each of size $M/k$.  More precisely, verse $v$ goes from times $t=(M/k)(v-1)+1$ to $t=(M/k)v$ and has values of $i_t$ given by $v,v+k,v+2k,\ldots, v+M-k$.  Going backwards, if $v$ and $i_t$ are known, then $t$ can be computed as
\begin{equation} \label{viTot}
t = \frac{i_t-v}{k} + \frac{M}{k}(v-1) + 1
\end{equation}
a formula that will come in handy later.

There are $n$ measures but they will not all be the same size unless $M$ is divisible by $n$.  In general, the first several will have size $\lceil M/n \rceil$ and the remainder will have size $\lfloor M/n \rfloor$.  Since $n \geq k$, each measure is at most as long as each verse.  As such, each measure is either contained in a verse or is split between two consecutive verses.  In the above example, the second measure ($t=4,5,6$) is the only one that is split between two verses, namely the first verse $(t=1,2,3,4)$ and the second verse $(t=5,6,7,8)$.

Call $(i_1,j_1), \ldots, (i_M,j_M)$ the \emph{melody}.  The \emph{harmony} consists of certain $(i_t \pm 1, j_t)$ according to the following rules:
\begin{itemize}
\item If the measure containing $t$ is split between two verses and $t$ is in the first such verse, then the harmony at time $t$ is $(i_t+1,j_t)$.
\item If the measure containing $t$ is split between two verses and $t$ is in the second such verse, then the harmony at time $t$ is $(i_t-1,j_t)$.
\item If the measure containing $t$ is confined to a single verse then there is no harmony at time $t$.
\end{itemize}
Let $S$ be the set of $(i,j)$  that are part of either the melody or the harmony.  Let $A_{\Lambda}$ be the $M\times M$ matrix whose $(i,j)$-entry is 1 if $(i,j) \in S$ and 0 otherwise.  

In the running example $M=20, k=5,n=9$ there is only harmony during measure 2 ($t=4,5,6$), since that is the only measure split between two verses.  Since $t=4$ is in the earlier verse, the harmony for $(i_4,j_4)= (16,2)$ is $(17,2)$.  Since $t=6,7$ are in the later verse, the harmony for $(i_5,j_5)=(2,11)$ and $(i_6,j_6)= (7,20)$ are $(1,11)$ and $(6,20)$ respectively.  These three points, together with $(i_1,j_1) \ldots (i_{20},j_{20})$ from the table give the locations of the nonzero entries of $A_{\Lambda}$:
\begin{displaymath}
\left(
\begin{array}{cccccccccccccccccccc}
1 & 0 & 0 & 0 & 0 & 0 & 0 & 0 & 0 & 0 & 1' & 0 & 0 & 0 & 0 & 0 & 0 & 0 & 0 & 0 \\
0 & 0 & 0 & 0 & 0 & 0 & 0 & 0 & 0 & 0 & 1 & 0 & 0 & 0 & 0 & 0 & 0 & 0 & 0 & 0 \\
0 & 0 & 0 & 1 & 0 & 0 & 0 & 0 & 0 & 0 & 0 & 0 & 0 & 0 & 0 & 0 & 0 & 0 & 0 & 0 \\
0 & 0 & 0 & 0 & 0 & 1 & 0 & 0 & 0 & 0 & 0 & 0 & 0 & 0 & 0 & 0 & 0 & 0 & 0 & 0 \\
0 & 0 & 0 & 0 & 0 & 0 & 0 & 1 & 0 & 0 & 0 & 0 & 0 & 0 & 0 & 0 & 0 & 0 & 0 & 0 \\
0 & 0 & 0 & 0 & 0 & 0 & 0 & 0 & 0 & 1 & 0 & 0 & 0 & 0 & 0 & 0 & 0 & 0 & 0 & 1' \\
0 & 0 & 0 & 0 & 0 & 0 & 0 & 0 & 0 & 0 & 0 & 0 & 0 & 0 & 0 & 0 & 0 & 0 & 0 & 1 \\
0 & 0 & 0 & 0 & 0 & 0 & 0 & 0 & 0 & 0 & 0 & 0 & 1 & 0 & 0 & 0 & 0 & 0 & 0 & 0 \\
0 & 0 & 0 & 0 & 0 & 0 & 0 & 0 & 0 & 0 & 0 & 0 & 0 & 0 & 1 & 0 & 0 & 0 & 0 & 0 \\
0 & 0 & 0 & 0 & 0 & 0 & 0 & 0 & 0 & 0 & 0 & 0 & 0 & 0 & 0 & 0 & 1 & 0 & 0 & 0 \\
0 & 0 & 0 & 0 & 0 & 0 & 0 & 0 & 0 & 0 & 0 & 0 & 0 & 0 & 0 & 0 & 0 & 0 & 1 & 0 \\
0 & 0 & 1 & 0 & 0 & 0 & 0 & 0 & 0 & 0 & 0 & 0 & 0 & 0 & 0 & 0 & 0 & 0 & 0 & 0 \\
0 & 0 & 0 & 0 & 1 & 0 & 0 & 0 & 0 & 0 & 0 & 0 & 0 & 0 & 0 & 0 & 0 & 0 & 0 & 0 \\
0 & 0 & 0 & 0 & 0 & 0 & 1 & 0 & 0 & 0 & 0 & 0 & 0 & 0 & 0 & 0 & 0 & 0 & 0 & 0 \\
0 & 0 & 0 & 0 & 0 & 0 & 0 & 0 & 1 & 0 & 0 & 0 & 0 & 0 & 0 & 0 & 0 & 0 & 0 & 0 \\
0 & 1 & 0 & 0 & 0 & 0 & 0 & 0 & 0 & 0 & 0 & 0 & 0 & 0 & 0 & 0 & 0 & 0 & 0 & 0 \\
0 & 1' & 0 & 0 & 0 & 0 & 0 & 0 & 0 & 0 & 0 & 1 & 0 & 0 & 0 & 0 & 0 & 0 & 0 & 0 \\
0 & 0 & 0 & 0 & 0 & 0 & 0 & 0 & 0 & 0 & 0 & 0 & 0 & 1 & 0 & 0 & 0 & 0 & 0 & 0 \\
0 & 0 & 0 & 0 & 0 & 0 & 0 & 0 & 0 & 0 & 0 & 0 & 0 & 0 & 0 & 1 & 0 & 0 & 0 & 0 \\
0 & 0 & 0 & 0 & 0 & 0 & 0 & 0 & 0 & 0 & 0 & 0 & 0 & 0 & 0 & 0 & 0 & 1 & 0 & 0 \\
\end{array}
\right)
\end{displaymath}
The entries marked $1'$ indicate contributions of the harmony.

\begin{prop}
$A_{\Lambda}$ is $\Lambda$-periodic.
\end{prop}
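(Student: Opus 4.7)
First I would introduce coordinates on $[1,M]^2$ adapted to the construction: write $i = v(i) + (p(i)-1)k$ with $v(i) \in [1,k]$ and $p(i) \in [1,M/k]$, and $j = w(j) + (q(j)-1)n$ with $w(j) \in [1,n]$ and $q(j) \in [1, c_{w(j)}]$, where $c_{w(j)}$ is the size of measure $w(j)$. The plan is then to identify a complete invariant for $\Lambda$-equivalence on $[1,M]^2$ and describe $S$ purely in terms of it, forcing $S$ to be a union of $\Lambda$-orbits. Since both generators $(M,0)$ and $(k,n)$ of $\Lambda$ lie in $k\mathbb{Z} \times n\mathbb{Z}$ (using $k \mid M$), the pair $(v(i), w(j))$ is $\Lambda$-invariant. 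Solving $(\alpha k, \beta n) = \gamma(M,0) + \delta(k,n)$ gives $\delta = \beta$ and $\gamma = (\alpha-\beta)k/M$, an integer precisely when $\alpha \equiv \beta \pmod{M/k}$. Hence $d(i,j) := (p(i) - q(j)) \bmod M/k$ is a third $\Lambda$-invariant, and together $(v(i), w(j), d(i,j))$ completely classifies the $\Lambda$-orbits in $[1,M]^2$.

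Next I would establish the characterization: $(i,j) \in S$ if and only if (a)~$d(i,j) \equiv \operatorname{start}(w(j)) - 1 \pmod{M/k}$, where $\operatorname{start}(w)$ is the first time index in measure $w$, and (b)~some time $t$ lies in both verse $v(i)$ and measure $w(j)$. Both conditions depend only on the invariant triple, so this characterization immediately implies that $A_\Lambda$ is $\Lambda$-periodic. For the forward implication, the melody point $(i_t, j_t)$ satisfies $p(i_t) - q(j_t) = \operatorname{start}(w) - 1 - (v-1)M/k$, giving (a), while $t$ itself witnesses (b); the two harmony cases follow because shifting $i_t$ by $\pm 1$ moves it to the adjacent verse while leaving $p$ unchanged, and split measures overlap both consecutive verses by construction.

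For the converse I would set $t := \operatorname{start}(w(j)) + q(j) - 1$, which is a time in measure $w(j)$, and let $v_t$ denote its verse. Condition (a) combined with the uniqueness of $p \in [1,M/k]$ in a given residue class mod $M/k$ forces $p(i) = t - (v_t - 1)M/k$, which matches the position of $i_t$ in verse $v_t$. Then either $v(i) = v_t$, yielding $(i,j) = (i_t, j_t)$, or $v(i) \ne v_t$; in the latter case, condition (b) provides another time in measure $w(j)$ lying in verse $v(i)$, so measure $w(j)$ spans both $v(i)$ and $v_t$. The paper's setup with $n \ge k$ already establishes that each measure spans at most two consecutive verses, so $v(i) = v_t \pm 1$ with the measure split accordingly, and a direct check identifies $(i,j) = (i_t \pm 1, j_t)$ as exactly the harmony point at $t$. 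The main obstacle is this case analysis, where I must confirm that the consecutive-verse configuration forced by (b) coincides with the configuration under which the original harmony construction produces the required point.
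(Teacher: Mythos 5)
Your proposal is correct, and it takes a genuinely different route from the paper's proof. The paper argues directly: successive melody notes within a single measure differ by the generator $(k,n)$ (corrected by $-(M,0)$ when wrapping past the bottom of the matrix), so a confined measure's notes form exactly one $\Lambda$-coset, while in a split measure the harmony notes precisely complete the two partial cosets created by the verse boundary, yielding two full cosets; hence $S$ is a union of cosets. You instead construct the explicit classifying invariant $(v(i),w(j),d(i,j))$ with $d=(p-q)\bmod (M/k)$ --- observing that $\Lambda\subseteq k\mathbb{Z}\times n\mathbb{Z}$ with the quotient $\mathbb{Z}/(M/k)$ detected by $p-q$ --- and then characterize membership in $S$ by the two conditions (a) and (b) stated purely in terms of this triple, from which $\Lambda$-periodicity is immediate. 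The ``direct check'' you flag as the main obstacle does go through: in the converse, if $v(i)=v_t+1$ then $t$ is in the earlier of the two verses spanned by measure $w(j)$, so the harmony rule produces $(i_t+1,j_t)=(i,j)$, and symmetrically for $v(i)=v_t-1$; if $v(i)=v_t$ one gets the melody note. The tradeoff: the paper's argument is shorter and tracks the construction of $S$ closely, while yours costs a bit more setup but produces a reusable coset-classifying invariant and a clean membership criterion for $S$ --- the kind of thing that could also be leveraged in the nondegeneracy argument that follows. One small remark: the claimed \emph{completeness} of the invariant $(v,w,d)$ (which you verify by counting $k\cdot n\cdot(M/k)=Mn=\det\Lambda$, or by solving directly) is a nice sanity check but is not strictly needed; invariance alone, together with your exact characterization of $S$, already forces $S$ to be a union of $\Lambda$-cosets.
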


\begin{proof}
Recall $\Lambda = \lattice{M}{0}{k}{n}$.  As such, if $(i,j) - (i',j') \in \Lambda$ then $j \equiv j' \pmod{n}$.  Conversely, if $j \equiv j' \pmod{n}$ then there is a unique $i' \in \{1,2,\ldots, M\}$ such that $(i,j) - (i',j') \in \Lambda$.  A coset of $\Lambda$ restricted to $\{1,2,\ldots, M\} \times \{1,2,\ldots, M\}$ can be constructed as follows.  Begin with some point $(i,j)$ in the first $n$ columns and successively add $(k,n)$ until a point on one of the last $n$ columns is reached.  If any step goes past the bottom of the matrix, subtract $(M,0)$ to return to the matrix.

If a measure is confined to a single verse, $(k,n)$ is added to get from each note to the next.  The notes in the measure are exactly a coset as described above.  Now suppose a given measure is split between two verses $v$ and $v+1$.  The notes $(i_t,j_t)$ in the two halves have different residues of $i_t$ modulo $k$, namely $v$ and $v+1$, so they cannot be in the same coset.  The rules for the harmony are designed such that the harmony for the second half of the measure completes the coset containing the melody from the first half, and vice versa.  Together, these notes then comprise two $\Lambda$-cosets.  Put together $S$ is a union of (restrictions to the matrix) of $\Lambda$-cosets, so $A_{\Lambda}$ is $\Lambda$-periodic as desired.
\end{proof}

To illustrate the proof, consider again the example $M=20,k=5,n=9$.  The first measure is contained in the first verse and has notes $(1,1),(6,10),(11,19)$.  These are all related by the vector $(5,9) \in \Lambda$ and no other locations in the matrix lie in the same $\Lambda$-coset.  The second measure is split between the first and second verses.  The melody of this measure is $(16,2), (2,11), (7,20)$, of which only the last two are related by $\Lambda$.  The remainder of the coset of $(16,2)$ consists of $(16,2) + (5,9) - (20,0) = (1,11)$ and $(1,11)+(5,9)=(6,20)$.  These two notes are exactly the harmony for $(2,11)$ and $(7,20)$ as discussed above.  Similarly, the missing element of the coset containing $(2,11)$ and $(7,20)$ is $(2,11) - (5,9) + (20,0) = (17,2)$ which is the harmony for $(16,2)$.

\begin{prop}
$\det(A_{\Lambda}) \neq 0$
\end{prop}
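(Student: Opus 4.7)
My plan is to reorder the rows and columns of $A_{\Lambda}$ using the melody permutations so the matrix takes the form $I + N$ for a nilpotent $N$; this will give $\det(A_{\Lambda}) = \pm 1 \neq 0$.  Let $B$ be the $M \times M$ matrix defined by $B_{t,t'} = (A_{\Lambda})_{i_t, j_{t'}}$. Since $t \mapsto i_t$ and $t' \mapsto j_{t'}$ are both permutations of $\{1,\ldots,M\}$, $\det B$ and $\det A_{\Lambda}$ differ only by a sign.  The melody condition gives $B_{t,t}=1$ for every $t$, so $B = I + N$, where $N$ records the contributions of the harmony.

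The next step is to locate the entries of $N$ using formula \eqref{viTot}.  A harmony entry at $(i_t+1,j_t)$ of $A_{\Lambda}$ (with $t$ in the first half of a split measure) sits at $B_{t'',t}$, where $t''$ is determined by $i_{t''}=i_t+1$; a short computation using \eqref{viTot} with $v(i_{t''}) = v(i_t)+1$ gives $t''=t+M/k$.  Similarly a harmony at $(i_t-1,j_t)$ (with $t$ in the second half) gives $B_{t-M/k,\,t}=1$.  Thus every nonzero entry of $N$ lies in a column $t$ belonging to some split measure and is placed at row $t\pm M/k$, with the sign determined by which half of the split measure contains $t$.  In particular each column of $N$ has at most one nonzero entry.

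I then interpret $N$ as the adjacency matrix of a directed graph $G$ on $\{1,\ldots,M\}$ in which each vertex $t$ has at most one outgoing edge, going to $t+M/k$ or $t-M/k$ as above.  If $G$ is acyclic then $N$ is nilpotent and $\det(I+N) = 1$, whence $\det(A_{\Lambda}) = \pm 1$.  To rule out cycles, suppose $t_1 \to t_2 \to \cdots \to t_p \to t_1$ is a simple directed cycle, let $\epsilon_i \in \{\pm 1\}$ be the sign of the $i$th step, and set $S_k = \epsilon_1 + \cdots + \epsilon_k$, so that $t_{k+1}=t_1+S_k(M/k)$.  Distinctness of the $t_i$ forces $S_0,\ldots,S_{p-1}$ to be distinct integers with $S_p=0$ and consecutive differences $\pm 1$; a short induction (WLOG $S_1=1$, after which each $S_k = k$ is forced by distinctness) shows this is possible only when $p=2$.

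The remaining case $p=2$ is handled separately: a $2$-cycle would require $t_1$ and $t_2 = t_1 + M/k$ to lie in the same split measure, since each verse boundary is straddled by at most one measure.  But each measure consists of at most $\lceil M/n \rceil$ consecutive values of $t$, and the bound $\lceil M/n \rceil \leq M/k$ (using $n \geq k$ and that $M/k$ is an integer) means a single measure cannot span a gap of $M/k$.  Hence $G$ is acyclic, $N$ is nilpotent, and $\det(A_{\Lambda}) \neq 0$.  The main step needing care is the cycle analysis, where the partial-sum argument must be combined with the measure-size bound; everything else is essentially bookkeeping with \eqref{viTot}.
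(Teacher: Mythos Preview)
Your proof is correct and follows essentially the same approach as the paper: both arguments show that the melody permutation contributes the unique nonzero term to $\det(A_\Lambda)$ by reducing to the impossibility of the configuration where $t$ and $t' = t + M/k$ lie in the same split measure, and both derive the same contradiction from the measure-length bound $\lceil M/n\rceil \le M/k$. The main difference is packaging: the paper asserts informally (``it is easy to see'') that any competing permutation must contain a down--up swap in two consecutive rows, whereas your reordering to $B = I + N$ and the partial-sum argument on cycle lengths make this reduction fully explicit.
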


\begin{proof}
First, note that entries $(i_1,j_1),\ldots, (i_M,j_M)$ of $A$ all equal one and $i_1,\ldots, i_M$ and $j_1,\ldots, j_M$ are permutations of $1,\ldots, M$.  This gives a term of $\det(A_{\Lambda})$ equal to $\pm 1$.  The claim is that all other terms of the determinant are zero.

Indeed, $A_{\Lambda}$ is a zero-one matrix in which every column has either a single one or two ones in neighboring rows.  To obtain another nonzero term, it would be necessary to replace some of the ones from the original term with the other ones in the same columns in such a manner that there is still just a single one taken from each row.  It is easy to see that for any such maneuver, there are two consecutive rows where the one in the top row is moved down and the one in the bottom row is moved up.  We will show that no such configuration of four ones occurs in $A_{\Lambda}$.

Indeed, suppose for the sake of contradiction that such a configuration exists.  Let $t,t'$ be such that $i_t$ and $i_{t'}=i_t+1$ are the consecutive rows in question.  The ones in these rows coming from the melody are located at $(i_t,j_t)$ and $(i_{t'},j_{t'})$.  By assumption, there are also ones at $(i_t+1,j_t)$ and $(i_{t'}-1,j_{t'})$, so these must be the harmony for the other two notes.  Let $v$ be the verse containing $t$ and $v'$ the verse containing $t'$.  Since $i_{t'} = i_t+1$ we have $v' \equiv v+1 \pmod{k}$.  Since the harmony of $(i_t,j_t)$ is $(i_t+1,j_t)$, the measure containing $t$ must be split between verses $v$ and $v+1$.  In particular, $v < k$ so $v' = v+1$.  The harmony of $(i_{t'},j_{t'})$ is $(i_{t'}-1,j_t')$ so the measure containing $t'$ must also be split between verses $v'-1=v$ and $v'=v+1$.  Only a single measure can split two given verses, so $t$ and $t'$ are in the same measure.  

Combining $i_{t'} = i_t+1$ and $v' = v+1$ with \eqref{viTot} yields
\begin{displaymath}
t' = t+\frac{M}{k}
\end{displaymath}
so the measure in question has size at least $M/k + 1$.  This contradicts the fact that measures are at most as long as verses, each of which have size $M/k$.  So in fact the determinant has just the one nonzero term, hence $\det(A_{\Lambda}) = \pm 1$.
\end{proof}

The next step is to eliminate the need for the assumption that $k$ divides $M$.

\begin{lem} \label{lemMultiplier}
Let $k$ and $M$ be integers, $d=\gcd(k,M)$.  Then there exists $r \in \mathbb{Z}$ such that $rk \equiv d \pmod{M}$ and $\gcd(r,M)=1$.
\end{lem}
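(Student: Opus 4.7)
The plan is to reduce the congruence to inverting a unit modulo $M/d$ and then use the Chinese Remainder Theorem to choose a representative that is coprime to $M$. Writing $k = dk'$ and $M = dM'$, the defining property $d = \gcd(k,M)$ gives $\gcd(k',M') = 1$, and the congruence $rk \equiv d \pmod{M}$ is equivalent to $rk' \equiv 1 \pmod{M'}$. Thus the first requirement pins $r$ down to a single residue class $r_0 \pmod{M'}$, where $r_0$ is the multiplicative inverse of $k'$ modulo $M'$. Note that such an $r_0$ automatically satisfies $\gcd(r_0, M') = 1$.

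It remains to pick $r$ in the class $r_0 + M'\mathbb{Z}$ with $\gcd(r,M) = 1$. The key observation is that primes dividing $M$ come in two flavors: those dividing $M'$ (which are already handled by $\gcd(r_0,M')=1$) and those not dividing $M'$ (which necessarily divide $d$, since $M = dM'$). Let $P$ be the product of the distinct primes of the second type. Since $\gcd(P,M') = 1$, the Chinese Remainder Theorem produces an $r$ with $r \equiv r_0 \pmod{M'}$ and $r \equiv 1 \pmod{P}$.

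To verify coprimality, fix any prime $p \mid M$. If $p \mid M'$, then $r \equiv r_0 \pmod{p}$ combined with $\gcd(r_0, M') = 1$ gives $p \nmid r$; otherwise $p \mid P$, so $r \equiv 1 \pmod{p}$ again gives $p \nmid r$. Hence $\gcd(r,M) = 1$, and the $r$ so constructed meets both requirements. No step in the argument is particularly delicate; the only thing worth highlighting is the dichotomy for prime divisors of $M$, which follows immediately from the definition of $P$.
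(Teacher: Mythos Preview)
Your proof is correct and follows essentially the same approach as the paper: both start with an initial solution $r_0$ to the congruence that is automatically coprime to $M/d$, then adjust it by a multiple of $M/d$ (via CRT in your version, via an explicit choice of $a$ in the paper's) so that it becomes coprime to the remaining prime factors of $M$. The only cosmetic difference is that the paper factors $M=M_1M_2$ according to which primes divide $M/d$, whereas you take the squarefree product $P$ of the ``new'' primes; both devices serve the same purpose.
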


\begin{proof}
Let $r_0,s_0$ be such that $r_0k + s_0M = d$.  Then $r_0k \equiv d \pmod{M}$.  Note that
\begin{displaymath}
r_0\frac{k}{d} + s_0\frac{M}{d} = 1
\end{displaymath}
so $r_0$ and $M/d$ are relatively prime.  Write $M = M_1M_2$ where $M_2$ consists of all prime factors of $M$ that divide $M/d$.  Then $\gcd(M/d,M_1)=1$ so there exists $a\in \mathbb{Z}$ such that $aM/d \equiv 1-r_0 \pmod{M_1}$.  Let $r=r_0+aM/d$.  Then $r \equiv 1 \pmod{M_1}$ so $r$ and $M_1$ are relatively prime.  We still have $r$ and $M/d$ are relatively prime, so $r$ and $M_2$ are relatively prime (since each prime factor of $M_2$ divides $M/d$).  Therefore $\gcd(r,M) = 1$.  Finally
\begin{displaymath}
rk = r_0k + \left(a\frac{k}{d}\right)M \equiv r_0k \equiv d \pmod{M}
\end{displaymath}
as desired.
\end{proof}

\begin{prop}
Let $\Lambda$ be a lattice and let $M>0$ be minimal with respect to the property that $(M,0) \in \Lambda$ or $(0,M) \in \Lambda$.  Then there exists a $\Lambda$-periodic $M\times M$ matrix with nonzero determinant.
\label{propDetNeq0}
\end{prop}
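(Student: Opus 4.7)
The plan is to reduce to the case $k \mid M$ already handled, by constructing $A_\Lambda$ as a row-permutation of the matrix built for a related but easier lattice. Write $\Lambda = \langle (M,0),(k,n) \rangle$ with $n > 0$ minimal and $0 \le k < M$; if $k = 0$ then $\Lambda = M\mathbb{Z} \times n\mathbb{Z}$ with $n \ge M$, and the $M\times M$ identity matrix is $\Lambda$-periodic and nonsingular, so assume $0 < k < M$. Set $d = \gcd(M,k)$ and let $\Lambda' = \langle (M,0),(d,n) \rangle$. Since $d \mid M$, applying the construction from earlier in the section to $\Lambda'$ produces an $M \times M$ matrix $A_{\Lambda'}$ that is $\Lambda'$-periodic with $\det A_{\Lambda'} = \pm 1$.

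Before invoking that construction I would verify its hypotheses for $\Lambda'$: that $M$ is minimal positive with $(M,0) \in \Lambda'$ or $(0,M) \in \Lambda'$, and that $(d,n)$ is a valid second generator. A direct computation shows the smallest positive integer $N$ with $(0,N) \in \Lambda'$ is $Mn/d$, so the required inequality $Mn/d \ge M$ (equivalently $n \ge d$) follows from the minimality of $M$ in $\Lambda$, because $(0, Mn/d) \in \Lambda$ as well. The remaining conditions ($n$ minimal positive for $\Lambda'$, $0 < d < M$, and $d \mid M$) are immediate.

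Next I would invoke Lemma~\ref{lemMultiplier} to choose $r \in \mathbb{Z}$ with $rk \equiv d \pmod{M}$ and $\gcd(r,M)=1$, and define the permutation $\sigma$ of $\{1,\ldots,M\}$ by $\sigma(i) \equiv ri \pmod{M}$. Setting $A(i,j) = A_{\Lambda'}(\sigma(i),j)$ gives a matrix differing from $A_{\Lambda'}$ by a row permutation, so $\det A = \pm \det A_{\Lambda'} \neq 0$. Periodicity under $(M,0)$ is automatic, and
\begin{displaymath}
A(i+k,\,j+n) = A_{\Lambda'}\bigl(\sigma(i)+d,\,j+n\bigr) = A_{\Lambda'}(\sigma(i),j) = A(i,j)
\end{displaymath}
using $\Lambda'$-periodicity of $A_{\Lambda'}$, so $A$ is $\Lambda$-periodic as required. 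The conceptual heart of the argument is that Lemma~\ref{lemMultiplier} delivers $r$ with exactly the two properties needed: coprimality to $M$, so that $\sigma$ is a permutation and the determinant is preserved; and the congruence $rk \equiv d \pmod{M}$, so that the row permutation intertwines $\Lambda$-periodicity with $\Lambda'$-periodicity.
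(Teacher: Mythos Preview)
Your proof is correct and follows essentially the same route as the paper's: reduce to the divisible case via the auxiliary lattice $\Lambda' = \langle (M,0),(d,n)\rangle$, invoke Lemma~\ref{lemMultiplier} to obtain $r$, and define the desired matrix as the row permutation $i \mapsto ri \pmod{M}$ of $A_{\Lambda'}$. You are in fact more careful than the paper in two places: you treat $k=0$ separately (where the identity matrix suffices), and you explicitly verify that $\Lambda'$ satisfies the hypotheses needed for the earlier construction, in particular that $n \ge d$ via $(0,Mn/d) \in \Lambda$.
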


\begin{proof}
Suppose without loss of generality that $(M,0) \in \Lambda$.  As before, suppose $(k,n) \in \Lambda$ with $n>0$ minimal and $0 \leq k < M$.  Then $\Lambda = \lattice{M}{0}{k}{n}$.  Let $d = \gcd(k,M)$ and consider instead the lattice
\begin{displaymath}
\Lambda' = \lattice{M}{0}{d}{n}.
\end{displaymath}

Since $d | M$ the previous construction produces a nondegenerate $M\times M$ matrix $A = A_{\Lambda'} = (a_{ij})$ that is nondegenerate and $\Lambda'$-periodic.  As in Lemma \ref{lemMultiplier} let $r$ be such that $rk \equiv d \pmod{M}$ and $\gcd(r,M)=1$.  Define $B = (b_{ij})$ to be the $M\times M$ matrix with entries
\begin{displaymath}
b_{ij} = a_{ri,j}
\end{displaymath}
where the row index $ri$ into $A$ is understood to be taken modulo $M$.  Then $b_{i+M,j} = a_{ri+rM,j} = a_{ri,j}=b_{i,j}$ and $b_{i+k,j+n} = a_{ri+rk,j+n} = a_{ri+d,j+n} = a_{ri,j} = b_{i,j}$ for all $i$ and $j$.  Therefore $B$ is $\Lambda$-periodic.  Since $\gcd(r,M)=1$, it follows that $B$ is obtained from $A$ by a permutation of the rows.  As such, $B$ is still nondegenerate.
\end{proof}

\begin{cor} \label{corDetNeq0}
Let $\Lambda$ and $M$ be as before.  Then there exists an infinite $\Lambda$-periodic matrix all of whose consecutive $M\times M$ minors are nonzero.
\end{cor}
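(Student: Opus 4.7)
The plan is to upgrade Proposition \ref{propDetNeq0} from ``one nonzero consecutive minor exists'' to ``a $\Lambda$-periodic matrix with all consecutive $M \times M$ minors nonzero exists'' via a genericity argument on the finite-dimensional space $Mat(\Lambda) \cong \mathbb{C}^r$, where $r=\det(\Lambda)$.

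First I would observe that for each pair $(i,j) \in \mathbb{Z}^2$, the determinant $D_{i,j}$ of the consecutive $M \times M$ submatrix with top-left corner at $(i,j)$ is a polynomial function on $Mat(\Lambda)$. Since translating a $\Lambda$-periodic matrix by an element of $\Lambda$ leaves it unchanged, we have $D_{i,j} = D_{i',j'}$ as polynomials whenever $(i-i', j-j') \in \Lambda$. Hence there are at most $r=\det(\Lambda)$ distinct polynomials among the $D_{i,j}$.

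Next I would argue that none of these polynomials is identically zero. For the corner $(1,1)$ this is exactly the content of Proposition \ref{propDetNeq0}: the matrix $B$ constructed there lies in $Mat(\Lambda)$ and satisfies $D_{1,1}(B) \neq 0$. For an arbitrary starting corner $(i,j)$, translate: the shifted matrix $B'$ defined by $B'_{p,q} = B_{p-(i-1),\, q-(j-1)}$ is again $\Lambda$-periodic, and by construction $D_{i,j}(B') = D_{1,1}(B) \neq 0$. Thus every $D_{i,j}$ is a nonzero polynomial on $Mat(\Lambda)$.

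Finally I would invoke the standard fact that a finite union of proper Zariski closed subsets of an irreducible variety is still a proper subset. The vanishing locus of each $D_{i,j}$ is a proper closed subset of $Mat(\Lambda) \cong \mathbb{C}^r$, and by step one there are only finitely many distinct such loci. Their union is therefore a proper closed subset, so its complement is a nonempty Zariski open set of $\Lambda$-periodic matrices on which every consecutive $M \times M$ minor is simultaneously nonzero. Any member of this open set is the desired matrix. There is no serious obstacle here; the only thing to double-check is that the translation used to handle an arbitrary corner is well defined (the shifted matrix is automatically $\Lambda$-periodic since $\Lambda$-periodicity is translation invariant), which is immediate.
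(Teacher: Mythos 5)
Your proposal is correct and follows essentially the same route as the paper: observe that periodicity leaves only finitely many distinct consecutive $M\times M$ minors as polynomial functions on $Mat(\Lambda)\cong\mathbb{C}^r$, use Proposition \ref{propDetNeq0} (via translation) to see each is a nonzero polynomial, and intersect the finitely many dense open nonvanishing loci. The paper's proof is just a more compressed version of the same genericity argument.
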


\begin{proof}
Recall the $Mat(\Lambda) \cong \mathbb{C}^n$ where $n = \det(\Lambda)$.  Because of periodicity, there are only finitely many distinct consecutive $M \times M$ minors of such a matrix.  Each one can individually be arranged to be nonzero by Proposition \ref{propDetNeq0}.  Hence, each is nonzero for a dense open subset of $Mat(\Lambda)$.  It follows that a generic element of $Mat(\Lambda)$ has all of these minors nonzero.
\end{proof}

\section{The octahedron recurrence} \label{secOct}
The octahedron recurrence is
\begin{equation}
f_{i,j,k-1}f_{i,j,k+1} = f_{i-1,j,k}f_{i+1,j,k} + \lambda f_{i,j-1,k}f_{i,j+1,k}.
\label{eqOct}
\end{equation}
Here $\lambda$ is a nonzero constant and the $f_{i,j,k}$ are indexed by $i,j,k \in \mathbb{Z}$ with $i+j+k$ even.  The recurrence has been studied by D. Robbins and H. Rumsey \cite{RR} and D. Speyer \cite{Sp} among others.  Periodic boundary conditions, which we will eventually impose, have been considered by A. Goncharov and R. Kenyon \cite{GK} and by P. Di Francesco and R. Kedem \cite{DK}.  

The case $\lambda=-1$ of \eqref{eqOct} goes back to Dodgson's condensation method for computing determinants.
\begin{prop}[Dodgson's condensation]
Let $\lambda=-1$ and let $(f_{i,j,k})$ be a solution to \eqref{eqOct} with $f_{i,j,0}=1$ for all $i+j$ even.  Then
\begin{displaymath}
f_{i,j,k} = \det\left[ 
\begin{array}{ccccc}
f_{i-k+1,j,1} & f_{i-k+2,j+1,1} & f_{i-k+3,j+2,1} & \cdots & f_{i,j+k-1,1} \\
f_{i-k+2,j-1,1} & f_{i-k+3,j,1} & f_{i-k+4,j+1,1} & \cdots & f_{i+1,j+k-2,1} \\
f_{i-k+3,j-2,1} & f_{i-k+4,j-1,1} & f_{i-k+5,j,1} & \cdots & f_{i+2,j+k-3,1} \\
\vdots & \vdots & \vdots & \ddots & \vdots \\
f_{i,j-k+1,1} & f_{i+1,j-k+2,1} & f_{i+2,j-k+3,1} & \cdots & f_{i+k-1,j,1}\\
\end{array}
\right]
\end{displaymath}
for all $i+j+k$ even with $k \geq 1$.
\label{propDodgson}
\end{prop}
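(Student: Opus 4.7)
The plan is to combine the Desnanot--Jacobi determinant identity (a.k.a. the Lewis Carroll or Sylvester identity) with induction on $k$. That identity asserts that for any $n\times n$ matrix $M$,
$$\det(M)\cdot\det(M^{1,n}_{1,n}) = \det(M^1_1)\det(M^n_n) - \det(M^1_n)\det(M^n_1),$$
where $M^I_J$ denotes the submatrix of $M$ obtained by deleting the rows in $I$ and the columns in $J$. The sign on the right is precisely what is needed to match the octahedron recurrence (\ref{eqOct}) at $\lambda=-1$.

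I would induct on $k$, with trivial base case $k=1$ (the $1\times 1$ determinant is just $f_{i,j,1}$). For the inductive step, assume the formula for $k-1$ and $k$; let $M$ be the $(k+1)\times(k+1)$ matrix claimed to represent $f_{i,j,k+1}$. The heart of the argument is a direct identification of the five submatrices appearing in Desnanot--Jacobi with matrices from the inductive hypothesis: $M^1_1$ is the matrix computing $f_{i+1,j,k}$, $M^{k+1}_{k+1}$ computes $f_{i-1,j,k}$, $M^1_{k+1}$ computes $f_{i,j-1,k}$, $M^{k+1}_1$ computes $f_{i,j+1,k}$, and the central $(k-1)\times(k-1)$ submatrix $M^{1,k+1}_{1,k+1}$ computes $f_{i,j,k-1}$. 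Each such identification is a mechanical index-shift check against the explicit formula in the statement. The inductive step with $k=1$ uses the convention that the empty determinant equals $1$, matching the hypothesis $f_{i,j,0}=1$.

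With the submatrix identifications established, Desnanot--Jacobi gives
$$\det(M)\cdot f_{i,j,k-1} = f_{i-1,j,k}\,f_{i+1,j,k} - f_{i,j-1,k}\,f_{i,j+1,k},$$
while (\ref{eqOct}) at $\lambda=-1$ reads
$$f_{i,j,k-1}\cdot f_{i,j,k+1} = f_{i-1,j,k}\,f_{i+1,j,k} - f_{i,j-1,k}\,f_{i,j+1,k}.$$
The two right-hand sides agree, so cancelling the factor $f_{i,j,k-1}$ yields $\det(M)=f_{i,j,k+1}$, completing the induction.

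The main obstacle is the index bookkeeping in the submatrix identifications; once written out carefully it is routine, but a slip of $\pm 1$ on any coordinate would break the match with the recurrence. A minor additional point is legitimising the cancellation of $f_{i,j,k-1}$, which I would handle by treating the initial data $f_{a,b,1}$ as independent indeterminates, so that $f_{i,j,k-1}$ is a nonzero polynomial in them and the equality holds as an identity of rational functions, hence on all specialisations where both sides are defined.
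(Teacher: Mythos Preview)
Your argument is correct and is in fact the standard proof of Dodgson condensation. The paper itself does not supply a proof of this proposition; it is stated as a classical result (attributed to Dodgson and to Robbins--Rumsey~\cite{RR}) and used as a black box. Your approach via Desnanot--Jacobi plus induction is exactly the classical one, and is moreover consistent with the paper's own methods: the same Lewis Carroll identity is invoked later in Section~\ref{secY} to establish the analogous determinantal formula \eqref{eqFDet} for the $F$-polynomials. Your handling of the cancellation of $f_{i,j,k-1}$ by working over the field of rational functions in the initial data is also standard and adequate.
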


Without the assumption that the $f_{i,j,0}$ equal one, it is still possible to express arbitrary $f_{i,j,k}$ in terms of the $f_{i,j,0}$ and $f_{i,j,1}$ (see \cite{RR}).  However, the expression is more complicated and is no longer given as a determinant.  One can generalize the Dodgson result a bit by exploiting some rescalings of the underlying recurrence.

\begin{prop}
Let $(f_{i,j,k})$ be a solution to \eqref{eqOct} and let $\mu_l,\nu_l$ be nonzero scalars for $l \in \mathbb{Z}$.  Define $g_{i,j,k}=m_{i,j,k}f_{i,j,k}$ for $i+j+k$ even, $k\geq 0$ where
\begin{equation}
m_{i,j,k} = \begin{cases}
\mu_{(i-j)/2}\nu_{(i+j)/2} & k=0 \\
1,& k = 1 \\
\displaystyle\prod_{l=1}^{k-1}\frac{1}{\mu_{(i-j-k)/2+l}\nu_{(i+j-k)/2+l}}, & k \geq 2
 \\
\end{cases}
\label{eqmijk}
\end{equation}
Then the $g_{i,j,k}$ also satisfy \eqref{eqOct}.
\label{propRescale}
\end{prop}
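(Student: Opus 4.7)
The plan is to substitute $g_{i,j,k} = m_{i,j,k}\,f_{i,j,k}$ directly into \eqref{eqOct}. Since $f$ itself satisfies that recurrence with the same constant $\lambda$, the equality for $g$ will hold provided the three products
\[
m_{i,j,k-1}\,m_{i,j,k+1}, \quad m_{i-1,j,k}\,m_{i+1,j,k}, \quad m_{i,j-1,k}\,m_{i,j+1,k}
\]
all coincide for every index $(i,j,k)$ with $i+j+k$ odd and $k \geq 1$ (the range on which the $g$-recurrence is even meaningful, given that $m$ is defined only for $k \geq 0$). So the entire proposition reduces to verifying this single multiplicative identity, which plays the role of a ``gauge condition'' for the octahedron recurrence.

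Because \eqref{eqmijk} is piecewise, I would split the verification into three ranges: $k = 1$, $k = 2$, and $k \geq 3$. For $k = 1$ the middle and right-hand expressions both equal $1$, and the left reduces to $m_{i,j,0}\,m_{i,j,2}$; a direct substitution using $(i-j-2)/2 + 1 = (i-j)/2$ and the analogous identity for $\nu$ shows the sole $\mu$ and $\nu$ factors cancel, yielding $1$. The $k = 2$ case is handled similarly: the left-hand side collapses because $m_{i,j,1} = 1$, and the remaining two-factor products match up by inspection.

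For $k \geq 3$, I would expand all three products via the main branch of \eqref{eqmijk}. The central observation is that the multiset of $\mu$-indices (and, separately, the multiset of $\nu$-indices) is the same across all three expressions. Tracking the shifts, each of the three products has $\mu$-indices running over consecutive half-integers from $(i-j-k+1)/2$ to $(i-j+k-1)/2$, with the two extreme values appearing once each and all interior values appearing with multiplicity two. An analogous statement holds for the $\nu$-indices, with $i-j$ replaced by $i+j$. Once the three multisets are seen to coincide, the three products are manifestly equal.

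The main obstacle is purely bookkeeping: the half-integer shifts inside \eqref{eqmijk} must be tracked carefully, particularly at the boundaries $k = 1, 2$ where the piecewise definition forces some factors to be trivial. With the index accounting in hand, the gauge condition follows immediately from matching multisets, which completes the verification.
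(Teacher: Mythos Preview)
Your proposal is correct and follows exactly the same route as the paper: reduce to the gauge identity $m_{i,j,k-1}m_{i,j,k+1} = m_{i-1,j,k}m_{i+1,j,k} = m_{i,j-1,k}m_{i,j+1,k}$ for $i+j+k$ odd and $k\ge 1$, then verify it directly. The paper simply asserts ``a direct calculation shows'' this identity, whereas you spell out the case split $k=1,2,\ge 3$ and the multiset matching of the $\mu$- and $\nu$-indices; these details are accurate (note the indices are in fact consecutive integers rather than half-integers, since $i+j+k$ odd forces $(i-j-k+1)/2\in\mathbb{Z}$, but this is only a wording issue).
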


\begin{proof}
A direct calculation shows that 
\begin{displaymath}
m_{i-1,j,k}m_{i+1,j,k} = m_{i,j-1,k}m_{i,j+1,k} = m_{i,j,k-1}m_{i,j,k+1}
\end{displaymath}
for all $i+j+k$ odd with $k \geq 1$.  As a result, substituting the $g_{i,j,k}$ for the $f_{i,j,k}$ in \eqref{eqOct} simply amount to multiplying all three terms by a common constant.
\end{proof}

Note in the octahedron recurrence that
\begin{itemize}
\item The $f_{i,j,k}$ for two consecutive values of $k$ are independent.
\item Such $f_{i,j,k}$ can be used in conjunction with \eqref{eqOct} to determine the $f_{i,j,k}$ for the next larger $k$.
\end{itemize}
As such, we can consider the recurrence as giving rise to a dynamical system in which the state at time $t$ consists of the $f_{i,j,k}$ with $i+j+k$ even and $k = t,t+1$.  These values naturally live in an infinite matrix with entries
\begin{displaymath}
x_{i,j} = \begin{cases}
f_{i,j,t}, &  i+j+t \textrm{ even} \\
f_{i,j,t+1}, & i+j+t \textrm{ odd} \\
\end{cases}
\end{displaymath}
Applying the map replaces the $f_{i,j,t}$ in $x$ with the $f_{i,j,t+2}$ as determined by 
\begin{displaymath}
f_{i,j,t+2} = \frac{f_{i-1,j,t+1}f_{i+1,j,t+1} + \lambda f_{i,j-1,t+1}f_{i,j+1,t+1}}{f_{i,j,t}}.
\end{displaymath}
We focus on the case $\lambda=-1$ as in Dodgson condensation.  It is necessary to keep track of which half of the $x_{i,j}$ change.  Add a bit $\sigma \in \{0,1\}$ for this purpose.  We now have a map $F(x,\sigma) = (x',\sigma')$ where
\begin{align}
\sigma' &= 1-\sigma \label{eqOctsigma} \\
x_{i,j}' &= \begin{cases}
x_{i,j}, & i+j \not\equiv \sigma \pmod{2} \\
\frac{x_{i-1,j}x_{i+1,j} - x_{i,j-1}x_{i,j+1}}{x_{i,j}}, & i+j \equiv \sigma \pmod{2}\\ 
\end{cases} \label{eqOctx} 
\end{align}
This operation is invertible with inverse $F^{-1}(x,\sigma) = (x',1-\sigma)$ where 
\begin{displaymath}
x_{i,j}' = \begin{cases}
x_{i,j}, & i+j \equiv \sigma \pmod{2} \\
\frac{x_{i-1,j}x_{i+1,j} - x_{i,j-1}x_{i,j+1}}{x_{i,j}}, & i+j \not\equiv \sigma \pmod{2}\\ 
\end{cases}
\end{displaymath}

The entries $x_{i,j}$ of $x$ can be rotated 45 degrees and then decomposed into two new infinite matrices $A=A(x,\sigma)$ and $B=B(x,\sigma)$ based on the parity of $i+j$.  Do this in such a manner that $A$ consists of those $x_{i,j}$ that are about to change, i.e. those with $i+j \equiv \sigma \pmod{2}$.  So if $\sigma = 0$ then
\begin{align}
A &= \left[ 
\begin{array}{ccccc}
& \vdots & \vdots & \vdots & \\
\cdots & x_{-2,0} & x_{-1,1} & x_{0,2} & \cdots \\
\cdots & x_{-1,-1} & x_{0,0} & x_{1,1} & \cdots \\
\cdots & x_{0,-2} & x_{1,-1} & x_{2,0} & \cdots \\
& \vdots & \vdots & \vdots & \\
\end{array}
\right] \label{eqA} \\
B &= \left[ 
\begin{array}{cccccc}
& \vdots & \vdots & \vdots & \vdots &  \\
\cdots & x_{-3,0} & x_{-2,1} & x_{-1,2} & x_{0,3} & \cdots \\
\cdots & x_{-2,-1} & x_{-1,0} & x_{0,1} & x_{1,2} & \cdots \\
\cdots & x_{-1,-2} & x_{0,-1} & x_{1,0} & x_{2,1} & \cdots \\
\cdots & x_{0,-3} & x_{1,-2} & x_{2,-1} & x_{3,0} & \cdots \\
& \vdots & \vdots & \vdots & \vdots \\
\end{array}
\right] \label{eqB}
\end{align}
while if $\sigma = 1$ the roles of $A$ and $B$ are swapped.

To obtain a finite dimensional system, assume that $A$ and $B$ are periodic with respect to some lattice $\Lambda \subseteq \mathbb{Z}^2$.  Let 
\begin{equation}
X_{\Lambda} = \{(x,\sigma) : \sigma \in \{0,1\}, A(x,\sigma),B(x,\sigma) \in Mat(\Lambda)\}.
\label{eqXLambda}
\end{equation}
Symmetries of the octahedron recurrence ensure that periodicity will be preserved, so we get a map
\begin{displaymath}
F: X_{\Lambda} \to X_{\Lambda}
\end{displaymath}
given by \eqref{eqOctsigma} and \eqref{eqOctx}.  

Recall that $Mat(\Lambda) \cong \mathbb{C}^n$ where $n = \det(\Lambda)$ so $X_{\Lambda} \cong \mathbb{C}^{2n} \times \{0,1\}$.  In light of \eqref{eqA} and \eqref{eqB}, the condition that $A$ and $B$ are $\Lambda$-periodic is equivalent to requiring that $x$ be periodic with respect to the lattice 
\begin{equation}
\Lambda' = \{(i+j,-i+j) : (i,j) \in \Lambda\}.
\label{eqCompanion}
\end{equation}
We call $\Lambda'$ the \emph{companion} of $\Lambda$.  If $(x,\sigma) \in X_{\Lambda}$ then it gives rise to a solution $f_{i,j,k}$ to the octahedron recurrence such that $f_{i_2,j_2,k} = f_{i_1,j_1,k}$ whenever $(i_2-i_1,j_2-j_1) \in \Lambda'$.

\begin{ex} \label{exOct}
Let $\Lambda = \lattice{3}{0}{1}{1}$.  Then
\begin{displaymath}
\Lambda' = \lattice{3}{-3}{2}{0} = \lattice{1}{-3}{2}{0}
\end{displaymath}
A fundamental domain for $\Lambda'$ is $(1,1),(1,2),\ldots, (1,6)$ so rename the distinct entries $x_{1,1},x_{1,2},\ldots, x_{1,6}$ of $x$ with variables $a,b,c,d,e,f$.  A portion of $x$ looks like
\begin{displaymath}
\left[
\begin{array}{ccccccc}
a & b & c & d & e & f & a \\
d & e & f & a & b & c & d \\
a & b & c & d & e & f & a \\
d & e & f & a & b & c & d \\
\end{array}
\right].
\end{displaymath}
In these coordinates, $F$ looks like
\begin{align*}
F((a,b,c,d,e,f),0) &= \left(\left(\frac{d^2-fb}{a},b,\frac{f^2-bd}{c},d,\frac{b^2-df}{e},f\right),1\right) \\
F((a,b,c,d,e,f),1) &= \left(\left(a,\frac{e^2-ac}{b},c,\frac{a^2-ce}{d},e,\frac{c^2-ea}{f}\right),0\right)
\end{align*}
This agrees with \eqref{eqFirstF} except that in the earlier example reindexing was used to avoid the need for $\sigma$.  Something similar is possible for any $\Lambda$, but we find it easier to use $\sigma$ in the general case.
\end{ex}

Say that an infinite matrix $A$ has rank (at most) 1 if all of its 2-by-2 minors vanish.  The following facts are analogous to ones about finite matrices, and can be proven the same way.

\begin{lem} Let $A$ be an infinite matrix.
\begin{itemize}
\item If $A$ has rank one then its entries can be written in the form $a_{ij}=\mu_i\nu_j$ for some scalars $\mu_i$ and $\nu_j$.
\item Suppose the entries of $A$ are all nonzero.  Then $A$ has rank one if and only if its consecutive 2-by-2 minors are all nonzero.
\end{itemize}
\end{lem}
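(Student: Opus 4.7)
The first bullet is standard linear algebra adapted to the infinite case. If $A$ is the zero matrix, take $\mu_i=0$ and $\nu_j=0$ for all indices. Otherwise fix $(i_0,j_0)$ with $a_{i_0,j_0}\neq 0$. Since the $2\times 2$ minor on rows $\{i_0,i\}$ and columns $\{j_0,j\}$ vanishes for every $i,j$, we obtain
\[
a_{i_0,j_0}\, a_{i,j} = a_{i_0,j}\, a_{i,j_0},
\]
so setting $\mu_i = a_{i,j_0}/a_{i_0,j_0}$ and $\nu_j = a_{i_0,j}$ yields the desired factorization $a_{i,j}=\mu_i\nu_j$.

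For the second bullet (reading the statement as ``consecutive $2\times 2$ minors all vanish,'' which is clearly what is intended, since rank one by definition means \emph{all} $2\times 2$ minors vanish), one direction is immediate. For the converse, assume every entry of $A$ is nonzero and every consecutive minor vanishes. The condition on rows $i,i+1$ and columns $j,j+1$ reads
\[
\frac{a_{i,j}}{a_{i,j+1}} = \frac{a_{i+1,j}}{a_{i+1,j+1}},
\]
so the ratio $r_j(i) := a_{i,j}/a_{i,j+1}$ is independent of $i$. I would then telescope: for any rows $i_1,i_2$ and columns $j_1<j_2$,
\[
\frac{a_{i_1,j_1}}{a_{i_1,j_2}} = \prod_{j=j_1}^{j_2-1} r_j(i_1) = \prod_{j=j_1}^{j_2-1} r_j(i_2) = \frac{a_{i_2,j_1}}{a_{i_2,j_2}},
\]
which, after clearing denominators (allowed since all entries are nonzero), is precisely the vanishing of the $2\times 2$ minor on rows $\{i_1,i_2\}$ and columns $\{j_1,j_2\}$. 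Hence all $2\times 2$ minors vanish and $A$ has rank one.

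There is no genuine obstacle here; the only point to watch is the zero case in the first bullet and the fact that the nonvanishing-entries hypothesis in the second bullet is what lets the telescoping product be formed without worrying about $0/0$. The infinite-matrix setting introduces no new difficulty since all statements are about finite collections of entries at a time.
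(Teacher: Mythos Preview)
Your proof is correct. The paper does not actually give a proof of this lemma; it merely remarks that ``the following facts are analogous to ones about finite matrices, and can be proven the same way.'' Your argument supplies exactly those details, and you correctly diagnose the evident typo in the second bullet (``nonzero'' should read ``zero,'' i.e.\ the consecutive $2\times 2$ minors vanish). The telescoping argument via the row-independent ratios $r_j$ is the natural way to pass from consecutive minors to arbitrary ones, and your handling of the zero case in the first bullet is fine.
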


\begin{thm} \label{thmOct}
Define $U, V \subseteq X_{\Lambda}$ by
\begin{align*}
U &= \{(x,\sigma) : A(x,\sigma) \textrm{ has rank 1}\} \\
V &= \{(x,\sigma) : B(x,\sigma) \textrm{ has rank 1}\}
\end{align*}
Let $M>0$ be minimal with respect to the property $(M,0) \in \Lambda$ or $(0,M) \in \Lambda$.  Then $U,V$ is a Devron pair for $F:X_{\Lambda} \to X_{\Lambda}$ of width $M-1$.
\end{thm}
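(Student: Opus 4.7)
The plan is to reduce the theorem to Dodgson's condensation (Proposition~\ref{propDodgson}) together with the periodic-matrix results of Section~\ref{secMatrices}. Assume $(x,0) \in U$, so $A = (f_{i,j,0})$ has rank one. The factorization $f_{i,j,0} = \mu'_{(i-j)/2}\nu'_{(i+j)/2}$ lets me choose scalars in Proposition~\ref{propRescale} so that the rescaled solution $g_{i,j,k} = m_{i,j,k}f_{i,j,k}$ satisfies $g_{\cdot,\cdot,0}\equiv 1$ while $g_{\cdot,\cdot,1} = f_{\cdot,\cdot,1}$. Dodgson then expresses each $g_{i,j,k}$, $k\ge 1$, as the determinant of a consecutive $k\times k$ submatrix of the level-one matrix, which in the $45^\circ$-rotated coordinates is exactly $B$.

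Now I bring in periodicity. Because $B\in Mat(\Lambda)$, Proposition~\ref{propDetEq0} forces every consecutive $(M+1)\times(M+1)$ minor of $B$ to vanish, so $g_{\cdot,\cdot,M+1}\equiv 0$, and since the rescaling multipliers $m_{\cdot,\cdot,M+1}$ are generically nonzero, $f_{\cdot,\cdot,M+1}\equiv 0$ too. The octahedron recurrence at level $M$,
\begin{displaymath}
f_{i,j,M+1}f_{i,j,M-1} = f_{i-1,j,M}f_{i+1,j,M} - f_{i,j-1,M}f_{i,j+1,M},
\end{displaymath}
then collapses to the vanishing of every consecutive $2\times 2$ minor of the matrix $(f_{i,j,M})$. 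By Corollary~\ref{corDetNeq0} I may assume that the $M\times M$ Dodgson minors of $B$ are all nonzero, so $(f_{i,j,M})$ has no zero entry, and the lemma just before Theorem~\ref{thmOct} identifies it as rank one. Thus $B^{(M-1)} = (f_{\cdot,\cdot,M})$ has rank one and $F^{M-1}(x,0)\in V$. The case $\sigma=1$ is symmetric, and $F^{-(M-1)}(V)\subseteq U$ follows by running the argument through the time-reversed solution $\tilde f_{i,j,k} = f_{i,j,1-k}$, which again satisfies \eqref{eqOct} and makes the rank-one layer play the role of level zero.

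For the singularity conditions in Definition~\ref{defDevron}, substituting the rank-one form of $A$ into
\begin{displaymath}
f_{i,j,-1}f_{i,j,1} = f_{i-1,j,0}f_{i+1,j,0} - f_{i,j-1,0}f_{i,j+1,0}
\end{displaymath}
gives immediate cancellation, so $f_{\cdot,\cdot,-1}\equiv 0$. Then $F^{-1}(x,0)$ has all its odd-parity entries zero, $F^{-2}(x,0)$ is the all-zero state, and $F^{-3}$ is undefined on $U$; analogously $F^{b}$ is singular on $V$ for small $b$. The final genericity clause is handled by producing a single witness: choose $B\in Mat(\Lambda)$ with every consecutive $r\times r$ minor $(1\le r\le M)$ nonzero (Corollary~\ref{corDetNeq0}) and pair it with any rank-one $\Lambda$-periodic $A$ whose entries are nonzero. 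Then every $f_{i,j,k}$ for $1\le k\le M$ is nonzero, so no singularity is encountered in $M-1$ forward iterations, and the symmetric argument handles the inverse direction.

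The main obstacle is organizational rather than mathematical: one has to keep track of the parity bit $\sigma$, the swap of $A$ and $B$ under $F$, and carefully verify that a consecutive $k\times k$ submatrix of the $45^\circ$-rotated $B$ agrees with the Dodgson minor in Proposition~\ref{propDodgson}. The two cases $(M,0)\in\Lambda$ versus $(0,M)\in\Lambda$ are handled by transposing the picture.
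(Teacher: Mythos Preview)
Your argument is essentially the same as the paper's: factor the rank-one layer, rescale via Proposition~\ref{propRescale} to reduce to Dodgson condensation, and then use Proposition~\ref{propDetEq0} and Corollary~\ref{corDetNeq0} to force the level-$(M+1)$ data to vanish while the level-$M$ data is generically nonzero, hence rank one. The only substantive difference is in the final genericity clause: the paper invokes the Laurent phenomenon from \cite{RR} (so that nonzero entries of $A(x,\sigma)$ alone guarantee all forward iterates are defined), whereas you argue directly that all intermediate $r\times r$ minors of $B$ are nonzero---note that Corollary~\ref{corDetNeq0} as stated only handles $r=M$, so you would need a short additional argument (e.g.\ descending via the Lewis Carroll identity, or observing that each smaller consecutive minor is a nonzero polynomial on $Mat(\Lambda)$) to cover $1\le r<M$.
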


\begin{proof}
Suppose $(x,\sigma) \in V$, and assume without loss of generality that $\sigma=0$.  Applying $F$ yields 
\begin{displaymath}
x_{i,j}' = \frac{x_{i-1,j}x_{i+1,j} - x_{i,j-1}x_{i,j+1}}{x_{i,j}}
\end{displaymath}
for all $i+j$ even.  The numerator is a 2-by-2 minor of $B(x,\sigma)$ (see \eqref{eqB}) and hence vanishes since $(x,\sigma) \in V$.  So $x_{i,j}'=0$ for all $i+j$ even.  These zeros will be divided by two steps later creating a singularity.  A similar argument shows that $F^{-1}$ quickly becomes singular starting from $U$.

Now suppose $(x,\sigma) \in U$, and assume without loss of generality that $\sigma=0$.  Apply $F$ repeatedly and let $g_{i,j,k}$ denote the resulting solution of \eqref{eqOct} with $\lambda = -1$.  More precisely, let $g_{i,j,0}=x_{i,j}$ for $i+j$ even, let $g_{i,j,1} = x_{i,j}$ for $i+j$ odd, and let the $g_{i,j,k+1}$ be the new values computed during the $k$th iterate of $F$.  Consider the related solution $f_{i,j,k}$ with initial conditions $f_{i,j,0}=1$ for $i+j$ even but $f_{i,j,1} = g_{i,j,1} = x_{i,j}$ for $i+j$ odd.

Since $(x,\sigma) \in U$, we have $A(x,\sigma)$ has rank 1.  In light of \eqref{eqA}, this implies $x_{i+j,-i+j} = \mu_i\nu_j$ for some scalars $\mu_i$ and $\nu_j$.  Changing coordinates,
\begin{displaymath}
x_{i,j} = \mu_{(i-j)/2}\nu_{(i+j)/2}
\end{displaymath}  
for $i+j$ even.  So $g_{i,j,0}=x_{i,j} = m_{i,j,0} = m_{i,j,0}f_{i,j,0}$ for $i+j$ even where $m$ is as defined in \eqref{eqmijk}.  Also, $g_{i,j,1} = f_{i,j,1} = m_{i,j,1}f_{i,j,1}$ for $i+j$ odd.  The $f's$ and $g's$ are both solutions to \eqref{eqOct} so by Proposition \ref{propRescale} and induction
\begin{displaymath}
g_{i,j,k} = m_{i,j,k}f_{i,j,k}
\end{displaymath}
for all $i,j,k$ with $i+j+k$ even, $k \geq 0$.  Combined with Proposition \ref{propDodgson}
\begin{displaymath}
g_{i,j,k} = m_{i,j,k}\det\left[ 
\begin{array}{ccccc}
x_{i-k+1,j} & x_{i-k+2,j+1} & x_{i-k+3,j+2} & \cdots & x_{i,j+k-1} \\
x_{i-k+2,j-1} & x_{i-k+3,j} & x_{i-k+4,j+1} & \cdots & x_{i+1,j+k-2} \\
x_{i-k+3,j-2} & x_{i-k+4,j-1} & x_{i-k+5,j} & \cdots & x_{i+2,j+k-3} \\
\vdots & \vdots & \vdots & \ddots & \vdots \\
x_{i,j-k+1} & x_{i+1,j-k+2} & x_{i+2,j-k+3} & \cdots & x_{i+k-1,j}\\
\end{array}
\right]
\end{displaymath}
In words, each $g_{i,j,k}$ equals a scalar (depending on the $\mu_i$ and $\nu_j$ which depend on $A(x,\sigma)$) times a $k\times k$ consecutive minor of $B(x,\sigma)$.  Generically, the entries of $A(x,\sigma)$ are nonzero which forces each $m_{i,j,k}$ to be a nonzero scalar.  For such generic input, $g_{i,j,k}$ is always defined, and it equals zero if and only the corresponding minor vanishes.

Let $y=F^{M-1}(x)$.  Then $A(y)$ consists of the $g_{i,j,M-1}$ and $B(y)$ consists of the $g_{i,j,M}$.  Now $B(x,\sigma)$ is $\Lambda$-periodic, so by Proposition \ref{propDetEq0}, its consecutive $(M+1)\times(M+1)$ minors vanish.  Therefore, $g_{i,j,M+1} = 0$ for all $i,j$.  By \eqref{eqOct}, 
\begin{displaymath}
g_{i-1,j,M}g_{i+1,j,M} - g_{i,j-1,M}g_{i,j+1,M} = 0
\end{displaymath}
for $i+j+M$ odd.  So the consecutive $2\times 2$ minors of $B(y)$ vanish.  By Corollary \ref{corDetNeq0} there is a choice of $B(x,\sigma)$ whose consecutive $M\times M$ minors are nonzero.  As such the entries of $B(y)$, i.e. the $g_{i,j,M}$, are generically nonzero.  It follows that $B(y)$ has rank 1.  Therefore $y \in V$ as desired.  A similar argument shows that if $(x,\sigma) \in V$ then $F^{-M+1}(x) \in U$.

Lastly, by results of \cite{RR} each $F^k$ is defined at $(x,\sigma) \in X_{\Lambda}$ provided that the entries of $A(x,\sigma)$ are nonzero (this requires the assumption $\lambda=-1$).  If $(x,\sigma) \in U$ has this property then $y=F^{M-1}(x)$ exists.  Moreover, the entries of $B(y)$ are proportional to the $M\times M$ minors of $B(x)$, which are generically nonzero.  In applying $F^{-1}$ the roles of $A$ and $B$ are switched.  Having $B(y)$ nonzero, then, ensures that the iterates of $F^{-1}$ are defined on $y$.
\end{proof}

\begin{rmk}
In general, it is best to consider rational maps defined on irreducible varieties.  This prevents issues that may arise, for instance if the map is singular on an entire reducible component.  As mentioned, the space $X_{\Lambda} \cong \mathbb{C}^{2n} \times \{0,1\}$ and is hence reducible, but there is a symmetry between the two components so this is not a big deal.  On the other hand $U \cong V \cong W \times \{0,1\}$ where $W$ is the space of rank 1, $\Lambda$-periodic matrices, and $W$ can itself be reducible.  For instance, if $\Lambda = \lattice{3}{0}{1}{1}$ as in Example \ref{exOct}, then 
\begin{align*}
((a,b,c,d,e,f),0) \in U &\Longleftrightarrow e^2-ac = a^2-ce = c^2-ae=0 \\
&\Longleftrightarrow \exists t, e=tc=t^2a \textrm{ and } t^3=1
\end{align*}
so $W$ has three components corresponding to the three possible values of $t\in\mathbb{C}$.  A more refined version of Theorem \ref{thmOct} would identify the components of $U$ and $V$ and describe how the map $F^{M-1}$ relates them.
\end{rmk}

\section{$Y$-Systems} \label{secY}
The $Y$-system is similar in spirit, and closely related, to the octahedron recurrence.  Consider variables $Y_{i,j,k}$ for $i+j+k$ even satisfying the relations
\begin{equation}
Y_{i,j,k+1}Y_{i,j,k-1} = \frac{(1+Y_{i,j-1,k})(1+Y_{i,j+1,k})}{(1+Y_{i-1,j,k}^{-1})(1+Y_{i+1,j,k}^{-1})}.
\label{eqYRec}
\end{equation}
It is common to take $1\leq i \leq r$ and $1 \leq j\leq s$ with boundary conditions $Y_{0,j,k}=Y_{r+1,j,k} = \infty$ and $Y_{i,0,k} = Y_{i,s+1,k} = 0$.  The resulting system is periodic in $k$, which is a case of the Zamolodchikov periodicity conjecture \cite{Z}.  We focus on the case where $i,j$ range over all of $\mathbb{Z}$, although we will eventually impose periodic boundary conditions.  A survey discussing $Y$-systems in much greater generality can be found in \cite{KNS}.

Consider the $Y_{i,j,-1}$ and $Y_{i,j,0}$ to be initial conditions.  Specifically, define $u_{i,j}$ by
\begin{displaymath}
u_{i,j} = \begin{cases}
Y_{i,j,-1}^{-1}, & i+j \textrm{ odd} \\
Y_{i,j,0} & i+j \textrm{ even} \\
\end{cases}
\end{displaymath}
Then each $Y_{i,j,k}$ is some rational function of the $u_{i,j}$.  These expressions factor in a predictable way.  Indeed, let
\begin{equation}
M_{i,j,k} = \prod_{l=-k}^k u_{i+l,j}
\label{eqM}
\end{equation}
and recursively define (what will turn out to be) polynomials $F_{i,j,k}$ by $F_{i,j,-1}=F_{i,j,0} = 1$ and
\begin{equation}
F_{i,j,k+1} = \frac{F_{i-1,j,k}F_{i+1,j,k} + M_{i,j,k}F_{i,j-1,k}F_{i,j+1,k}}{F_{i,j,k-1}}
\label{eqF}
\end{equation}
for $k \geq 0$.

\begin{prop}
Expressed in terms of the initial conditions,
\begin{equation}
Y_{i,j,k} = M_{i,j,k}\frac{F_{i,j-1,k}F_{i,j+1,k}}{F_{i-1,j,k}F_{i+1,j,k}}
\label{eqY}
\end{equation}
for all $i+j+k$ even, $k \geq 0$
\end{prop}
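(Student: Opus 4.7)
The plan is to prove the formula by induction on $k \geq 0$, using the $Y$-recurrence \eqref{eqYRec} to pass from levels $k-1$ and $k$ to level $k+1$.

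For the base case $k=0$, the right-hand side of \eqref{eqY} reduces to $M_{i,j,0} = u_{i,j}$ since $F_{\bullet,\bullet,0}=1$, and this matches $Y_{i,j,0}=u_{i,j}$ for $i+j$ even by the definition of $u_{i,j}$. For the second base case $k=1$, one computes $F_{i,j,1}=1+u_{i,j}$ directly from \eqref{eqF}, then applies \eqref{eqYRec} at level $0$ together with $Y_{i,j,-1}=u_{i,j}^{-1}$ (for $i+j$ odd) to verify the formula at $k=1$ after a brief simplification.

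For the inductive step, the key algebraic observation is that the $F$-recurrence \eqref{eqF} is exactly the combination needed to simplify each $1+Y$ factor appearing in \eqref{eqYRec}. Substituting the inductive hypothesis into $1+Y_{i,j\pm 1,k}$, the numerator
\begin{displaymath}
F_{i-1,j\pm 1,k}F_{i+1,j\pm 1,k} + M_{i,j\pm 1,k}F_{i,j,k}F_{i,j\pm 2,k}
\end{displaymath}
coincides with the right-hand side of \eqref{eqF} at position $(i,j\pm 1,k)$, and therefore equals $F_{i,j\pm 1,k+1}F_{i,j\pm 1,k-1}$. A parallel calculation for $1+Y_{i\pm 1,j,k}^{-1}$ yields
\begin{displaymath}
1 + Y_{i\pm 1,j,k}^{-1} = \frac{F_{i\pm 1,j,k+1}F_{i\pm 1,j,k-1}}{M_{i\pm 1,j,k}F_{i\pm 1,j-1,k}F_{i\pm 1,j+1,k}}.
\end{displaymath}
Plugging these four identities into \eqref{eqYRec} and dividing by the inductive expression for $Y_{i,j,k-1}$, the many $F$-factors cancel extensively, leaving
\begin{displaymath}
Y_{i,j,k+1} = \frac{M_{i-1,j,k}M_{i+1,j,k}}{M_{i,j,k-1}} \cdot \frac{F_{i,j-1,k+1}F_{i,j+1,k+1}}{F_{i-1,j,k+1}F_{i+1,j,k+1}}.
\end{displaymath}

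To conclude, I will check the monomial identity $M_{i-1,j,k}M_{i+1,j,k}/M_{i,j,k-1} = M_{i,j,k+1}$, which follows immediately from \eqref{eqM} by counting multiplicities: each factor $u_{i+l,j}$ with $|l|\leq k+1$ appears the same number of times on both sides. The main obstacle is simply the bookkeeping of roughly eight $F$-factors during the cancellation step; no deeper idea is required once one recognizes \eqref{eqF} as tailor-made to reassemble each $1+Y$ and $1+Y^{-1}$ into a ratio of $F$'s.
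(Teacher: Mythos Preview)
Your proposal is correct and follows essentially the same approach as the paper. The paper frames it as defining $Y_{i,j,k}$ by \eqref{eqY} and then verifying that \eqref{eqYRec} holds, while you frame it as induction on $k$, but the computations are identical: both hinge on the identities $1+Y_{i,j,k} = F_{i,j,k-1}F_{i,j,k+1}/(F_{i-1,j,k}F_{i+1,j,k})$ and $1+Y_{i,j,k}^{-1} = F_{i,j,k-1}F_{i,j,k+1}/(M_{i,j,k}F_{i,j-1,k}F_{i,j+1,k})$ coming from \eqref{eqF}, together with the monomial identity $M_{i,j,k+1}M_{i,j,k-1} = M_{i-1,j,k}M_{i+1,j,k}$.
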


\begin{proof}
The base cases $k=0$ and $k=1$ follow easily using $F_{i,j,0}=1$, $M_{i,j,0}=u_{i,j}$, $F_{i,j,1}=1+u_{i,j}$, and $M_{i,j,1} = u_{i-1,j}u_{i,j}u_{i+1,j}$.  Defining the $Y_{i,j,k}$ by \eqref{eqY} we have
\begin{align*}
1+Y_{i,j,k} &= \frac{F_{i,j,k-1}F_{i,j,k+1}}{F_{i-1,j,k}F_{i+1,j,k}} \\
1+Y_{i,j,k}^{-1} &= \frac{F_{i,j,k-1}F_{i,j,k+1}}{M_{i,j,k}F_{i,j-1,k}F_{i,j+1,k}} 
\end{align*}
Both sides of \eqref{eqYRec} become Laurent monomials in certain $M_{i,j,k}$ and $F_{i,j,k}$.  One can check that all the $F$-terms cancel out and what remains is
\begin{displaymath}
M_{i,j,k+1}M_{i,j,k-1} = M_{i-1,j,k}M_{i+1,j,k}
\end{displaymath}
which follows from \eqref{eqM}.
\end{proof}

\begin{rmk}
The general form of the solution \eqref{eqY} is predicted by the cluster algebra method.  In particular, the given proof can be replaced by invoking results of \cite{FZ2}.  Cluster theory also explains why the $F_{i,j,k}$ end up being polynomials as opposed to rational functions.
\end{rmk}

To formally define a dynamical system, consider the state at time $t$ to consist of the $Y_{i,j,t-1}^{-1}$ for $i+j+t$ odd and the $Y_{i,j,t}$ for $i+j+t$ even.  We invert half of the $Y$-variables in order to better match the cluster algebra formulation of $Y$-systems.  As before, we introduce $\sigma \in \{0,1\}$ to keep track of which parity of $i+j$ corresponds to the smaller $t$ value.  Define $G(u,\sigma) = (u', \sigma')$ where
\begin{align}
\sigma' &= 1-\sigma \\
u_{i,j}' &= \begin{cases}
u_{i,j}^{-1}, & i+j \not\equiv \sigma \pmod{2} \\
u_{i,j}\dfrac{(1+u_{i,j-1})(1+u_{i,j+1})}{(1+u_{i-1,j}^{-1})(1+u_{i+1,j}^{-1})}, & i+j \equiv \sigma \pmod{2} \\
\end{cases} \label{eqG}
\end{align}

Define $A=A(u,\sigma)$ and $B=B(u,\sigma)$ as before.  Once again, we assume $A$ and $B$ are periodic with respect to some lattice $\Lambda$.  With these assumptions, we have $G:X_{\Lambda} \to X_{\Lambda}$ with $X_{\Lambda}$ as defined in \eqref{eqXLambda}.

Now, $G$ is singular if $u_{i,j}=-1$ for some $i+j+\sigma$ odd, that is if some entry of $B$ equals $-1$.  As such, let
\begin{align}
U &= \{(u,\sigma) \in X_{\Lambda} : A(u,\sigma) \textrm{ contains all $-1$'s}\} \label{eqUG} \\
V &= \{(u,\sigma) \in X_{\Lambda} : B(u,\sigma) \textrm{ contains all $-1$'s}\} \label{eqVG}
\end{align}
The key to proving the Devron property for the octahedron recurrence was that the formulas for the iterates simplified when restricted to the appropriate set $U$.  The same is true here.  In particular, the polynomials $F_{i,j,k}$ which appear in \eqref{eqY} can be described as determinants of certain matrices whose entries are monomials in the $u_{i,j}$.

To describe these matrices, we need some terminology.  Say that the \emph{negated double ratio} of a $2\times2$ matrix $\left[ \begin{array}{cc} a & b \\ c & d \end{array} \right]$ is $-(bc)/(ad)$.  Given a $k\times k$ matrix $B$, the \emph{lift} of $B$ is the unique $(k+1)\times(k+1)$ matrix $B^*$ with 1's on the diagonal and subdiagonal whose consecutive negated double ratios give the entries of $B$.  That is $b^*_{i,j} = 1$ if $j-i \in \{0,-1\}$ and
\begin{displaymath}
b_{ij} = -\frac{b^*_{i+1,j}b^*_{i,j+1}}{b^*_{i,j}b^*_{i+1,j+1}}
\end{displaymath}
for all $i,j \in \{1,\ldots, n\}$.  For example, the lift of a 2-by-2 matrix is
\begin{displaymath}
B = \left[\begin{array}{cc} a & b \\ c & d \end{array} \right]
\Longrightarrow
B^* = \left[\begin{array}{ccc}
1 & -a & -abd \\
1 & 1 & -d \\
-c & 1 & 1 \\
\end{array}\right]
\end{displaymath}

\begin{prop}
Let $B$ be a $k\times k$ matrix.  Let $C$ be any $(k+1)\times(k+1)$ matrix whose consecutive negated double ratios give the entries of $B$ (no requirement on the diagonal or subdiagonal entries).  Then
\begin{displaymath}
\det(C) = \left(\prod_{i=1}^nc_{ii}\right) \det(B^*)
\end{displaymath}
\label{propLift}
\end{prop}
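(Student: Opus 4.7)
The plan is to exhibit nonzero scalars $\alpha_1, \ldots, \alpha_{k+1}$ and $\beta_1, \ldots, \beta_{k+1}$ such that $c_{ij} = \alpha_i \beta_j b^*_{ij}$ for all $i, j$, that is, $C = D_\alpha B^* D_\beta$ with $D_\alpha = \mathrm{diag}(\alpha_1, \ldots, \alpha_{k+1})$ and $D_\beta$ defined analogously. Once this factorization is in hand, the proposition follows in one line: $\det(C) = \bigl(\prod_i \alpha_i\bigr)\bigl(\prod_j \beta_j\bigr)\det(B^*)$, and because $b^*_{ii} = 1$ we have $c_{ii} = \alpha_i \beta_i$, so $\prod_i c_{ii} = \bigl(\prod_i \alpha_i\bigr)\bigl(\prod_i \beta_i\bigr)$, matching the prefactor exactly.

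The first ingredient is the observation that the assignment of negated double ratios is invariant under left and right multiplication by diagonal matrices: replacing $m_{ij}$ by $\alpha_i \beta_j m_{ij}$ leaves $-m_{i+1,j} m_{i,j+1}/(m_{ij} m_{i+1,j+1})$ unchanged, since the $\alpha$'s and $\beta$'s cancel in pairs. Thus every matrix of the form $D_\alpha B^* D_\beta$ has its negated double ratios equal to the entries of $B$, matching those of $C$ by hypothesis.

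The main step is the converse: if $C$ is an $(k+1) \times (k+1)$ matrix with no zero entries whose negated double ratios coincide with those of $B^*$, then $C = D_\alpha B^* D_\beta$ for some diagonal $D_\alpha, D_\beta$. The slick way is to form the entrywise ratio matrix $R$ with $r_{ij} = c_{ij}/b^*_{ij}$. A short calculation shows that the negated double ratio of $R$ at position $(i,j)$ is the quotient of the negated double ratios of $C$ and $B^*$ at $(i,j)$, which equals $1$. Hence every consecutive $2 \times 2$ minor of $R$ vanishes, and by the rank-one criterion stated just before Theorem~\ref{thmOct} we conclude $r_{ij} = \alpha_i \beta_j$, which is exactly the required factorization.

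The main obstacle is handling the degenerate case in which some entry of $C$ (or of $B^*$) is zero, since then the ratio matrix $R$ is ill-defined and the rank-one argument breaks down. I would handle this by a density argument: the target identity is a polynomial identity in the free entries of $C$ (once one fixes a choice of entries determining the rest via the double-ratio constraints), so it suffices to verify it on the Zariski open subset where all entries are nonzero, where the factorization argument applies. An alternative is to choose free parameters along the top row and leftmost column, reconstruct $C$ by the double-ratio recursion $c_{i+1,j+1} = -c_{i+1,j} c_{i,j+1}/(b_{ij} c_{ij})$, and expand $\det(C)$ directly to confirm the claimed scaling behavior.
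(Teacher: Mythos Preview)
Your approach is essentially the same as the paper's: both arguments establish that $C = D_\alpha B^* D_\beta$ for diagonal matrices $D_\alpha, D_\beta$ and then read off the determinant identity, using $b^*_{ii}=1$ to identify $\prod_i c_{ii}$ with $\prod_i \alpha_i\beta_i$.

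The one genuine difference is in how the diagonal rescaling is obtained. You form the entrywise ratio $r_{ij}=c_{ij}/b^*_{ij}$ and invoke a rank-one criterion, which forces you to worry about vanishing entries of $B^*$ and fall back on a density argument. The paper instead proves the rescaling constructively (its Lemma~\ref{lemRescale}): it walks along the diagonal and subdiagonal of $C$, rescaling one row or column at a time to set each of those $2k+1$ entries to $1$; since rescalings preserve the negated double ratios, the result must be $B^*$ by uniqueness of the lift. This only needs the diagonal and subdiagonal entries of $C$ to be nonzero, which is already implicit in the hypothesis that the double ratios of $C$ are defined, so no density argument is required. Your route is perfectly fine, but the paper's constructive rescaling is slightly cleaner for exactly the degenerate case you flagged.
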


\begin{lem}
It is possible to rescale the rows and columns of $C$ to obtain $B^*$.
\label{lemRescale}
\end{lem}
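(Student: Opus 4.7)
The plan is to exploit the observation that the map $C \mapsto B$ sending a matrix to the array of its consecutive negated double ratios is invariant under independent row and column rescalings, and then to use this freedom to normalize $C$ to have $1$'s on its diagonal and subdiagonal.

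First I would verify the invariance. Given nonzero scalars $r_1,\ldots,r_{k+1}$ and $s_1,\ldots,s_{k+1}$, let $\tilde C$ be the matrix with entries $\tilde c_{ij} = r_i s_j c_{ij}$. For each $i,j \in \{1,\ldots,k\}$,
\[
-\frac{\tilde c_{i,j+1}\tilde c_{i+1,j}}{\tilde c_{ij}\tilde c_{i+1,j+1}} = -\frac{(r_i s_{j+1})(r_{i+1}s_j)}{(r_i s_j)(r_{i+1}s_{j+1})}\cdot\frac{c_{i,j+1}c_{i+1,j}}{c_{ij}c_{i+1,j+1}} = -\frac{c_{i,j+1}c_{i+1,j}}{c_{ij}c_{i+1,j+1}} = b_{ij},
\]
so the consecutive negated double ratios of $\tilde C$ still encode $B$.

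Second I would construct the normalizing rescaling explicitly by induction along the diagonal. The key (automatic) input is that every diagonal entry $c_{ii}$ and every subdiagonal entry $c_{i+1,i}$ is nonzero, because each appears in the denominator of at least one of the given negated double ratios of $C$. Set $s_1 = 1$ and $r_1 = 1/c_{11}$; then, for $i = 1, 2, \ldots, k$ in order, define
\[
r_{i+1} = \frac{1}{s_i\, c_{i+1,i}}, \qquad s_{i+1} = \frac{1}{r_{i+1}\, c_{i+1,i+1}}.
\]
By construction $\tilde c_{i+1,i} = 1$ and $\tilde c_{i+1,i+1} = 1$ at each step, so $\tilde C$ has $1$'s on its diagonal and subdiagonal.

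Combining the two steps, $\tilde C$ is a matrix with $1$'s on the diagonal and subdiagonal whose consecutive negated double ratios recover $B$. By the uniqueness clause in the definition of the lift, $\tilde C = B^*$, which is exactly the claim. The only conceivable obstacle is a degenerate $C$ in which the induction fails because some subdiagonal entry vanishes at the boundary where it is not forced into a denominator; this will not arise for the $C$ produced in the applications of the lemma, and if needed one can handle it by a continuity/genericity argument using that both sides of the identity in Proposition~\ref{propLift} are polynomial in the entries of $C$.
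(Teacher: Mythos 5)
Your proof is correct and follows essentially the same route as the paper: rescale one row or column at a time, working along the diagonal and subdiagonal, so that each normalization does not disturb the ones already made; the only cosmetic difference is that you give the rescaling factors $r_i,s_i$ by an explicit recursion, while the paper just describes the ordering of the rescalings. Your added remark about nonvanishing of the (sub)diagonal entries is a useful point the paper leaves implicit — just note that $c_{21}$ (and $c_{k+1,k}$) actually fail to appear in any denominator when $k=1$, so the ``appears in a denominator'' argument needs the $k\ge 2$ caveat, which your closing genericity remark already covers.
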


\begin{proof}
We want to make the $c_{i,i}$ and $c_{i+1,i}$ all equal to 1.  Order these entries as follows
\begin{displaymath}
c_{1,1}, c_{2,1}, c_{2,2}, c_{3,2}, c_{3,3}, \ldots, c_{n,n}.
\end{displaymath}
Place an order on the rows and columns (leaving out the first row) as follows: first column, second row, second column, third row, third column, \ldots, last row, last column.  Each entry is in the corresponding row or column but not in any subsequent row or column.  So each of these entries can be made to equal 1 by doing rescalings in the indicated order.  The rescalings do not effect the negated double ratios, so they still equal the entries of $B$.  Therefore, after rescaling $C=B^*$.
\end{proof}

\begin{proof}[Proof of Proposition \ref{propLift}]
For any permutation $\pi$ consider the expression
\begin{displaymath}
\frac{\prod_{i=1}^nc_{i,\pi(i)}}{\prod_{i=1}^nc_{i,i}}.
\end{displaymath}
This expression is unchanged by scaling a row or column, so by Lemma \ref{lemRescale} it equals the corresponding expression of $B^*$.  Rearranging:
\begin{align*}
\frac{\prod_{i=1}^nc_{i,\pi(i)}}{\prod_{i=1}^nb^*_{i,\pi(i)}} &= \frac{\prod_{i=1}^nc_{i,i}}{\prod_{i=1}^nb^*_{i,i}} \\
&= \prod_{i=1}^nc_{i,i}
\end{align*}
This is independent of the permutation, so the determinants have the same ratio:
\begin{displaymath}
\frac{\det(C)}{\det(B^*)} = \prod_{i=1}^nc_{i,i}
\end{displaymath}
\end{proof}

\begin{prop}
Let $(u,\sigma) \in U$ and assume for convenience that $\sigma=1$.  Then
\begin{equation}
F_{i,j,k} = \det\left(\left[ 
\begin{array}{cccc}
u_{i-k+1,j} & u_{i-k+2,j+1} & \cdots & u_{i,j+k-1} \\
u_{i-k+2,j-1} & u_{i-k+3,j} & \cdots & u_{i+1,j+k-2} \\
\vdots & \vdots & \ddots & \vdots \\
u_{i,j-k+1} & u_{i+1,j-k+2} & \cdots & u_{i+k-1,j} \\
\end{array}
\right]^*\right)
\label{eqFDet}
\end{equation} 
for $k \geq 0$ and $i+j+k$ odd.  In words, if $A(u,\sigma)$ consists of all $-1$'s, then the $F_{i,j,k}$ are determinants of the lifts of the consecutive $k\times k$ submatrices of $B(u,\sigma)$.
\end{prop}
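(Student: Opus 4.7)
The plan is to prove \eqref{eqFDet} by induction on $k$, applying the Desnanot--Jacobi (Dodgson condensation) identity to the $(k+2)\times(k+2)$ lift $B^*=B(k+1,i,j)^*$ of the submatrix displayed on the right side of \eqref{eqFDet} with $k$ replaced by $k+1$. The base cases $k=0,1$ are immediate: the lift of the empty matrix is $[1]$ of determinant $1=F_{i,j,0}$, and the lift of $[u_{i,j}]$ is $\left(\begin{smallmatrix} 1 & -u_{i,j} \\ 1 & 1 \end{smallmatrix}\right)$ of determinant $1+u_{i,j}=F_{i,j,1}$.

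For the inductive step, Desnanot--Jacobi on $B^*$ yields an identity relating $\det(B^*)$, the determinant of the central $k\times k$ minor (delete rows and columns $1$ and $k+2$), and the four corner $(k+1)\times(k+1)$ minors. The central minor and the same-index corner minors $(B^*)^1_1$ and $(B^*)^{k+2}_{k+2}$ each inherit $1$'s on their diagonal and subdiagonal from $B^*$; their consecutive negated double ratios recover the submatrices $B(k-1,i,j)$, $B(k,i+1,j)$, and $B(k,i-1,j)$ respectively, so each minor is itself a lift, with determinant given inductively by $F_{i,j,k-1}$, $F_{i+1,j,k}$, and $F_{i-1,j,k}$.

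The two off-diagonal corner minors $(B^*)^1_{k+2}$ and $(B^*)^{k+2}_1$ are not themselves lifts, but Proposition \ref{propLift} reduces each to the product of its diagonal entries times the determinant of the appropriate lift. The diagonal of $(B^*)^1_{k+2}$ consists of subdiagonal entries of $B^*$ (all $1$'s), reducing this minor to $\det(B(k,i,j-1)^*)=F_{i,j-1,k}$. The diagonal of $(B^*)^{k+2}_1$ consists of superdiagonal entries of $B^*$, which by definition equal $-u_{i+l,j}$ for $l\in\{-k,-k+2,\ldots,k\}$, reducing this minor to $(-1)^{k+1}\bigl(\prod_{l\equiv k\pmod{2},\ l\in[-k,k]} u_{i+l,j}\bigr)F_{i,j+1,k}$.

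Substituting these five evaluations into Desnanot--Jacobi and cancelling the common factor $F_{i,j,k-1}$, matching with the $F$-recurrence \eqref{eqF} reduces to the identity $(-1)^k=\prod_{l\in\{-k+1,-k+3,\ldots,k-1\}} u_{i+l,j}$. This is where the hypothesis $(u,\sigma)\in U$ enters: with $\sigma=1$ we have $u_{a,b}=-1$ whenever $a+b$ is odd, while the parity constraint $i+j+k$ even (forced by the parity $i+j+(k+1)$ odd of the $F_{i,j,k+1}$ under consideration) ensures each of the $k$ terms in the product satisfies $(i+l)+j$ odd and hence equals $-1$. The main obstacle is the combinatorial bookkeeping: identifying each subdeterminant as a lift of the appropriate submatrix of $B$ centered at a neighbor of $(i,j)$, and tracking signs and parities so that $M_{i,j,k}$ emerges with the correct overall sign.
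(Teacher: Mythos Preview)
Your proof is correct and follows essentially the same approach as the paper's: induction on $k$ via the Desnanot--Jacobi identity applied to the $(k+2)\times(k+2)$ lift, with the same identification of the five minors (your $(B^*)^1_1$, $(B^*)^{k+2}_{k+2}$, $(B^*)^1_{k+2}$, $(B^*)^{k+2}_1$ are the paper's $SE$, $NW$, $SW$, $NE$) and the same use of Proposition~\ref{propLift} for the two off-diagonal ones. The only cosmetic difference is that you phrase the final step as verifying $(-1)^k=\prod_{l\in\{-k+1,\ldots,k-1\}} u_{i+l,j}$ directly, whereas the paper computes $M_{i,j,k}$ and matches it to $-\det(NE)/F_{i,j+1,k}$; these are the same calculation.
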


\begin{proof}
Do induction on $k$.  If $k=0$ then we are taking the lift of a $0\times0$ matrix, which is the matrix $[1]$ whose determinant is $1=F_{i,j,0}$.  Suppose $k=1$.  Then the right hand side is
\begin{displaymath}
\det([u_{i,j}]^*)
= \det\left(\left[ \begin{array}{cc}
1 & u_{i,j}  \\
1 & 1  \\
\end{array}
\right]\right)
= 1+u_{i,j}
\end{displaymath}
which does in fact equal $F_{i,j,1}$.

It remains to verify that the $F_{i,j,k}$ defined by \eqref{eqFDet} satisfy the recurrence \eqref{eqF}.  Suppose $k \geq 1$ and $i+j+k$ is even.  Let $P$ be the matrix for which we are asserting $F_{i,j,k+1} = \det(P^*)$, that is
\begin{displaymath}
P = \left[
\begin{array}{cccc}
u_{i-k,j} & u_{i-k+1,j+1} & \cdots & u_{i,j+k} \\
u_{i-k+1,j-1} & u_{i-k+2,j} & \cdots & u_{i+1,j+k-1} \\
\vdots & \vdots & \ddots & \vdots \\
u_{i,j-k} & u_{i+1,j-k+1} & \cdots & u_{i+k,j} \\
\end{array}
\right] .
\end{displaymath}
The other five $F$-polynomials appearing in \eqref{eqF} are all assumed to be determinants of lifts of submatrices of $P$.  Specifically, $F_{i-1,j,k}$, $F_{i,j+1,k}$, $F_{i+1,j,k}$, and $F_{i,j-1,k}$ arise from the Northwest, Northeast, Southeast, and Southwest $k\times k$ submatrices respectively.  Meanwhile $F_{i,j,k-1}$ arises from the central $(k-1)\times(k-1)$ submatrix.  Let $NW$, $NE$, $SE$, $SW$ denote the corresponding $(k+1)\times(k+1)$ submatrices of the $(k+2)\times(k+2)$ matrix $P^*$.  Let $C$ denote the central $k\times k$ submatrix.  By the Lewis Carroll identity,
\begin{displaymath}
\det(P^*)\det(C) = \det(NW)\det(SE) - \det(NE)\det(SW).
\end{displaymath}
The proof comes down to verifying the following
\begin{align*}
\det(NW) &= F_{i-1,j,k} \\
\det(SE) &= F_{i+1,j,k} \\
\det(NE) &= -M_{i,j,k}F_{i,j+1,k} \\
\det(SW) &= F_{i,j-1,k} \\
\det(C) &= F_{i,j,k-1} 
\end{align*}
Recall that the diagonal and subdiagonal of $P^*$ consist of all 1's.  Because of their alignment, the same is true of $NW$, $SE$, and $C$.  Therefore these three are lifts of the corresponding submatrices of $P$ and the corresponding formulas follow from \eqref{eqFDet}.  In contrast, $NE$ and $SW$ are not lifts.  However their negated double ratios give the corresponding submatrices of $P$, so their determinants can be computed using Proposition \ref{propLift}.  The diagonal entries of $SW$ all equal 1, so we get $\det(SW) = F_{i,j-1,k}$.  The $m$th diagonal entry of $NE$ is $p^*_{m,m+1}$.  We know
\begin{displaymath}
p^*_{m,m} = p^*_{m+1,m+1} = p^*_{m+1,m} = 1
\end{displaymath}
so to get the correct negated double ratio
\begin{displaymath}
p^*_{m,m+1} = -p_{m,m} = u_{i-k+2(m-1),j}
\end{displaymath}
Multiplying all of these yields
\begin{displaymath}
\det(NE) = (-1)^{k+1}\left(\prod_{m=1}^{k+1}u_{i-k+2(m-1),j}\right)F_{i,j+1,k}
\end{displaymath}
Recall $M_{i,j,k} = u_{i-k,j}u_{i-k+1,j}\cdots u_{i+k,j}$.  We are assuming $u_{i,j}=-1$ for $i+j$ odd.  Since $i+j+k$ is even
\begin{align*}
M_{i,j,k} &= u_{i-k,j}(-1)u_{i-k+2,j}(-1)\cdots (-1)u_{i+k,j} \\
&= (-1)^k\left(\prod_{m=1}^{k+1}u_{i-k+2(m-1),j}\right)
\end{align*}
So $\det(NE) = -M_{i,j,k}F_{i,j+1,k}$ as desired.
\end{proof}

Experiments suggest that the map $G$ driving the $Y$-system does not exhibit the Devron property on $X_{\Lambda}$.  More precisely, for any $k > 0$ we have that $G^k$ is defined for generic $(u,\sigma) \in U$ with $U$ as in $\eqref{eqUG}$.  However, there is an invariant hypersurface in $X_{\Lambda}$ on which the Devron property holds.  Define a map $\rho:X_{\Lambda} \to \mathbb{C}$ by
\begin{displaymath}
\rho(u,\sigma) = \prod_{i,j} u_{i,j}
\end{displaymath}
where the product is over the distinct $u_{i,j}$.  Recall that the $u_{i,j}$ are periodic modulo the lattice
\begin{displaymath}
\Lambda' = \{(i+j,-i+j) : (i,j) \in \Lambda\}
\end{displaymath}
which has index $2n$ where $n = \det(\Lambda)$.  So $\rho$ is a product of $2n$ variables.

\begin{lem}
$\rho$ is a conserved quantity of $G$, i.e. $\rho(G(u,\sigma)) = \rho(u,\sigma)$ for all $(u,\sigma) \in X_{\Lambda}$.
\end{lem}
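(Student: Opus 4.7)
The plan is to verify that $\rho(G(u,\sigma)) = \rho(u,\sigma)$ by directly computing $\prod_{i,j} u_{i,j}'$ and showing all of the extra factors introduced by the formula \eqref{eqG} ultimately cancel. Throughout, the product is understood as running over a fundamental domain of the lattice $\Lambda'$.

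First, I would partition the indices $(i,j)$ in a fundamental domain into two classes: $A = \{(i,j) : i+j \equiv \sigma \pmod 2\}$ and $B = \{(i,j) : i+j \not\equiv \sigma \pmod 2\}$, each of size $n = \det(\Lambda)$. On $B$ the map simply inverts, so $\prod_{(i,j) \in B} u_{i,j}' = \prod_{(i,j) \in B} u_{i,j}^{-1}$. On $A$ the map multiplies $u_{i,j}$ by a ratio whose factors all involve neighbors $(i\pm 1, j)$ and $(i, j\pm 1)$, which all lie in $B$. I would rewrite $1 + u_{i',j'}^{-1} = (1+u_{i',j'})/u_{i',j'}$ to convert everything into factors of the form $1+u_{i',j'}$ and powers of $u_{i',j'}$ for $(i',j') \in B$.

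The key combinatorial step is then to count how often each such factor appears as one runs $(i,j)$ over $A$. Each $B$-vertex $(i',j')$ is a neighbor of exactly four $A$-vertices: two horizontal neighbors $(i'\pm 1, j')$ (contributing $(1+u_{i',j'})$ to the denominator) and two vertical neighbors $(i', j'\pm 1)$ (contributing $(1+u_{i',j'})$ to the numerator). By $\Lambda'$-periodicity, this count is the same when restricted to the fundamental domain. So every factor $(1+u_{i',j'})$ appears twice in the numerator and twice in the denominator of $\prod_{A} u_{i,j}'$ and therefore cancels. On the other hand, the conversion of the $1+u^{-1}$ terms brings out a factor $u_{i-1,j}u_{i+1,j}$ in the numerator for each $(i,j) \in A$, and the same counting shows this product equals $\prod_{(i',j') \in B} u_{i',j'}^2$.

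Combining, $\prod_{(i,j) \in A} u_{i,j}' = \prod_{(i,j) \in A} u_{i,j} \cdot \prod_{(i',j') \in B} u_{i',j'}^2$, and multiplying by $\prod_{(i,j) \in B} u_{i,j}' = \prod_{(i,j) \in B} u_{i,j}^{-1}$ yields $\prod_{(i,j) \in A} u_{i,j} \cdot \prod_{(i,j) \in B} u_{i,j} = \rho(u,\sigma)$, as required. The only subtlety — the main thing to be careful about — is verifying that the neighbor counts behave as expected after passing to the quotient by $\Lambda'$; this follows because $\Lambda'$ is a sublattice of $\mathbb{Z}^2$, so the four neighbors of a vertex are all distinct modulo $\Lambda'$ from each other as long as $\Lambda'$ contains none of the vectors $(\pm 2, 0)$, $(0, \pm 2)$, $(\pm 1, \pm 1)$, and the argument can be stated uniformly by counting each factor with multiplicity in the periodic product.
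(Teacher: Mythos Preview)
Your proof is correct and follows essentially the same approach as the paper: both split the product over a fundamental domain into the two parity classes, use the neighbor-counting to see that each $B$-variable contributes exactly twice to the numerator and twice to the denominator, and simplify. The paper compresses your two-step manipulation (rewrite $1+u^{-1}=(1+u)/u$, then cancel the $(1+u)$'s) into the single identity $(1+x)/(1+x^{-1})=x$, but the computation is the same.
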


\begin{proof}
Let $(u',1-\sigma) = G(u,\sigma)$.  Break the expression $\rho(u',1-\sigma)$ into two pieces, one for each parity of $i+j$.  By \eqref{eqG},
\begin{displaymath}
\prod_{i+j+\sigma \textrm{ odd}} u'_{i,j} = \prod_{i+j+\sigma \textrm{ odd}} u_{i,j}^{-1}.
\end{displaymath}
Also by \eqref{eqG},
\begin{displaymath}
\prod_{i+j+\sigma \textrm{ even}} u'_{i,j} = \left(\prod_{i+j+\sigma \textrm{ even}} u_{i,j} \right) \frac{\displaystyle \prod_{i+j+\sigma \textrm{ odd}} (1+u_{i,j})^2}{\displaystyle \prod_{i+j+\sigma \textrm{ odd}} (1+u_{i,j}^{-1})^2}
\end{displaymath}
Using $(1+x)/(1+x^{-1}) = x$ this simplifies to 
\begin{displaymath}
\prod_{i+j+\sigma \textrm{ even}} u'_{i,j} = \prod_{i+j+\sigma \textrm{ even}} u_{i,j}  \prod_{i+j+\sigma \textrm{ odd}} u_{i,j}^2
\end{displaymath}
Therefore
\begin{align*}
\rho(u',1-\sigma) &= \prod_{i+j+\sigma \textrm{ odd}} u'_{i,j}\prod_{i+j+\sigma \textrm{ even}} u'_{i,j} \\
&= \prod_{i+j+\sigma \textrm{ odd}} u_{i,j}^{-1} \prod_{i+j+\sigma \textrm{ even}} u_{i,j}  \prod_{i+j+\sigma \textrm{ odd}} u_{i,j}^2 \\
&= \prod_{i,j} u_{i,j} = \rho(u,\sigma)
\end{align*}
as desired.
\end{proof}

Define 
\begin{equation}
\overline{X}_{\Lambda} = \{(u,\sigma)\in X_{\Lambda} : \rho(u,\sigma)=1\}.
\label{eqXLambdaBar}
\end{equation}
Since $\rho$ is conserved by $G$, we get a restriction
\begin{displaymath}
G : \overline{X}_{\Lambda} \to \overline{X}_{\Lambda}.
\end{displaymath}
Define $\overline{U} = \overline{X}_{\Lambda} \cap U$ and $\overline{V} = \overline{X}_{\Lambda} \cap V$ for $U,V$ as in \eqref{eqUG} and \eqref{eqVG}.  Recall that $A(u,\sigma)$ has all entries equal to -1 for $(u,\sigma) \in U$.  Hence the product of these entries is $(-1)^n$.  As such, an element  $(u,\sigma) \in U$ is in $\overline{U}$ if and only if the product of the distinct entries of $B(u,\sigma)$ also equal $(-1)^n$.

\begin{prop} \label{propFEq0}
Let $(u,\sigma) \in \overline{U} \subseteq \overline{X}_{\Lambda}$ and assume without loss of generality that $\sigma=1$.  Then $F_{i,j,n}=0$ for all $i+j+n$ odd where $n = \det(\Lambda)$.
\end{prop}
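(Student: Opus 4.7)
The plan is to apply Proposition \ref{propLift} to reduce the claim to a determinantal identity. Writing $P = P_{i,j,n}$, formula \eqref{eqFDet} gives $F_{i,j,n} = \det(P^*)$, so by Proposition \ref{propLift} it suffices to exhibit an $(n+1)\times(n+1)$ matrix $C$ with nonzero diagonal entries, whose consecutive negated double ratios give the entries of $P$, and with $\det C = 0$ on $\overline{U}$.

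First, I extend $P$ to a function $P: \mathbb{Z}^2 \to \mathbb{C}^*$ via the same formula $P(r,s) = u_{i-n+r+s-1, j+s-r}$. The map $(r,s)\mapsto(i-n+r+s-1,\, j+s-r)$ intertwines $\Lambda$-translation with translation by the companion lattice $\Lambda'$ of \eqref{eqCompanion}, so $\Lambda$-periodicity of the extended $P$ follows from $\Lambda$-periodicity of $B(u,\sigma)$. I then build a $\Lambda$-periodic $C:\mathbb{Z}^2\to\mathbb{C}^*$ whose consecutive negated double ratios equal $P$, and restrict to the window $\{1,\ldots,n+1\}^2$.

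Taking logarithms, the double-ratio relation becomes the discrete mixed-difference equation $\Delta_r\Delta_s(\log C) = -\log(-P)$, which admits a global solution on $\mathbb{Z}^2$ parameterized by arbitrary nonvanishing data on one row and one column. The obstruction to making this solution $\Lambda$-periodic is a monodromy condition which, after using the $\Lambda$-periodicity of $P$, collapses to the single scalar identity
\begin{displaymath}
\prod_{(r,s)\in F}(-P(r,s)) = 1,
\end{displaymath}
where $F\subseteq\mathbb{Z}^2$ is any fundamental domain for $\Lambda$. Under the bijection above, the $n$ sites of $F$ correspond via $\Lambda'$-periodicity of $u$ to the $n$ distinct entries of $B$, so the product equals $(-1)^n\prod_B u_{i,j}$. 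On $\overline{U}$ we have $\prod_A u_{i,j} = (-1)^n$ (as $A$ consists of $n$ distinct entries all equal to $-1$) and $\rho(u,\sigma) = \prod_A u_{i,j}\cdot\prod_B u_{i,j} = 1$, hence $\prod_B u_{i,j} = (-1)^n$ and the product equals $(-1)^{2n} = 1$. The monodromy vanishes, so a $\Lambda$-periodic $C$ exists.

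To finish, apply pigeonhole to the $n+1$ classes $(0,0), (1,0), \ldots, (n,0)$ in $\mathbb{Z}^2/\Lambda$: since there are only $n$ classes, some $(M,0)\in\Lambda$ with $1\leq M\leq n$ (or, symmetrically, $(0,M)\in\Lambda$). Then $\Lambda$-periodicity forces rows $r$ and $r+M$ of the restricted $(n+1)\times(n+1)$ matrix $C$ to agree, so $\det C = 0$. For generic boundary data the diagonal of $C$ is nonvanishing, and Proposition \ref{propLift} then yields $F_{i,j,n} = \det(P^*) = \det C/\prod c_{kk} = 0$.

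The main obstacle is the monodromy calculation: the two generators of $\Lambda$ give, a priori, two compatibility conditions, and one must verify that both reduce to the single product identity displayed above. This requires careful bookkeeping of how the summation regions for $\log(-P)$ shift under each generator and close up using $\Lambda$-periodicity of $P$, showing that the two candidate ``fundamental parallelograms'' yield the same product.
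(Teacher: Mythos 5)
Your reduction to Proposition \ref{propLift} is sound, and your computation that $\prod_{F}(-P)=1$ on $\overline{U}$ is exactly the identity the paper extracts from $\rho=1$. The gap is the claim that this single scalar identity is the only obstruction to the existence of a $\Lambda$-periodic $C$ with prescribed negated double ratios. It is not. The double-ratio map is a homomorphism from the torus of nowhere-zero $\Lambda$-periodic matrices to itself whose kernel consists of the $\Lambda$-periodic rank-one matrices $c_{r,s}=\mu_r\nu_s$; the codimension of its image equals the dimension of this kernel, which is usually much larger than $1$. Concretely, take $\Lambda=\lattice{2}{0}{0}{2}$, so $n=4$ and a periodic matrix has four distinct entries $q_{11},q_{12},q_{21},q_{22}$. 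A $\Lambda$-periodic $C$ has entries depending only on the parities of the indices, and a direct computation shows its consecutive negated double ratios necessarily satisfy $q_{12}=q_{21}=q_{11}^{-1}$ and $q_{22}=q_{11}$. So for generic data with $q_{11}q_{12}q_{21}q_{22}=1$ (say $2,3,4,1/24$) no periodic $C$ exists, and the construction fails at precisely the step you flagged as the ``main obstacle'': the compatibility conditions do not collapse to one scalar identity. Relatedly, your endgame conclusion that two full rows of $C$ (equivalently, after the rescalings of Lemma \ref{lemRescale}, two full rows of $P^*$) coincide is stronger than what is actually true; generically no two rows of $P^*$ are proportional.

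The paper's proof avoids constructing any auxiliary periodic matrix. Writing $a$ for the minimal positive integer with $(a,0)\in\Lambda$ and $b=n/a$, it multiplies the identity $-p_{i,j}=p^*_{i+1,j}p^*_{i,j+1}/(p^*_{i,j}p^*_{i+1,j+1})$ over rectangles whose side lengths are multiples of $a$ and $b$; since every $a\times b$ rectangle is a fundamental domain for $\Lambda$, each such product telescopes to $1$, which yields only the partial relations $p^*_{i',j}p^*_{i,j'}=p^*_{i,j}p^*_{i',j'}$ for $i\equiv i'\pmod{a}$ \emph{and} $j\equiv j'\pmod{b}$. This says rows $i$ and $i'$ of $P^*$ are proportional only when restricted to columns in a fixed residue class mod $b$, with constants that may differ from class to class. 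One then shows that the $b+1$ rows $1,a+1,\ldots,n+1$ of $P^*$ are linearly dependent: any maximal minor on these rows involves $b+1$ columns, two of which are congruent mod $b$ by pigeonhole, and the corresponding two columns of the submatrix are proportional. To repair your argument you would need to replace ``$\Lambda$-periodic $C$'' by this weaker, genuinely available structure, at which point you have essentially reproduced the paper's proof.
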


\begin{proof}
Let $P$ be the consecutive $n\times n$ submatrix of $B(u,\sigma)$ such that
\begin{displaymath}
F_{i,j,k} = \det(P^*)
\end{displaymath}
according to \eqref{eqFDet}.  Let $a>0$ be minimal with respect to the property that $(a,0) \in \Lambda$.  It follows that $a | n$ and that 
\begin{displaymath}
\{(i,j) : 1 \leq i \leq a, 1 \leq j \leq b\}
\end{displaymath}
is a fundamental domain for $\Lambda$, where $b=n/a$.  Since $(u,\sigma) \in \overline{U}$,
\begin{displaymath}
\prod_{i=1}^{a}\prod_{j=1}^{b} p_{i,j} = (-1)^n
\end{displaymath}
or put another way, the product of the $-p_{i,j}$ in the rectangle equals 1.  Recall 
\begin{displaymath}
-p_{i,j} = \frac{p^*_{i+1,j}p^*_{i,j+1}}{p^*_{i,j}p^*_{i+1,j+1}}.
\end{displaymath}
Computing the product, all the $p^*_{i,j}$ cancel out except those on the corners, leaving
\begin{displaymath}
1 = \frac{p^*_{a+1,1}p^*_{1,b+1}}{p^*_{1,1}p^*_{a+1,b+1}}.
\end{displaymath}
The same argument works with any $a\times b$ rectangle, so 
\begin{displaymath}
1 = \frac{p^*_{a+i,j}p^*_{i,b+j}}{p^*_{i,j}p^*_{a+i,b+j}}.
\end{displaymath}
for any $i,j$.  In fact, the same holds starting from any rectangles whose dimensions are divisible by $a$ and $b$ respectively, so
\begin{displaymath}
1 = \frac{p^*_{i',j}p^*_{i,j'}}{p^*_{i,j}p^*_{i',j'}}.
\end{displaymath}
whenever $i \equiv i' \pmod{a}$ and $j \equiv j' \pmod{b}$.  Put another way, 
\begin{displaymath}
0 = p^*_{i',j}p^*_{i,j'} -{p^*_{i,j}p^*_{i',j'}}.
\end{displaymath}
for such $i,i',j,j'$.

Now $P^*$ is and $(n+1)\times(n+1)$ matrix, where $n=ab$.  We will show that the $b+1$ rows $I = \{1,a+1,2a+1,\ldots, n+1\}$ are linearly dependent.  Indeed, consider any maximal minor of these rows, i.e. a $(b+1)\times(b+1)$ minor of $P^*$ that consists of the rows in $I$ and any $b+1$ columns $J$.  By the pigeonhole principle, there exists $j,j' \in J$ with $j \equiv j' \pmod{b}$.  Therefore the 2-by-2 minors of
\begin{displaymath}
\left[\begin{array}{cc}
p^*_{1,j} & p^*_{1,j'} \\
p^*_{a+1,j} & p^*_{a+1,j'} \\
\vdots & \vdots \\
p^*_{n+1,j} & p^*{n+1,j'} \\
\end{array} \right]
\end{displaymath} 
all equal 0.  It follows that the $I,J$ minor of $P^*$ equals 0.  As $J$ was arbitrary, this confirms that the rows $I$ are dependent.  Therefore $P^*$ is degenerate and $F_{i,j,n} = \det(P^*) = 0$. 
\end{proof}

\begin{thm}
Let $G: \overline{X}_{\Lambda} \to \overline{X}_{\Lambda}$ and $\overline{U}, \overline{V}$ all be as before.  Then $\overline{U}, \overline{V}$ is a Devron pair for $G$.  The width of the pair is at most $n-1$ where $n = \det(\Lambda)$.
\label{thmYDevron}
\end{thm}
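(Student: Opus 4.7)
The plan is to verify the four conditions of Definition \ref{defDevron} for the pair $(\overline{U}, \overline{V})$, taking $M = n-1$ where $n = \det(\Lambda)$. The main machinery is already in place: the polynomial expression \eqref{eqFDet} for $F_{i,j,k}$ as the determinant of a lift, together with the vanishing result Proposition \ref{propFEq0} saying $F_{i,j,n}=0$ on $\overline{U}$, will drive the proof.

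I would begin with the two singularity conditions, which are immediate from \eqref{eqG}. If $(u,\sigma) \in \overline{V}$, then for every $(i,j)$ with $i+j \equiv \sigma \pmod{2}$ the neighbors $u_{i\pm1,j}$ are $-1$ entries of $B$, so the factors $(1+u_{i\pm 1,j}^{-1})$ in the denominator of \eqref{eqG} vanish and $G$ is singular on $\overline{V}$. The analogous formula for $G^{-1}$ swaps the roles of $A$ and $B$, so $G^{-1}$ is singular on $\overline{U}$.

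The core step is showing $G^{n-1}(\overline{U}) \subseteq \overline{V}$. Given $(u,\sigma) \in \overline{U}$ with $\sigma=1$, the $G$-iterates reproduce the $Y$-system solution with initial conditions encoded by $u$. From the rearrangement
\begin{displaymath}
1 + Y_{i,j,k} = \frac{F_{i,j,k-1}F_{i,j,k+1}}{F_{i-1,j,k}F_{i+1,j,k}}
\end{displaymath}
worked out en route to \eqref{eqY}, the vanishing $F_{i,j,n}=0$ from Proposition \ref{propFEq0} forces $Y_{i,j,n-1} = -1$ for every $(i,j)$ with $i+j+n$ odd. Unwinding the convention that encodes the $Y$-system state as $(u^{(k)},\sigma^{(k)})$, at time $k = n-1$ the entries $u^{(n-1)}_{i,j} = Y_{i,j,n-1}$ sit in the $i+j \not\equiv \sigma^{(n-1)}\pmod{2}$ half of the state, i.e.\ in $B(u^{(n-1)},\sigma^{(n-1)})$. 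Hence $B(G^{n-1}(u,\sigma))$ is all $-1$, so $G^{n-1}(u,\sigma) \in V$; the conservation of $\rho$ established just before the theorem promotes this to $G^{n-1}(u,\sigma) \in \overline{V}$. The reverse containment $G^{-(n-1)}(\overline{V}) \subseteq \overline{U}$ follows by applying the identical argument in the opposite time direction, which amounts to reversing the roles of $A$ and $B$.

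The hard part will be the generic definedness condition, i.e.\ producing a single $(u,\sigma) \in \overline{U}$ at which $G^{n-1}$ is defined and $G^{-(n-1)}$ is defined at its image. Tracking the singularity locus of $G^{k}$ through \eqref{eqG} reduces this to showing that the polynomials $F_{i,j,k}$ for $k \leq n-1$ are simultaneously nonzero at some point of $\overline{U}$. Since the $F_{i,j,k}$ are determinants of lifts of consecutive submatrices of $B(u,\sigma)$ and these matrices are $\Lambda$-periodic, Corollary \ref{corDetNeq0} already supplies a generic nonvanishing statement on $U$, and the subtlety is confirming that the hypersurface $\overline{X}_\Lambda \cap U$ is not contained in the zero locus of any $F_{i,j,k}$ with $k < n$. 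I expect this to follow from a rescaling argument exhibiting a one-parameter family inside $U$ on which $\rho$ takes every nonzero value while each $F_{i,j,k}$ ($k<n$) remains a nonzero polynomial; intersecting with $\rho = 1$ then preserves generic nonvanishing.
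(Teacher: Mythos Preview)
Your approach has a genuine gap that the paper's argument sidesteps. By committing up front to $M=n-1$, the step where you deduce $Y_{i,j,n-1}=-1$ from $F_{i,j,n}=0$ via
\[
1+Y_{i,j,n-1}=\frac{F_{i,j,n-2}F_{i,j,n}}{F_{i-1,j,n-1}F_{i+1,j,n-1}}
\]
requires the denominator, and hence the polynomials $F_{i,j,n-1}$, to be generically nonzero on $\overline{U}$. The same nonvanishing for all $k\le n-1$ is exactly what you need for the last condition of Definition~\ref{defDevron}. You recognize this as ``the hard part,'' but the sketch you give does not close it: Corollary~\ref{corDetNeq0} controls the consecutive $M\times M$ minors of the $\Lambda$-periodic matrix $B(u,\sigma)$ itself (with $M$ the minimal axis period, typically much smaller than $n$), not the determinants of the $(k+1)\times(k+1)$ lifts that give the $F_{i,j,k}$. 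In fact, the paper explicitly remarks after the theorem that the equality $N=n-1$ is only supported experimentally; what you are trying to prove in that last paragraph is strictly stronger than the theorem.

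The paper's fix is short but essential: rather than fixing $M=n-1$, it sets $N$ to be the \emph{least} positive integer for which $F_{i,j,N+1}$ vanishes identically on $\overline{U}$. Proposition~\ref{propFEq0} guarantees $N\le n-1$. Minimality of $N$ then gives for free that the $F_{i,j,k}$ with $k\le N$ are generically nonzero on $\overline{U}$, which simultaneously (i) justifies the division yielding $Y_{i,j,N}=-1$, and (ii) supplies the generic definedness and invertibility of $G^N$ needed for the fourth condition. The width is then $N$, and the theorem's ``at most $n-1$'' follows. Your outline becomes correct once you replace the fixed $n-1$ by this minimal $N$ throughout; the rescaling and nonvanishing arguments you propose are then unnecessary.
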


\begin{proof}
It is clear from \eqref{eqG} that $G$ is singular when $u_{i,j}=-1$ for the appropriate parity of $i+j$.  The sets $\overline{U}$ and $\overline{V}$ are defined by certain equations $u_{i,j}=-1$ ensuring a singularity for $G^{-1}$ and $G$ respectively.

By Proposition \ref{propFEq0}, we have that $F_{i,j,n}$ vanishes for $i+j+n$ odd when restricted to $\overline{U}$.  Let $N>0$ be minimal with respect to the property that $F_{i,j,N+1}$ vanishes on $\overline{U}$ for $i+j+N$ even.  Then $N \leq n-1$.  

Let $(u,\sigma) \in \overline{U}$.  Assume without loss of generality that $\sigma = 1$.  Let $Y_{i,j,k}$ be the corresponding solution to \eqref{eqYRec}, i.e. the one with initial conditions $Y_{i,j,-1} = u_{i,j}^{-1}$ for $i+j$ odd and $Y_{i,j,0} = u_{i,j}$ for $i+j$ even.  Let $v = G^N(u)$, which consists of the $Y_{i,j,N-1}$ and $Y_{i,j,N}$.  Then
\begin{displaymath}
\begin{array}{rcll}
0 &= &F_{i-1,j,N}F_{i+1,j,N} + M_{i,j,N}F_{i,j-1,N}F_{i,j+1,N} & \textrm{ (by \eqref{eqF})} \\
-1 &= &\dfrac{M_{i,j,N}F_{i,j-1,N}F_{i,j+1,N}}{F_{i-1,j,N}F_{i+1,j,N}} & \\
&= &Y_{i,j,N} & \textrm{ (by \eqref{eqY})} \\
\end{array}
\end{displaymath}
for all $i+j \equiv N \pmod{2}$.  The division here is justified because minimality of $N$ ensures that the $F_{i,j,N}$ are generically nonzero.  So $v \in \overline{V}$ as desired.

Suppose $(u,\sigma) \in \overline{U}$ is generic.  As just mentioned, the choice of $N$ guarantees that the $F_{i,j,k}$ are nonzero for $k \leq N$.  By \eqref{eqY}, the $Y_{i,j,k}$ are defined for $k \leq N$ so $G^N$ is defined at $(u,\sigma)$.  Moreover, the fact that none of the $F_{i,j,k}$ vanish implies that none of the $Y_{i,j,k}=-1$ for $0 \leq k < n$.  Hence each application of $G$ is defined and reversible.  In particular, $G^N$ is defined on $(u,\sigma)$ and $G^{-N}$ is defined on $G^N(u,\sigma)$.
\end{proof}

\begin{rmk}
Experimental evidence suggests that the width of the Devron pair in Theorem \ref{thmYDevron} is in fact $n-1$, but all that is proven is that $n-1$ is an upper bound.  In words, the Theorem asserts that a layer of $-1$'s in the $Y$-system always reappears after $n-1$ steps or fewer.  To prove the width is exactly $n-1$, it would suffice to find an example input for each lattice $\Lambda$ for which it does actually take $n-1$ steps for this to happen. 
\end{rmk}

\section{The pentagram map and generalizations} \label{secPenta}

\subsection{The pentagram map}
The pentagram map, introduced by Schwartz \cite{S0} is an operation defined for polygons in the plane.  If the original polygon has vertices $A_i$ then the pentagram map takes it to one with vertices
\begin{displaymath}
\meet{\join{A_i}{A_{i+2}}}{\join{A_{i+1}}{A_{i+3}}}.
\end{displaymath}
In words, each new vertex is at the intersection of two consecutive ``shortest'' diagonals of the old polygon (see Figure \ref{figT}).  The pentagram map is denoted $T$.  We will prove a new instance of the Devron property for the pentagram map, adding to preexisting results mentioned in the introduction.

The pentagram map is often considered for twisted polygons.  A \emph{twisted $n$-gon} is a sequence $A=(A_i)_{i \in \mathbb{Z}}$ of points in the projective plane, together with a projective transformation $\phi$ such that
\begin{displaymath}
A_{i+n} = \phi(A_i)
\end{displaymath}
for all $i \in \mathbb{Z}$.  Additionally, assume for each $i$ that the points $A_i, A_{i+1}, A_{i+2}, A_{i+3}$ are in general position (i.e. no three are collinear).  The map $\phi$ is called the \emph{monodromy} of the polygon.  If $\phi$ is the identity then the polygon is called \emph{closed}.  Let $\mathcal{P}_n$ be the space of twisted $n$-gons and $\mathcal{C}_n$ the space of closed $n$-gons.  

When the pentagram map is applied to a twisted polygon, the result may no longer have the property that nearby vertices are in general position.  As such $T$ is only defined generically on $\mathcal{P}_n$.  It maps a twisted polygon to one with the same monodromy.  As such we have rational maps
\begin{displaymath}
T: \mathcal{P}_n \to \mathcal{P}_n
\end{displaymath}
and
\begin{displaymath}
T: \mathcal{C}_n \to \mathcal{C}_n.
\end{displaymath}
Generically, the operation is invertible where the inverse $T^{-1}$ takes a polygon $B$ to one with vertices
\begin{displaymath}
\meet{\join{B_{i}}{B_{i+1}}}{\join{B_{i+2}}{B_{i+3}}}.
\end{displaymath}
The inverse map is illustrated in Figure \ref{figTInv}.

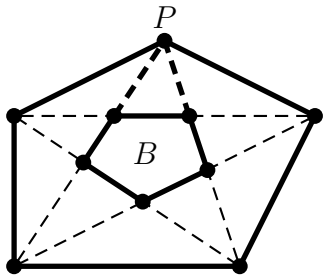
\begin{figure} \label{figTInv}
\begin{pspicture}(5,4)
\rput(0,-.5){
  \pspolygon[showpoints=true,linewidth=2pt](1,1)(4,1)(5,3)(3,4)(1,3)
  \uput[90](3,4){$P$}
  \psline[linestyle=dashed](1.92,2.38)(1,1)(5,3)(1,3)(4,1)(3.57,2.28)
  \pspolygon[showpoints=true,linewidth=2pt](1.92,2.38)(2.71,1.86)(3.57,2.28)(3.33,3)(2.33,3)
  \psline[linestyle=dashed,linewidth=2pt](2.33,3)(3,4)(3.33,3)
  \rput(2.75,2.5){$B$}
}
\end{pspicture}

\caption{A typical vertex $P$ of $T^{-1}(B)$ lies at the intersection of (the extensions of) two sides of $B$.}
\end{figure}

\begin{rmk}
There is a difficulty in indexing when defining the pentagram map.  For instance, if $B=T(A)$ then one must decide to which vertex of $B$ to assign $\meet{\join{A_1}{A_3}}{\join{A_2}{A_4}}$.  One might reasonably choose
\begin{itemize}
\item $B_2$ (the left indexing scheme)
\item $B_3$ (the right indexing scheme) or
\item $B_{2.5}$.
\end{itemize}
The last option is the most symmetric, but leads to an awkward situation in which the polygons alternate each step between being indexed by $\mathbb{Z}$ and $\frac{1}{2} + \mathbb{Z}$.  Our results will not depend on which method is used, so we do not specify one.
\end{rmk}

In \cite{G}, we connected the pentagram map to a certain $Y$-system which can be seen as a specialization of the system considered in the Section \ref{secY}.  If $A$ is a twisted polygon, then certain cross ratios computed from the vertices of the $T^k(A)$ satisfy a family of relations similar to \eqref{eqYRec}.  More precisely, let $A$ be a twisted polygon and let $P_{2j,0} = A_j$ for all $j \in \mathbb{Z}$.  Extend this to an array of points $P_{i,k}$ with $i+k$ even by letting
\begin{displaymath}
P_{i,k} = \meet{\join{P_{i-3,k-1}}{P_{i+1,k-1}}}{\join{P_{i-1,k-1}}{P_{i+3,k-1}}}
\end{displaymath}
for $k > 0 $ and 
\begin{displaymath}
P_{i,k} = \meet{\join{P_{i-3,k+1}}{P_{i-1,k+1}}}{\join{P_{i+1,k+1}}{P_{i+3,k+1}}}
\end{displaymath}
for $k < 0$.  Defined this way, the $P_{i,k}$ are precisely the vertices of $T^k(A)$ for all $k \in \mathbb{Z}$.

Fix $i,k$ with $i+k$ even.  Then diagonal $\join{P_{i-2,k}}{P_{i+2,k}}$ of $T^k(A)$ is used to construct both $P_{i-1,k+1}$ and $P_{i+1,k+1}$.  Hence these four points are collinear.  Define $y_{i,k} = y_{i,k}(A)$ to be their negated cross ratio
\begin{displaymath}
y_{i,k} = -\chi(P_{i-2,k}, P_{i-1,k+1}, P_{i+1,k+1}, P_{i+2,k}).
\end{displaymath}
Here we are defining the cross ratio with the convention 
\begin{displaymath}
\chi(a,b,c,d) = \frac{(a-b)(c-d)}{(a-c)(b-d)}
\end{displaymath}
for scalars $a,b,c,d$ and extending in the natural way to the case of four collinear points in the plane.

\begin{prop}[{\cite[Proposition 2.3]{G}}]
Let $A$ be a generic twisted polygon and compute the $y_{i,k} = y_{i,k}(A)$ as above.  If $i,k \in \mathbb{Z}$ and $i+k$ is odd then
\begin{displaymath}
y_{i,k-1}y_{i,k+1} = \frac{(1+y_{i-3,k})(1+y_{i+3,k})}{(1+y_{i-1,k}^{-1})(1+y_{i+1,k}^{-1})}
\end{displaymath}
Put another way, if $Y_{i,j,k} = y_{i+3j,k}$ for all $i+j+k$ even then the $Y_{i,j,k}$ satisfy the $Y$-system \eqref{eqYRec}.
\end{prop}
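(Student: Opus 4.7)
My plan is to convert every factor in the $Y$-system identity into a cross-ratio of four collinear points, use projections to move each cross-ratio onto a single common line, and then verify the resulting algebraic identity directly. The starting tools are the elementary projective identities $1 - \chi(a,b,c,d) = \chi(a,d,c,b)$ and $\chi(a,b,c,d)^{-1} = \chi(a,c,b,d)$. With the sign convention $y = -\chi$, these let me rewrite each $1 + y_{j,k}$ and each $1 + y_{j,k}^{-1}$ on the right hand side as a single cross-ratio of the four collinear points $P_{j-2,k}, P_{j-1,k+1}, P_{j+1,k+1}, P_{j+2,k}$ taken in an appropriate order. Both sides of the proposed identity thereby become products of cross-ratios of 4-tuples of collinear points built from the iterates $T^{k-1}A, T^kA, T^{k+1}A, T^{k+2}A$.

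Next, I would identify the lines carrying each cross-ratio. The left hand factor $y_{i,k-1}$ lives on $\join{P_{i-2,k-1}}{P_{i+2,k-1}}$, which, since $P_{i\pm 1,k}$ is by construction the intersection of this diagonal with another diagonal of $T^{k-1}A$, also equals $\join{P_{i-1,k}}{P_{i+1,k}}$, a line through two consecutive vertices of $T^kA$. On the right, each of the four factors lives on a diagonal of $T^kA$ passing through $P_{i-1,k}$ or $P_{i+1,k}$, so each of those four lines meets this common line at one of the two distinguished points. The remaining factor $y_{i,k+1}$ lives on the analogous line $\join{P_{i-1,k+2}}{P_{i+1,k+2}}$, which is two pentagram steps removed from the common line.

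The heart of the argument is projecting all these cross-ratios onto the single line $\ell := \join{P_{i-1,k}}{P_{i+1,k}}$. For each of the four right hand factors, the natural projection centers are the outer vertices of $T^kA$ (indices $i\pm 3, i\pm 5$) together with $P_{i,k+1}$; by the very definition of the pentagram map, each projected point should coincide with a point already appearing on $\ell$, namely one of the four points $P_{i-2,k-1}, P_{i-1,k}, P_{i+1,k}, P_{i+2,k-1}$ or an easily identifiable image of one. The factor $y_{i,k+1}$ requires an iterated projection, first back to time $k+1$ via a projection centered at $P_{i,k+1}$ and then down to $\ell$. With all cross-ratios sitting on $\ell$, the $Y$-system identity reduces to a rational identity in the affine parameters of a bounded collection of collinear points, which I would verify by direct computation.

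The main obstacle is the bookkeeping in the projection step: choosing projection centers so that each projected point is identifiable with one already on $\ell$, rather than introducing new free parameters. A careless choice yields an identity in too many variables to verify by hand; the correct choice, suggested by the symmetry of the pentagram construction and the window $\{i-3,\ldots,i+3\}$ of indices appearing in the relation, should force massive cancellation and reduce the verification to an elementary multi-ratio identity such as the five-term relation for cross-ratios on a line.
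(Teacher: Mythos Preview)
The paper does not prove this proposition; it is quoted from \cite{G}.  So there is no in-paper argument to compare against, only the original reference.  That said, let me assess your sketch on its own terms.

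Your opening moves are correct: with the convention $\chi(a,b,c,d) = \frac{(a-b)(c-d)}{(a-c)(b-d)}$ one does have $1-\chi(a,b,c,d)=\chi(a,d,c,b)$ and $\chi(a,b,c,d)^{-1}=\chi(a,c,b,d)$, so each factor $1+y_{j,k}$ and $1+y_{j,k}^{-1}$ is indeed a single cross-ratio of the same four collinear points in a permuted order.  You have also correctly identified that each of the four diagonals carrying the right-hand factors meets the line $\ell=\join{P_{i-1,k}}{P_{i+1,k}}$ at one of its two named points, which is what makes a projection strategy plausible.

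The gap is that you have not actually executed the projection step, and you explicitly flag it as ``the main obstacle.''  That obstacle is the entire content of the proof.  In particular, the factor $y_{i,k+1}$ lives two time-steps away from $\ell$, and your proposed two-stage projection (through $P_{i,k+1}$ and then onto $\ell$) is asserted rather than carried out; it is not clear from your outline which points on $\ell$ the images land on, nor that the resulting identity collapses to something verifiable by hand.  The phrase ``should force massive cancellation'' is a hope, not an argument.  A complete proof along your lines would need to name each projection center, compute each image point on $\ell$ explicitly in terms of the vertices of $T^{k-1}A$ and $T^kA$, and then exhibit the final cancellation.  Until that is done, what you have is a reasonable strategy for a proof, not a proof.

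For comparison, the argument in \cite{G} does not attempt to push everything onto a single line.  It instead introduces intermediate coordinates (essentially the corner invariants $x_i$ of Schwartz) for which both the pentagram map and the $y$-variables have explicit rational expressions, and verifies the recurrence algebraically in those coordinates.  That route trades geometric insight for mechanical certainty; your route, if completed, would be more geometric but requires the careful bookkeeping you have so far only promised.
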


As such, all the $y_{i,k}$ can be determined by the $y_{i,-1}$ and the $y_{i,0}$.  Moreover, these $y$-values are nearly independent.  Define $u_i(A)$ for $i \in \mathbb{Z}$ by
\begin{displaymath}
u_i(A) = \begin{cases}
y_{i,-1}(A)^{-1}, & i \textrm{ odd} \\
y_{i,0}(A), & i \textrm{ even} \\
\end{cases}
\end{displaymath}

\begin{prop}[\cite{G}] 
Let $A$ be a twisted polygon, $u_i = u_i(A)$.  Then
\begin{itemize}
\item $u_{i+2n} = u_i$ for all $i \in \mathbb{Z}$.
\item $u_1u_2\cdots u_{2n} = 1$ but $u_1,u_2,\ldots u_{2n}$ satisfy no other relations.
\item The values $u_1,u_2,\ldots, u_{2n-1}$ are enough to reconstruct $A$ up to a projective transformation and the rescaling operation introduced in \cite{S2}.  Hence the $u_i$ give coordinates on the space of twisted polygons modulo these operations.
\end{itemize}
\end{prop}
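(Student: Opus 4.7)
The first bullet is immediate from the projective equivariance of the pentagram construction.  Since $A_{i+n} = \phi(A_i)$, an induction on $|k|$ gives $P_{i+2n,k} = \phi(P_{i,k})$ for all $i,k$; the cross ratio $y_{i,k}$ involves only points $P_{j,\ell}$ with $j = i + O(1)$, so shifting $i \mapsto i+2n$ applies $\phi$ uniformly to all four arguments.  Cross ratios are projective invariants, so $y_{i+2n,k} = y_{i,k}$, and hence $u_{i+2n} = u_i$.

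For the product identity $u_1 u_2 \cdots u_{2n} = 1$, my first approach would be a direct telescoping: expand each $u_i$ as $(P-Q)(R-S)/((P-R)(Q-S))$ after choosing affine coordinates on each of the relevant lines, and notice that the four points defining $u_i$ share a line and two of their members with those defining $u_{i+2}$, so the factors should pair up around the $2n$-cycle.  A cleaner alternative is to recognize $u_1 \cdots u_{2n}$ as the conserved quantity $\rho$ of Section~\ref{secY}: under the identification $Y_{i,j,k} = y_{i+3j,k}$ the pentagram $u_i$ arise as the distinct entries of a $\Lambda$-periodic $(u,\sigma)$ for a suitable lattice of determinant $n$, whence $\rho = u_1 \cdots u_{2n}$ is a dynamical invariant.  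It then suffices to verify $\rho = 1$ on a single well-chosen polygon, for instance a regular closed $n$-gon where symmetry collapses the cross ratios, and to propagate the value using irreducibility of the space of twisted polygons and conservation of $\rho$.

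For the third bullet, and the ``no other relations'' half of the second, I would combine an explicit reconstruction with a dimension count.  The moduli space $\mathcal{P}_n$ of twisted $n$-gons modulo projective equivalence has dimension $2n$; after also quotienting by the rescaling operation of \cite{S2} the dimension is $2n-1$, matching the hypersurface $\{u_1 \cdots u_{2n} = 1\} \subseteq \mathbb{C}^{2n}$.  To reconstruct, compute $u_{2n}$ from the product relation, normalize a projective frame on $A_0,A_1,A_2,A_3$ plus one further scalar worth of rescaling data, and then propagate: each successive vertex $A_{j+1}$ is determined as the unique fourth collinear point having a prescribed cross ratio with three already-pinned-down points (the needed cross ratio being readable off one of the $u_i$).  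Once all vertices are known, $\phi$ is forced by $A_{i+n} = \phi(A_i)$.  Dominance of this reconstruction onto the hypersurface precludes any further polynomial relation among the $u_i$.

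The main obstacle is the product identity.  Periodicity is immediate from projective equivariance, and reconstruction is a standard normalize-and-propagate argument; but $u_1 u_2 \cdots u_{2n} = 1$ is a genuine geometric statement about cross ratios closing around a cycle, and a direct proof will require either careful telescoping bookkeeping along the sequence of diagonals or, preferably, leveraging the $Y$-system invariant $\rho$ together with a seed verification on a symmetric example.
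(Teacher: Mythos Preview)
The paper does not actually prove this proposition; it is quoted from \cite{G} and stated without proof.  So there is no in-paper argument to compare against, only the original reference.

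That said, let me comment on your sketch.  The first bullet is fine: projective equivariance of the construction plus projective invariance of cross ratios gives periodicity immediately, and this is essentially how it goes in \cite{G}.  The third bullet is also a reasonable outline; the reconstruction-and-dimension-count strategy is standard and is close in spirit to what is done in the source.

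Your ``cleaner alternative'' for the product identity, however, has a genuine gap.  Knowing that $\rho$ is conserved by the pentagram dynamics only tells you $\rho$ is constant along orbits of $G$; it does not tell you $\rho$ is constant on the whole moduli space.  A single orbit is not Zariski-dense, and irreducibility of $\mathcal{P}_n$ by itself cannot upgrade ``$\rho=1$ at one point'' to ``$\rho=1$ everywhere'' without some further input (e.g.\ that $\rho$ is constant on a Zariski-open set, which is exactly what you are trying to prove).  So this route is circular as written.  Your first approach, the direct telescoping of the cross-ratio factors around the $2n$-cycle, is the one that actually works and is essentially what \cite{G} does: one writes the $u_i$ in terms of Schwartz's corner invariants (or equivalent local coordinates) and observes that the product collapses.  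The bookkeeping is not hard once the right coordinates are in hand, so I would drop the $\rho$-invariance detour and commit to the telescoping.
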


\begin{rmk}
The rescaling operation mentioned above plays a central role in establishing integrability of the pentagram map (see \cite{OST}).  For our purposes, however, it is not necessary to keep track of the rescaling parameter.  More precisely, the spaces $U$ and $V$ that make up the Devron pair for the pentagram map will both be invariant under rescaling.
\end{rmk}

Let $Y_{i,j,k} = y_{i+3j,k}$.  Then $Y_{i,j,-1}^{-1} = y_{i+3j,-1}^{-1} = u_{i+3j}$ for $i+j$ odd and $Y_{i,j,0} = y_{i+3j,0} = u_{i+3j}$ for $i+j$ even.  As such, define $u_{i,j} = u_{i+3j}$ for all $i,j \in \mathbb{Z}$.  Part of the $u$-matrix looks like
\begin{displaymath}
\left[
\begin{array}{ccc}
u_0 & u_3 & u_6 \\
u_1 & u_4 & u_7 \\
u_2 & u_5 & u_8 \\
\end{array}
\right].
\end{displaymath}
Decomposing and rotating as in \eqref{eqA} and \eqref{eqB} yields matrices
\begin{displaymath}
\left[
\begin{array}{ccccc}
& \vdots & \vdots & \vdots & \\
\cdots & u_2 & u_6 & u_{10} & \cdots \\
\cdots & u_0 & u_4 & u_8 & \cdots \\
\cdots & u_{-2} & u_2 & u_6 & \cdots \\
& \vdots & \vdots & \vdots & \\
\end{array}
\right]
\end{displaymath}
and
\begin{displaymath}
\left[
\begin{array}{cccc}
& \vdots & \vdots & \\
\cdots & u_3 & u_7 & \cdots \\
\cdots & u_1 & u_5 & \cdots \\
& \vdots & \vdots & \\
\end{array}
\right].
\end{displaymath}

By inspection, these matrices are periodic with respect to the vector $(2,1)$.  Also, since $u_{i+2n} = u_i$ for all $i \in \mathbb{Z}$ they are periodic with respect to $(n,0)$.  As such, $(u,\sigma) \in X_{\Lambda}$ for $X_{\Lambda}$ as defined in \eqref{eqXLambda}, $\sigma=1$, and 
\begin{displaymath}
\Lambda = \lattice{2}{1}{n}{0}.
\end{displaymath}
The $2n$ distinct entries are $u_2,u_4,\ldots, u_{2n}$ together with $u_1,u_3,\ldots, u_{2n-1}$ so 
\begin{displaymath}
\rho(u,\sigma) = u_1u_2u_3\cdots u_{2n} = 1.
\end{displaymath}
Therefore $(u,\sigma) \in \overline{X}_{\Lambda}$ with $\overline{X}_{\Lambda}$ as defined in \eqref{eqXLambdaBar}.

To sum up, the pentagram map on twisted $n$-gons can be described in certain coordinates as the map $G$ restricted to $\overline{X}_{\Lambda}$ where $\Lambda = \lattice{2}{1}{n}{0}$.  By Theorem \ref{thmYDevron}, the Devron property holds for $G$ so the same result follows for the pentagram map.  The singular sets $U$ and $V$ have a simple geometric interpretation.  Recall $(u,\sigma) \in U$ if and only if $u_{i,j} = -1$ for all $i+j \equiv \sigma \pmod{2}$, and $V$ is defined by setting the other half of the $u_{i,j}$ to $-1$.  To translate this to polygons, define $U,V \subseteq \mathcal{P}_n$ by
\begin{align*}
U &= \{A \in \mathcal{P}_n : u_{2i+1}(A) = -1 \textrm{ for all } i\in \mathbb{Z}\} \\
V &= \{A \in \mathcal{P}_n : u_{2i}(A) = -1 \textrm{ for all } i \in \mathbb{Z}\}
\end{align*}

\begin{prop}[{\cite[Lemma 7.2]{G}}]
Let $A \in \mathcal{P}_n$.  Then
\begin{itemize}
\item $A \in U$ if and only if the sides $\join{A_{2i}}{A_{2i+1}}$ for $i \in \mathbb{Z}$ are concurrent and the sides $\join{A_{2i-1}}{A_{2i}}$ for $i \in \mathbb{Z}$ are also concurrent.
\item $A \in V$ if and only if the vertices $A_{2i}$ for $i \in \mathbb{Z}$ are collinear and the vertices $A_{2i+1}$ for $i \in \mathbb{Z}$ are also collinear.
\end{itemize}
\label{propAxisAligned}
\end{prop}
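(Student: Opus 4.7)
The plan is to unwind the definition of $u_i(A)$ in terms of cross ratios on lines determined by $A$, and then read off the geometric meaning of the condition $u_i = -1$.

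First I would expand the $P_{i,k}$ via the recursion used to define them. One finds that $P_{2i-1,1}$ and $P_{2i+1,1}$ lie on the diagonal $\ell = \join{A_{i-1}}{A_{i+1}}$ as the intersections of $\ell$ with the diagonals $\join{A_{i-2}}{A_i}$ and $\join{A_i}{A_{i+2}}$, respectively, so that
\[
u_{2i}(A) = -\chi(A_{i-1}, P_{2i-1,1}, P_{2i+1,1}, A_{i+1})
\]
is a cross ratio on $\ell$. Dually, $P_{2i-1,-1} = \meet{\join{A_{i-2}}{A_{i-1}}}{\join{A_i}{A_{i+1}}}$ and $P_{2i+3,-1} = \meet{\join{A_i}{A_{i+1}}}{\join{A_{i+2}}{A_{i+3}}}$ both lie on the side $\join{A_i}{A_{i+1}}$, and $u_{2i+1}(A)$ is the reciprocal of the analogous cross ratio on that side.

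Next I would invoke the algebraic identity $\chi(a,b,c,d)=1 \Leftrightarrow (a-d)(c-b)=0$. Setting $u_i = -1$ forces $\chi = 1$, which in turn forces one of two coincidences among the four collinear points. For $V$ the alternatives are $A_{i-1}=A_{i+1}$ (ruled out by general position) and $P_{2i-1,1}=P_{2i+1,1}$; in the surviving case the common point lies on both $\join{A_{i-2}}{A_i}$ and $\join{A_i}{A_{i+2}}$, and general position of $A_{i-1},A_i,A_{i+1}$ prevents this point from being $A_i$, so the two diagonals must coincide and $A_{i-2}, A_i, A_{i+2}$ are collinear. For $U$ the parallel analysis gives that the three sides $\join{A_{i-2}}{A_{i-1}}, \join{A_i}{A_{i+1}}, \join{A_{i+2}}{A_{i+3}}$ are concurrent.

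The global conclusion then follows by chaining. For $V$, collinearity of $A_{i-2}, A_i, A_{i+2}$ for every $i$ immediately forces all even-indexed vertices onto one line and all odd-indexed onto another. For $U$, the concurrent-triple conditions at $i$ and $i+2$ share two sides, so the concurrency points agree and propagate, yielding one common point for all sides of the form $\join{A_{2i}}{A_{2i+1}}$ and another for all sides of the form $\join{A_{2i-1}}{A_{2i}}$. The converses reduce to a direct verification that, for example, when $A$ has vertices on two lines $\ell_1,\ell_2$, both $P_{2i-1,1}$ and $P_{2i+1,1}$ degenerate to the single point $\ell_1 \cap \ell_2$, so the cross-ratio formally evaluates to $1$.

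The hard part will be handling the fact that on $U$ and $V$ themselves the cross ratios above reduce to $0/0$: the auxiliary points $P_{2i\pm 1, \pm 1}$ are intersections of lines that collapse onto one another in these configurations. I would therefore treat $u_i=-1$ as cutting out a Zariski closed subvariety of the polygon space and verify the claimed inclusions via one-parameter limits into $U$ or $V$, checking in each case that the cross ratio tends to $1$ (rather than $0$ or $\infty$) along a generic approach.
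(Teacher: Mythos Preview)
The paper does not prove this proposition; it is quoted from the earlier paper \cite{G} and stated without argument here, so there is no in-paper proof to compare against.

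Your direct approach is essentially correct and is in fact close to how the result is established in \cite{G}: expand the $P_{i,\pm 1}$ in terms of the $A_j$, identify each $u_i=-1$ with a cross ratio equal to $1$, use $\chi(a,b,c,d)=1 \Leftrightarrow (a-d)(b-c)=0$ to extract a local incidence (three diagonals concurrent, or three alternating vertices collinear), rule out the degenerate branch via the general-position hypothesis on four consecutive vertices, and then chain the local incidences into the global axis-aligned or dual axis-aligned condition.

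One correction: your final paragraph about $0/0$ is unnecessary and slightly misleading. On $U$ and $V$ the relevant lines do \emph{not} collapse onto one another; they remain distinct and their intersection points are perfectly well defined. What happens is that two of the four points in the cross ratio coincide (e.g.\ $P_{2i-1,-1}=P_{2i+3,-1}=P$ on an axis-aligned polygon, or $P_{2i-1,1}=P_{2i+1,1}=L_0\cap L_1$ on a dual axis-aligned polygon). In the configurations $\chi(a,b,b,d)$ or $\chi(a,b,c,a)$ with the remaining three points distinct, the cross ratio is unambiguously $1$, so $u_i=-1$ holds on the nose rather than only in a limit. You can therefore drop the limiting argument and simply verify the converse directly.
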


Say that $A$ is \emph{axis-aligned} if half of its sides pass through one point $P$ and the other half pass through another point $Q$ as above.  The reason for the terminology is that if we choose $P$ and $Q$ as certain points at infinity then the sides of $A$ will alternate between being parallel to the $x$ and $y$ axes.  Say that $A$ is \emph{dual axis-aligned} if half its sides lie on one line and the other half lie on another.  In light of Proposition \ref{propAxisAligned} we can switch to geometric definitions of $U$ and $V$.

\begin{thm}
Define $U,V \subseteq \mathcal{P}_n$ by 
\begin{align*}
U &= \{A : A \textrm{ axis-aligned}\} \\
V &= \{A : A \textrm{ dual axis-aligned}\}
\end{align*}
Then $(U,V)$ is a Devron pair for the pentagram map.  The width of the pair is at most $n-1$.
\label{thmPenta}
\end{thm}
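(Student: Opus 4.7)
The plan is to invoke Theorem \ref{thmYDevron} for the lattice $\Lambda = \lattice{2}{1}{n}{0}$, which has $\det(\Lambda) = n$. The material preceding the theorem statement already establishes that in the $u$-coordinates the pentagram map $T$ on $\mathcal{P}_n$ is conjugate (modulo projective transformations and the rescaling operation of \cite{S2}) to the restriction of $G$ to $\overline{X}_\Lambda$; the relation $u_1 u_2 \cdots u_{2n} = 1$ coincides exactly with the condition $\rho(u,1) = 1$ that cuts out $\overline{X}_\Lambda$.

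First I would translate the two geometric conditions of the theorem into conditions on $(u,1)$. By Proposition \ref{propAxisAligned}, $A \in U$ if and only if $u_{2i+1}(A) = -1$ for all $i$, which says that the matrix $A(u,1)$ defined in Section \ref{secY} is identically $-1$; similarly $A \in V$ if and only if $B(u,1)$ is identically $-1$. Thus the preimages of $U$ and $V$ under the coordinate map $A \mapsto (u(A),1)$ coincide with the sets $\overline{U}$ and $\overline{V}$ appearing in Theorem \ref{thmYDevron}. That theorem then supplies a Devron pair $(\overline{U},\overline{V})$ for $G$ on $\overline{X}_\Lambda$ of width at most $n-1$.

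Next I would transport each bullet of Definition \ref{defDevron} back to $T$ on $\mathcal{P}_n$. The two singularity bullets need a brief direct geometric check: for $A \in U$ with sides passing through points $P$ and $Q$, consecutive sides $\join{A_i}{A_{i+1}}$ and $\join{A_{i+2}}{A_{i+3}}$ share one of these points, so the vertices of $T^{-1}(A)$ alternate between $P$ and $Q$, forcing every side of $T^{-1}(A)$ onto the line $\join{P}{Q}$ and rendering $T^{-2}$ undefined at $A$. Dually, for $A \in V$ with vertices alternating between two lines, every shortest diagonal of $A$ coincides with one of those lines, so $T(A)$ collapses to the single intersection point and $T^2$ is undefined at $A$. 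The containments $T^M(U) \subseteq V$ and $T^{-M}(V) \subseteq U$ for some $M \leq n-1$ then follow at once from the corresponding containments for $G$, since $U$ and $V$ are saturated under the kernel (projective and rescaling) of the coordinate map.

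The main obstacle is the fourth bullet of Definition \ref{defDevron}: one needs a generic $A \in U$ at which $T^M$ is defined and $T^{-M}$ is defined at $T^M(A)$. Theorem \ref{thmYDevron} supplies such a point only at the level of $u$-coordinates, on the quotient by the projective and rescaling symmetries. To lift it, I would use that $A \mapsto (u(A),1)$ is a dominant rational map with irreducible generic fibers, so a Zariski-open locus of $\overline{U}$ on which the relevant iterates of $G$ are defined pulls back to a Zariski-open locus of $U$ on which the corresponding iterates of $T$ are defined. Selecting any single $A$ in this locus verifies the final condition and inherits the width bound $n-1$, completing the proof.
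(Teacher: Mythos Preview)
Your proposal is correct and follows essentially the same approach as the paper: invoke Theorem \ref{thmYDevron} for the lattice $\Lambda = \lattice{2}{1}{n}{0}$, using the identification of the pentagram map with the $Y$-system map $G$ on $\overline{X}_\Lambda$ and Proposition \ref{propAxisAligned} to match $U,V$ with $\overline{U},\overline{V}$. The paper's own proof is a two-sentence appeal to this specialization, whereas you supply additional detail---a direct geometric verification of the singularity bullets for $T^{-1}$ on $U$ and $T$ on $V$, and an explicit discussion of lifting the genericity condition from the $u$-coordinate quotient back to $\mathcal{P}_n$---that the paper leaves implicit.
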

\begin{proof}
In light of the identification of the pentagram map with a $Y$-system, this is just a specialization of Theorem \ref{thmYDevron}.  The periodicity lattice used in the $Y$-system is $\lattice{2}{1}{n}{0}$ which has determinant $n$ leading to the bound on the width.
\end{proof}

Axis-aligned polygons have qualitative differences depending on the parity of $n$.  Let $A$ be an axis-aligned twisted $n$-gon with monodromy $\phi$.  Let $P,Q$ be the points of concurrency of its sides, say with the $\join{A_{2i-1}}{A_{2i}}$ passing through $P$.  Generically, $A_1,A_2,A_3,A_4$ are in general position and determine $P$ as
\begin{displaymath}
P = \meet{\join{A_1}{A_2}}{\join{A_3}{A_4}}.
\end{displaymath}
Recall $\phi(A_i) = A_{i+n}$ so
\begin{displaymath}
\phi(P) = \meet{\join{A_{n+1}}{A_{n+2}}}{\join{A_{n+3}}{A_{n+4}}}.
\end{displaymath}

First suppose $n$ is even.  Then $\join{A_{n+1}}{A_{n+2}}$ and $\join{A_{n+3}}{A_{n+4}}$ both pass through $P$ so $\phi(P)=P$.  A similar argument shows that $\phi(Q)=Q$.  By Theorem \ref{thmPenta}, there is some $k \leq n-1$ such that $T^k(A)$ is dual axis-aligned.  This differs from two previous results that consider only a subclass of the axis-aligned polygons.  The first result is that if $A$ is closed (i.e. $\phi$ is the identity map) then $T^k(A)$ becomes dual axis-aligned in at most $k=n/2-2$ steps \cite{S2, G}.  The second allows $\phi$ to be nontrivial provided that it fixes every point on the line $\join{P}{Q}$.  In this case $T^k(A)$ is axis-aligned for some $k \leq n/2-1$ steps \cite{G}.  

If $n$ is odd, then $\join{A_{n+1}}{A_{n+2}}$ and $\join{A_{n+3}}{A_{n+4}}$ both pass through $Q$ so by the above $\phi(P)=Q$.  By a similar argument $\phi(Q) = P$.  The fact that $\phi$ interchanges $P$ and $Q$ rules out the possibility that $A$ is closed.  Theorem \ref{thmPenta} is the first Devron type result for the pentagram map that includes polygons with an odd number of sides.

\subsection{Higher pentagram maps} \label{subsecHigher}
Gekhtman, Shapiro, Tabachnikov, and Vainshtein \cite{GSTV} consider a higher dimensional generalization of the pentagram map.  The objects of study are certain polygonal curves in $\mathbb{RP}^k$.  We generalize the notion of axis-aligned polygons to this setting and prove a Devron type result.

In $\mathbb{RP}^d$, a \emph{twisted $n$-gon} is a sequence of vertices $(A_i)_{i \in \mathbb{Z}}$ together with a projective transformation $\phi \in PGL_{d+1}(\mathbb{R})$ such that
\begin{displaymath}
A_{i+n} = \phi(A_i)
\end{displaymath}
for all $i$.  Assume for the definition that no three of $A_i, A_{i+1}, A_{i+d}, A_{i+d+1}$ are collinear for any $i$.  Such a polygon is called \emph{corrugated} if for all $i \in \mathbb{Z}$ the points
\begin{displaymath}
A_i, A_{i+1}, A_{i+d}, A_{i+d+1}
\end{displaymath}
are coplanar.  Let $\mathcal{P}_{d,n}$ be the space of twisted $n$-gons in $\mathbb{RP}^d$ and let $\mathcal{P}_{d,n}^0$ be the space of such $n$-gons that are corrugated.  Note in $\mathbb{RP}^2$ the coplanarity conditions are vacuous so $\mathcal{P}_{2,n}^0 = \mathcal{P}_{2,n}$ and is the same as the space of twisted $n$-gons considered in the previous subsection.  Also note that \cite{GSTV} uses the subscript $d+1$ instead of $d$ for the spaces so as to match the dimensions of the matrix $\phi$.

In \cite{GSTV}, Gekhtman, Shapiro, Tabachnikov, and Vainshtein define an operation on corrugated twisted polygons that they call higher pentagram maps.  Let $A \in \mathcal{P}_{d,n}^0$.  For each $i \in \mathbb{Z}$, the four points $A_i, A_{i+1}, A_{i+d}, A_{i+d+1}$ are coplanar, so the lines $\join{A_{i}}{A_{i+d}}$ and $\join{A_{i+1}}{A_{i+d+1}}$ intersect.  Define $T_d(A)$ to be the twisted polygon with vertices 
\begin{displaymath}
\ldots, \meet{\join{A_0}{A_d}}{\join{A_1}{A_{d+1}}}, \meet{\join{A_1}{A_{d+1}}}{\join{A_2}{A_{d+2}}}, \meet{\join{A_2}{A_{d+2}}}{\join{A_3}{A_{d+3}}}, \ldots.
\end{displaymath}
\begin{lem}[{\cite[Theorem 5.1]{GSTV}}]
If $A$ is a corrugated polygon then so is $T_d(A)$.
\end{lem}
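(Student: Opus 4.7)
The key idea is that consecutive vertices of $T_d(A)$ automatically share a line, regardless of any corrugation hypothesis, so the corrugation of $T_d(A)$ reduces to showing that two such auxiliary lines are coplanar in $\mathbb{RP}^d$.

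Write $B_i = \meet{\join{A_i}{A_{i+d}}}{\join{A_{i+1}}{A_{i+d+1}}}$ for the vertices of $T_d(A)$, with the indexing convention used in the displayed formula. By construction, $B_i$ lies on the line $\ell(i) := \join{A_{i+1}}{A_{i+d+1}}$, and likewise $B_{i+1}$, being the next vertex, is the intersection of $\join{A_{i+1}}{A_{i+d+1}}$ with $\join{A_{i+2}}{A_{i+d+2}}$, so also lies on $\ell(i)$. Thus $B_i$ and $B_{i+1}$ always lie on $\ell(i)$, and similarly $B_{i+d}, B_{i+d+1}$ always lie on $\ell(i+d)$.

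To establish the corrugation of $T_d(A)$, I must show that the four points $B_i, B_{i+1}, B_{i+d}, B_{i+d+1}$ are coplanar. By the observation above it suffices to show that the two lines $\ell(i)=\join{A_{i+1}}{A_{i+d+1}}$ and $\ell(i+d)=\join{A_{i+d+1}}{A_{i+2d+1}}$ are coplanar. But both lines pass through the common vertex $A_{i+d+1}$, so they are either equal or span a projective plane; in either case they are coplanar. Hence $B_i, B_{i+1}, B_{i+d}, B_{i+d+1}$ all lie in this plane.

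There is essentially no obstacle here, which is reassuring: the statement is really a structural consequence of the definition of $T_d$ together with the identification of the auxiliary lines $\ell(i)$. The only point meriting a brief check is that the construction of the $B_i$ was legitimate in the first place, which is exactly what the corrugation hypothesis on $A$ provides (it guarantees that consecutive diagonals $\join{A_i}{A_{i+d}}$ and $\join{A_{i+1}}{A_{i+d+1}}$ actually meet). One should also note that the non-degeneracy condition for $T_d(A)$ to be a legitimate element of $\mathcal{P}_{d,n}^0$ holds generically, since the relevant determinantal conditions are open.
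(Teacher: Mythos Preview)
Your argument is correct. The paper itself does not prove this lemma; it merely quotes it from \cite{GSTV}, so there is no in-paper proof to compare against. Your observation that consecutive vertices $B_i,B_{i+1}$ of $T_d(A)$ lie on the common diagonal $\ell(i)=\join{A_{i+1}}{A_{i+d+1}}$, and that $\ell(i)$ and $\ell(i+d)$ share the vertex $A_{i+d+1}$, is exactly the standard reason the corrugation condition propagates, and it is essentially the argument in \cite{GSTV} as well. Your final remark that the genericity/non-degeneracy needed for $T_d(A)\in\mathcal{P}_{d,n}^0$ is an open condition is the right way to handle that caveat.
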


Hence we have a sequence of maps $T_d : \mathcal{P}_{d,n}^0 \to \mathcal{P}_{d,n}^0$ for $d=2,3,4,\ldots$ and the map $T_2$ is just the ordinary pentagram map.  One can check that this operation is invertible and that $T_d^{-1}$ takes a polygon $B$ to one with vertices
\begin{displaymath}
\meet{\join{A_i}{A_{i+1}}}{\join{A_{i+d}}{A_{i+d+1}}}
\end{displaymath}
for $i \in \mathbb{Z}$.

We now generalize the content of the previous subsection.  Let $A \in \mathcal{P}_{d,n}^0$.  The vertices of the $T_d^k(A)$ can be placed on a lattice, but the nature of the lattice depends on the parity of $d$.  First suppose $d$ is even.  Let $P_{2i,0} = A_i$ for all $i \in \mathbb{Z}$.  Recursively define
\begin{equation}
P_{i,k} = \meet{\join{P_{i-d-1,k-1}}{P_{i+d-1,k-1}}}{\join{P_{i-d+1,k-1}}{P_{i+d+1,k-1}}}
\label{eqTd}
\end{equation}
for $k>0$ and
\begin{equation}
P_{i,k} = \meet{\join{P_{i-d-1,k+1}}{P_{i-d+1,k+1}}}{\join{P_{i+d-1,k+1}}{P_{i+d+1,k+1}}}
\label{eqTdInv}
\end{equation}
for $k < 0$.  The $P_{i,k}$ are defined for $i+k$ even.

If $d$ is odd, then let $P_{2i+1,0} = A_i$ for $i \in \mathbb{Z}$.  Define the $P_{i,k}$ for $k>0$ and $k<0$ using \eqref{eqTd} and \eqref{eqTdInv}.  The difference is that in this case $P_{i,k}$ is defined for $i$ odd.  Regardless of $d$, the $P_{i,k}$ for $k$ fixed give the vertices of $T_d^k(A)$.

If $i,d$ are such that $P_{i-d,k}$ and $P_{i+d,k}$ are defined, then two vertices of $T_d^{k+1}(A)$ lie on the $d$-diagonal $\join{P_{i-d,k}}{P_{i+d,k}}$, namely $P_{i-1,k+1}$ and $P_{i+1,k+1}$.  Define $y_{i,k} = y_{i,k}(A)$ by
\begin{equation}
y_{i,k} = -\chi(P_{i-d,k}, P_{i-1,k+1}, P_{i+1,k+1}, P_{i+d,k}).
\label{highery}
\end{equation}
If $d$ is even then $y_{i,k}$ is defined for $i+k$ even and if $d$ is odd then $y_{i,k}$ is defined for $i$ even.

\begin{prop}[\cite{GSTV}]
Let $A \in \mathcal{P}_{d,n}^0$, and $y_{i,k} = y_{i,k}(A)$.  Then
\begin{itemize}
\item $y_{i+2n,k} = y_{i,k}$ for all appropriate $i,k$.
\item The product of the distinct $y_{i,-1}^{-1}$ and the distinct $y_{i,0}$ equals 1.  More precisely, if $d$ is even then 
\begin{displaymath}
y_{1,-1}^{-1}y_{2,0}y_{3,-1}^{-1}y_{4,0}\cdots y_{2n,0} = 1
\end{displaymath}
and if $d$ is odd then
\begin{displaymath}
y_{2,-1}^{-1}y_{2,0}y_{4,-1}^{-1}y_{4,0} \cdots y_{2n,0} = 1
\end{displaymath}
\item The distinct $y_{i,-1}$ and $y_{i,0}$ determine $A$ up to projective equivalence and a rescaling operation.
\end{itemize}
\label{propHigheru}
\end{prop}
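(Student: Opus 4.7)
The statement collects three assertions that I would treat in turn.

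The first — periodicity of $y_{i,k}$ in $i$ with period $2n$ — follows from projective equivariance of the entire construction. Each of the recursions \eqref{eqTd} and \eqref{eqTdInv} commutes with applying a fixed $\psi \in PGL_{d+1}(\mathbb{R})$ to every $P_{i,k}$ simultaneously. Because $P_{i+2n,0}=\phi(P_{i,0})$ by the monodromy relation on $A$, induction on $|k|$ gives $P_{i+2n,k}=\phi(P_{i,k})$ everywhere it is defined, and then invariance of the cross ratio under $\phi$ yields $y_{i+2n,k}=y_{i,k}$.

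For the product relation I would give a telescoping argument. Fix lifts $\tilde P_{i,k}\in\mathbb{R}^{d+1}$, and for each factor $y_{i,k}$ in the product write the two middle points as linear combinations of the outer two along their common line: if $\tilde P_{i-1,k+1}=\alpha\tilde P_{i-d,k}+\beta\tilde P_{i+d,k}$ and $\tilde P_{i+1,k+1}=\gamma\tilde P_{i-d,k}+\delta\tilde P_{i+d,k}$, then $y_{i,k}=-\alpha\delta/(\beta\gamma)$. Substituting into the product over one period and regrouping, each lift contributes to numerator and denominator with the same total multiplicity, so the product collapses to $\pm 1$; a sign count on the number of factors (or a check on a single explicit polygon) pins the value to $+1$.

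The third claim is the most substantive, and I would combine a dimension count with an explicit inverse construction. Counting parameters, $\dim\mathcal{P}_{d,n}^0 = dn - n(d-2) + \bigl((d+1)^2-1\bigr) = 2n + (d+1)^2 - 1$: each vertex in $\mathbb{RP}^d$ contributes $d$ parameters, each corrugation condition cuts down by $d-2$ (coplanarity of four generic points in $\mathbb{RP}^d$ has codimension $d-2$), and the monodromy $\phi$ contributes $(d+1)^2-1$. Quotienting by projective equivalence and the one-parameter rescaling of \cite{S2} removes $(d+1)^2$ dimensions, leaving exactly $2n-1$, which matches the number of $y$-coordinates modulo the single product relation. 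To upgrade this count to a coordinate statement, normalize a projective frame on the first $d+2$ vertices of $A$; corrugation then confines each subsequent $A_{i+d+1}$ to the $2$-plane through $A_i, A_{i+1}, A_{i+d}$, and its remaining one-dimensional position on the line $\join{A_{i+1}}{A_{i+d+1}}$ is determined by one cross ratio — namely one of the listed $y$-variables. Iterating reconstructs $A$ up to the rescaling ambiguity.

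The main obstacle is the third part: one must verify that this greedy vertex-by-vertex reconstruction is consistent all the way around the polygon, i.e. that an arbitrary tuple $(u_i)$ with $\prod u_i = 1$ actually comes from a corrugated twisted polygon, with the monodromy condition $A_{i+n}=\phi(A_i)$ closing up correctly. The cleanest route is the cluster-algebraic one of \cite{GSTV}, which identifies the $y_{i,k}$ as the $y$-variables of an $X$-cluster seed on the relevant quotient of $\mathcal{P}_{d,n}^0$; this perspective simultaneously explains the product relation in part 2 and the coordinate property in part 3, and removes the need to check global consistency by hand.
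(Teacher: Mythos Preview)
The paper does not prove this proposition at all; it is quoted from \cite{GSTV} and stated without argument. So there is no in-paper proof to compare against beyond the bare citation, and in that sense your final move --- deferring the substantive third part to \cite{GSTV} --- is exactly what the paper does.

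That said, your explicit reconstruction sketch for the third part has a genuine gap. You write that corrugation confines $A_{i+d+1}$ to the $2$-plane through $A_i,A_{i+1},A_{i+d}$, and that ``its remaining one-dimensional position on the line $\join{A_{i+1}}{A_{i+d+1}}$ is determined by one cross ratio.'' This is circular: the line $\join{A_{i+1}}{A_{i+d+1}}$ is defined using the very point you are trying to place. It is also numerically off: a point in a $2$-plane has two degrees of freedom, and when building forward only the single corrugation condition $(i,i+1,i+d,i+d+1)$ constrains $A_{i+d+1}$ (the next one involves the still-unknown $A_{i+d+2}$), so one cross ratio cannot pin it down. Your own dimension count --- $2n-1$ parameters for essentially $n$ vertices --- already signals that two $y$-values per vertex are consumed, not one. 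Finally, the listed $y_{i,-1}$ and $y_{i,0}$ are cross ratios involving vertices of $T_d^{-1}(A)$ and $T_d(A)$ as well as $A$ itself, so a purely forward greedy reconstruction of $A$ alone from them is not as direct as your sketch suggests; the argument in \cite{GSTV} passes through different coordinates. Your sketches for parts one and two are fine in outline, though the telescoping in part two is a promissory note rather than a verification.
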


\begin{prop}[\cite{GSTV}]
Let $A \in \mathcal{P}_{d,n}^0$ and let $y_{i,k} = y_{i,k}(A)$ as above.  Then
\begin{displaymath}
y_{i,k-1}y_{i,k+1} = \frac{(1+y_{i-d-1,k})(1+y_{i+d+1,k})}{(1+y_{i-d+1,k}^{-1})(1+y_{i+d-1,k}^{-1})}.
\end{displaymath}
Put another way, if $Y_{i,j,k} = y_{(d-1)i+(d+1)j,k}$ for all $i+j+k$ even then the $Y_{i,j,k}$ satisfy \eqref{eqYRec}.
\end{prop}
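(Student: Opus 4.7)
The plan is to mimic the proof of the $d=2$ (ordinary pentagram) case from \cite[Proposition 2.3]{G}, exploiting the corrugated condition to reduce the effective geometry to a planar configuration. I would begin by laying out the six cross ratios appearing in the claimed identity and recording, for each, the collinear quadruple that defines it together with the $d$-diagonal on which those four points lie. By \eqref{eqTd} each vertex $P_{\ell,k+1}$ is realized as an intersection of two $d$-diagonals at level $k$, so the cross ratios at level $k+1$ entering $y_{i,k+1}$ admit a description in terms of such intersections at level $k$; similarly $y_{i,k-1}$ can be pushed onto level $k$ via \eqref{eqTdInv}.

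The second step is to deploy corrugation. Since $T_d$ preserves corrugation, at every iterate one has coplanar quadruples of the form $P_{j,k}, P_{j+1,k}, P_{j+d,k}, P_{j+d+1,k}$. Within any such plane, central projection preserves cross ratio, so a cross ratio along one $d$-diagonal can be transported to a cross ratio along a neighboring $d$-diagonal. Chaining these projections, I would rewrite every factor on both sides of the asserted relation as a cross ratio of points lying in a single common plane. Once the configuration is planar, the identity reduces to a statement about cross ratios of intersections of lines in the plane---essentially the $d=2$ case already handled in \cite{G}, and provable directly by placing the relevant points in affine coordinates on a fixed line, expanding using the multiplicative property of the cross ratio along a pencil of lines, and applying the elementary identity $1 + y = (a-d)(b-c)/((a-c)(b-d))$ when $y = -\chi(a,b,c,d)$. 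The reformulation as \eqref{eqYRec} is then a one-line check: the substitution $Y_{i,j,k} = y_{(d-1)i+(d+1)j,k}$ converts the unit shifts $i \pm 1, j \pm 1$ of the $Y$-indices into the shifts $\pm(d-1), \pm(d+1)$ of the $y$-index, matching the pattern on the right-hand side of the displayed equation.

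The main obstacle is the second step: identifying the precise chain of coplanarities and projections needed to bring every factor into a common plane, and tracking signs and orientations carefully through each projection. Corrugation supplies coplanarity only for very particular quadruples, and the six cross ratios in question sit on $d$-diagonals at two successive levels, so the chain of projections can be long and bookkeeping-heavy. A conceptually cleaner alternative---and likely closer to the approach of \cite{GSTV} themselves---would be to bypass the geometric reduction entirely and invoke the cluster-algebra description of the higher pentagram map: in that framework the claimed identity is the standard $y$-seed mutation formula, so the proof reduces to matching the exchange quiver of the relevant seed with the combinatorics of the $d$-diagonals, after which \eqref{eqYRec} becomes a direct instance of Y-pattern mutation.
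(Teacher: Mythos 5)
The paper does not prove this proposition --- it is stated with the attribution $[\text{GSTV}]$ and imported as a black box, so there is no internal proof to compare your argument against. Judging your sketch on its own merits, the geometric route has a genuine gap that you yourself flag but do not close: you never specify the chain of coplanar quadruples and central projections, and it is not clear the chain closes up as imagined. Corrugation makes the quadruple $P_{j,k},P_{j+1,k},P_{j+d,k},P_{j+d+1,k}$ coplanar, which puts each $d$-diagonal of $T_d^k(A)$ in a common plane with its two neighbors, but the six cross ratios in the identity sit on $d$-diagonals spread across three consecutive iterates $k-1,k,k+1$ and across index shifts of size $d\pm 1$; stringing together enough two-plane projections to land all six in one plane, while tracking which collinear quadruple each projection carries to which, is exactly the part of the argument that is doing the work, and it is missing. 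As written this is a plan for a proof rather than a proof.

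Your second suggestion --- invoke the cluster/$Y$-pattern formalism --- is the one that actually corresponds to how Gekhtman, Shapiro, Tabachnikov, and Vainshtein establish the relation: they encode the higher pentagram map on corrugated polygons via weighted directed networks on a cylinder, identify the $y_{i,k}$ with $Y$-variables of an associated cluster seed, and read the recurrence off from the standard $y$-mutation rule. Even there the nontrivial content is verifying that the exchange matrix of that seed has exactly the incidence pattern that makes $Y_{i,j,k}=y_{(d-1)i+(d+1)j,k}$ satisfy \eqref{eqYRec}; your final "one-line check" of the index substitution is correct, but it presupposes that the exchange graph really is the one with arrows linking a mutable vertex to its neighbors at offsets $\pm(d-1)$ and $\pm(d+1)$, and that is the piece you would need to exhibit. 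So the second route is the right one but is also only a pointer, not yet a proof.
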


Let $Y_{i,j,k} = y_{(d-1)i+(d+1)j,k}$.  Then
\begin{displaymath}
Y_{i+(d+1),j-(d-1),k} = Y_{i,j,k}
\end{displaymath}
and
\begin{displaymath}
Y_{i+n,j-n,k} = y_{(d-1)i + (d+1)j - 2n,k} = y_{(d-1)i + (d+1)j,k} = Y_{i,j,k}
\end{displaymath}
So for fixed $k$ the $Y_{i,j,k}$ are periodic with respect to 
\begin{displaymath}
\Lambda' = \lattice{d+1}{-(d-1)}{n}{-n}.
\end{displaymath}  
This is the companion of the lattice $\Lambda = \lattice{d}{1}{n}{0}$.  

Letting $u_{i,j} = Y_{i,j,0}$ for $i+j$ even and $u_{i,j} = Y_{i,j,-1}^{-1}$ for $i+j$ odd we have $(u,\sigma) \in X_{\Lambda}$ for $\Lambda = \lattice{d}{1}{n}{0}$.  There are $2\det(\Lambda) = 2n$ distinct $u_{i,j}$, and by the second part of Proposition \ref{propHigheru} their product is 1.  Therefore $(u,\sigma) \in \overline{X}_{\Lambda}$.  In short, the higher pentagram map $T_d$ is given in certain coordinates by the periodic $Y$-system map 
\begin{displaymath}
G: \overline{X}_{\Lambda} \to \overline{X}_{\Lambda}
\end{displaymath}
for $\Lambda = \lattice{d}{1}{n}{0}$.

We immediately get the Devron property for higher pentagram maps.  Again, the classes of singular polygons can be described geometrically.  Let $A \in \mathcal{P}_{d,n}$.  Say $A$ is \emph{axis-aligned} if there are points $Q_1,Q_2,\ldots, Q_d$ such that the side $\join{A_{i+jd}}{A_{i+jd+1}}$ goes through $Q_i$ for all $i=1,2,\ldots, d$, $j \in \mathbb{Z}$.  We can choose the points of concurrency $Q_i$ at infinity in such a way that the sides of $A$ are parallel to the $d$ axes in $\mathbb{R}^d \subseteq \mathbb{RP}^d$.  Suppose $A$ is axis-aligned.  Then $\join{A_i}{A_{i+1}}$ and $\join{A_{i+d}}{A_{i+d+1}}$ intersect at one of the $Q_j$ so in particular these four points are coplanar.  Therefore $A$ is corrugated.

Say that $A$ is \emph{dual axis-aligned} if there are lines $L_1,L_2,\ldots, L_d$, all passing through a common point, such that $L_i$ contains $A_{i+jd}$ for all $i=1,2,\ldots, d$, $j \in \mathbb{Z}$.  If $A$ is dual axis-aligned then $\join{A_i}{A_{i+d}}$ equals one of the $L_j$, as does $\join{A_{i+1}}{A_{i+d+1}}$.  The $L_j$ all intersect so these two $d$-diagonals intersect.  Again $A$ must be corrugated.

\begin{rmk}
The properties defined above are projectively dual to each other, but for $d>2$ the given definitions do not make this fact clear.  More precisely, $A$ is dual axis-aligned if and only if the dual polygon is axis-aligned.  We briefly explain one direction of this equivalence.  Suppose $A$ is dual axis-aligned.  Let $L_1,L_2,\ldots L_d$ the lines containing each of its vertices and let $P$ be the point where the $L_i$ all intersect.  For $i=1,2,\ldots, d$, let $H_i$ be the hyperplane containing $L_1, L_2, \ldots, L_{i-1},L_{i+1},\ldots, L_d$.  It is helpful to think of $P$ as the origin, the $L_i$ as the coordinate axes, and the $H_i$ as the coordinate hyperplanes.  The sides of the dual of $A$ correspond to the codimension two subspaces spanned by $d-1$ consecutive vertices of $A$, say $A_{i+1},A_{i+2},\ldots, A_{i+d-1}$.  Each vertex is on one of the $L_j$, so each such subspace is contained in one of the $H_j$ (specifically the one with $j \equiv i \pmod{d}$).  This is dual to the notion of axis-aligned, in which each side passes through one of $d$ points.
\end{rmk}

\begin{prop}
Let $A \in \mathcal{P}_{d,n}^0$ and let $y_{i,k} = y_{i,k}(A)$.  
\begin{enumerate}
\item $A$ is axis-aligned if and only if the $y_{i,-1}$ all equal -1.
\item $A$ is dual axis-aligned if and only if the $y_{i,0}$ all equal -1.
\end{enumerate}
\end{prop}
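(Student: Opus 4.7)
The plan is to combine the cross-ratio formula \eqref{highery} with the elementary identity
\[
(a-b)(c-d) - (a-c)(b-d) = (a-d)(c-b),
\]
which shows that $-\chi(a,b,c,d) = -1$ if and only if $a=d$ or $b=c$. Applied to $y_{i,-1} = -\chi(P_{i-d,-1}, P_{i-1,0}, P_{i+1,0}, P_{i+d,-1})$, the condition $y_{i,-1}=-1$ reduces to the alternative: either two adjacent vertices of $A$ coincide ($P_{i-1,0}=P_{i+1,0}$), or $P_{i-d,-1}=P_{i+d,-1}$. The former contradicts the general-position assumption built into the definition of a twisted polygon, so throughout I would work with the latter alternative.

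For the ``if'' direction of (1), I would take $A$ axis-aligned with concurrency points $Q_1,\ldots,Q_d$. Unwinding \eqref{eqTdInv} identifies each $P_{j,-1}$ as the intersection of two sides of $A$ of the form $\join{A_m}{A_{m+1}}$ and $\join{A_{m+d}}{A_{m+d+1}}$, for an $m$ determined by $j$ and the parity of $d$. Both sides pass through $Q_{m \bmod d}$, so $P_{j,-1}=Q_{m\bmod d}$. Shifting $j$ by $2d$ shifts $m$ by $d$, giving $P_{i-d,-1}=P_{i+d,-1}$, hence $y_{i,-1}=-1$. Conversely, if every $y_{i,-1}=-1$, then $P_{j,-1}$ is $2d$-periodic in $j$; reading $P_{i-d,-1}=P_{i+d,-1}$ through the explicit formula shows that three consecutive sides $\join{A_m}{A_{m+1}}, \join{A_{m+d}}{A_{m+d+1}}, \join{A_{m+2d}}{A_{m+2d+1}}$ are concurrent. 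Iterating over $i$ produces, for each residue $r$ modulo $d$, a single point $Q_r$ through which all sides $\join{A_m}{A_{m+1}}$ with $m \equiv r \pmod d$ pass, which is the axis-aligned condition.

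For part (2) I would run the parallel argument starting from $y_{i,0} = -\chi(P_{i-d,0},P_{i-1,1},P_{i+1,1},P_{i+d,0})$. Here ``$y_{i,0}=-1$'' generically forces $P_{i-1,1}=P_{i+1,1}$ (the other alternative, $P_{i-d,0}=P_{i+d,0}$, would give $A_m=A_{m+d}$, again ruled out by general position). Assuming this for all $i$, all vertices of $T_d(A)$ collapse to a single point $P$, so every $d$-diagonal $\join{A_k}{A_{k+d}}$ of $A$ passes through $P$. Then at each vertex $A_k$, the two $d$-diagonals through $A_k$, namely $\join{A_{k-d}}{A_k}$ and $\join{A_k}{A_{k+d}}$, are both lines through $A_k$ and $P$, so they coincide (assuming $A_k \ne P$, which holds generically). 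Induction puts all $A_{k+md}$ with $k \equiv r \pmod d$ on a single line $L_r$ through $P$, which is the dual axis-aligned condition. The reverse implication is immediate, since for dual axis-aligned $A$ each $d$-diagonal equals one of the $L_r$ and all pass through $P$, forcing every vertex of $T_d(A)$ to be $P$.

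The main obstacle will be bookkeeping across the two parities of $d$: the identifications $P_{j,0} \leftrightarrow A_m$ and the explicit function $m=m(j,d)$ appearing in the formula for $P_{j,-1}$ differ by a shift depending on the parity. I would handle this either by tabulating the two cases once at the start, or by reindexing so that the statement of axis-alignedness absorbs the shift; the underlying geometric content is parity-independent, so after this bookkeeping the argument above runs uniformly.
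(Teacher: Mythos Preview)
Your proposal is correct and follows essentially the same route as the paper's proof: both arguments start from the identity that $-\chi(a,b,c,d)=-1$ iff $a=d$ or $b=c$, use the general-position hypothesis on twisted polygons to discard one alternative, and then translate the surviving coincidence ($P_{i-d,-1}=P_{i+d,-1}$ for part (1), $P_{i-1,1}=P_{i+1,1}$ for part (2)) into the axis-aligned and dual axis-aligned conditions via the same geometric reasoning about concurrent sides and collapsed $d$-diagonals. Your treatment is slightly more explicit about the parity bookkeeping and about the reverse implication in part (2), but the substance is the same.
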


\begin{proof}
By \eqref{highery}, the $y_{i,k}$ are negated cross ratios.  In general, $\chi(a,b,c,d)=1$ if and only if $a=d$ or $b=c$.
\begin{enumerate}
\item We have
\begin{displaymath}
y_{i,-1} = -\chi(B, A_j, A_{j+1}, C)
\end{displaymath}
where $B$ and $C$ are two vertices of $T_d^{-1}(A)$ a distance $d$ apart (here $i=2j+1$ if $d$ is even and $i=2j+2$ if $d$ is odd).  The definition of twisted polygon does not allow $A_j=A_{j+1}$.  Hence the $y_{i,-1}$ are all -1 if and only if the vertices  of $T_d^{-1}(A)$ are periodic with period $d$.  Each side of $A$ passes through one of these $d$ points, for instance the side $\join{A_j}{A_{j+1}}$ above passes through $B=C$.  Therefore $A$ is axis-aligned.  On the other hand, $B$ and $C$ can be computed as the intersection of $\join{A_j}{A_{j+1}}$ with the sides of $A$ that occur $d$ earlier and $d$ later respectively.  If $A$ is axis-aligned then these three sides pass through a common point so $B=C$.
\item For each $j \equiv d \pmod{2}$
\begin{displaymath}
y_{i,0} = -\chi(A_{(j-d)/2}, B, C, A_{(j+d)/2})
\end{displaymath}
where $B$ and $C$ are two consecutive vertices of $T_d(A)$ (here $i=j$ if $d$ is even and $i=j+1$ if $d$ is odd).  The definition of twisted polygon does not allow $A_{(j-d)/2} = A_{(j+d)/2}$ so $y_{i,0}=-1$ if and only if $B = C$.  This holds for all $i$ if and only if all vertices of $T_d(A)$ are equal to some common point $Q$.  By definition of $T_d$, each $d$-diagonal of $A$ goes through $Q$ in this case.  For each $i \in \mathbb{Z}$, we have that $\join{A_{i-d}}{A_i}$ and $\join{A_i}{A_{i+d}}$ both contain $Q$ and $A_i$, so they must be equal.  Hence $A_{i-d}, A_i, A_{i+d}$ are collinear.  It follows that all the $A_j$ with $j \equiv d \pmod{i}$ lie on some common line that passes through $Q$.  Therefore $A$ is dual axis aligned.  
\end{enumerate}
\end{proof}

The notions of axis-aligned and dual axis-aligned translate to the singularities of $G^{-1}$ and $G$ respectively that are considered in Theorem \ref{thmYDevron}.  The following then arises by specializing said theorem to the lattice $\lattice{d}{1}{n}{0}$.

\begin{thm}
Let 
\begin{align*}
U &= \{A \in \mathcal{P}_{d,n}^0 : A \textrm{ axis-aligned}\} \\
V &= \{A \in \mathcal{P}_{d,n}^0 : A \textrm{ dual axis-aligned}\} 
\end{align*}
Then $(U,V)$ is a Devron pair for the higher pentagram map $T_d$.  The width of the pair is at most $n-1$.
\end{thm}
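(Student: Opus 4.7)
The plan is to deduce this theorem as a direct specialization of Theorem \ref{thmYDevron} to the lattice $\Lambda = \lattice{d}{1}{n}{0}$, using the identification of $T_d$ with the $Y$-system map $G$ that was established in the subsection leading up to the statement. The bulk of the work has already been done: the coordinate change $u_{i,j} \mapsto $ (appropriate $y$-value) embeds the dynamics of $T_d$ on $\mathcal{P}_{d,n}^0$ (modulo projective equivalence and rescaling) as the restriction of $G$ to $\overline{X}_{\Lambda}$, and the last proposition identifies $U$ and $V$ with $\overline{U}$ and $\overline{V}$, respectively.

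First I would record that $\det(\Lambda) = n$ for $\Lambda = \lattice{d}{1}{n}{0}$, so that Theorem \ref{thmYDevron} yields a Devron pair of width at most $n-1$ in the $u$-coordinates. Next I would combine the two ingredients: a polygon $A \in \mathcal{P}_{d,n}^0$ lies in $U$ precisely when $y_{i,-1}(A) = -1$ for all applicable $i$, which is the same as saying $u_{i,j} = -1$ on the $A$-component of the $(u,\sigma)$-matrix, i.e.\ $(u(A),\sigma) \in \overline{U}$; similarly for $V$ and $\overline{V}$. Therefore, under the identification $T_d \leftrightarrow G$, the singularity-propagation conclusion of Theorem \ref{thmYDevron} translates directly into the claim that $T_d^k(U) \subseteq V$ and $T_d^{-k}(V) \subseteq U$ for some $k \leq n-1$, together with the necessary singularity statements on $U, V$ under $T_d^{-1}, T_d$.

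The one point that requires a brief comment, rather than pure quotation, is that Theorem \ref{thmYDevron} is a statement about the dynamical system on $\overline{X}_{\Lambda}$, whereas the pentagram-type maps $T_d$ live on $\mathcal{P}_{d,n}^0$ modulo a projective equivalence and a rescaling. Since $U$ and $V$ are invariant under both projective transformations and the rescaling operation (as they are defined by purely projective incidences), the identification of $T_d$-orbits with $G$-orbits does descend to the singular loci, and the generic-point condition (fourth bullet in Definition \ref{defDevron}) transfers because lifts of generic $A \in U$ to $\overline{U}$ can be chosen generically in $\overline{U}$.

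The potential obstacle I anticipate is purely bookkeeping: making sure the value of $\sigma$ is chosen correctly so that the parity of $i+j$ on which the $u_{i,j}$ form $A(u,\sigma)$ matches the parity that corresponds to $y_{i,-1}$ (so that ``axis-aligned'' really hits $\overline{U}$ and not $\overline{V}$), and that the $d$ odd versus $d$ even cases are covered uniformly. Both were already handled when embedding the map into the $Y$-system (the indexing $Y_{i,j,k} = y_{(d-1)i+(d+1)j,k}$ was chosen precisely for this), so no new calculation is required beyond noting that the parity conventions are consistent. With these checks in place, the theorem follows as an immediate corollary of Theorem \ref{thmYDevron}.
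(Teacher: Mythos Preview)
Your proposal is correct and matches the paper's approach exactly: the paper treats this theorem as an immediate specialization of Theorem \ref{thmYDevron} to the lattice $\Lambda = \lattice{d}{1}{n}{0}$ (with $\det(\Lambda)=n$), invoking the preceding proposition to translate the $y_{i,-1}=-1$ and $y_{i,0}=-1$ conditions into the axis-aligned and dual axis-aligned conditions. The additional remarks you make about projective/rescaling invariance and parity bookkeeping are reasonable sanity checks but are not spelled out in the paper, which simply states that the theorem ``arises by specializing said theorem'' to this lattice.
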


\subsection{The lower pentagram map}
As explained above, the $Y$-system \eqref{eqYRec} with periodicity lattice $\lattice{d}{1}{n}{0}$ for $d \geq 2$ encodes the higher pentagram map in dimension $d$.  The case $d=2$ corresponds to the original pentagram map.  Gekhtman, Shapiro, Tabachnikov, and Vainshtein also give a geometric interpretation of the system with $\Lambda = \lattice{1}{1}{n}{0}$  which fittingly can be seen as a one dimensional, or lower, pentagram map.

Let $\mathcal{P}_{1,n}$ be the space of twisted polygons in $\mathbb{RP}^1$ (defined analogously to the previous notions of twisted polygon).  Let $X$ be the space of pairs $(A,B)$ of twisted polygons on the line with a common monodromy.  The \emph{lower pentagram map} $T_1: X \to X$ is defined by $T(A,B) = (B,C)$ where each $C_i$ is constructed from $A_i, B_{i-1}, B_i, B_{i+1}$ by one of the following equivalent procedures
\begin{itemize}
\item $C_i$ is the image of $A_i$ under the unique projective transformation that fixes $B_i$ and interchanges $A_{i-1}$ and $A_{i+1}$.
\item $C_i$ is the unique point for which $[B_{i-1},B_i,A_i,B_{i+1},A_i,C_i] = -1$ where
\begin{displaymath}
[a,b,c,d,e,f] = \frac{(a-b)(c-d)(e-f)}{(b-c)(d-e)(f-a)}
\end{displaymath}
\end{itemize}
This operation is invertible.  In fact for fixed $B$, we can recover $A$ from $C$ using the same procedure that produced $C$ from $A$.

We move through the discussion of the Devron property for this system quickly, as it is quite similar to the situation for the higher pentagram map.  Fix $(A,B) \in X$.  Let $P_{2i+1,-1} = A_i$ and $P_{2i+1,0}=B_i$ for $i \in \mathbb{Z}$.  Recursively define $P_{i,k}$ for $i$ odd an $k>0$ or $k<-1$ in such a manner that
\begin{displaymath}
[P_{i-2,k}, P_{i,k}, P_{i,k-1}, P_{i+2,k}, P_{i,k}, P_{i,k+1}] = -1
\end{displaymath}
always holds.  The $P_{i,k-1}$ and $P_{i,k}$ are the points that make up $T^k(A,B)$.  For $i,k \in \mathbb{Z}$ with $i$ even, let
\begin{displaymath}
y_{i,k} = y_{i,k}(A,B) = -\chi(P_{i-1,k}, P_{i-1,k+1}, P_{i+1,k+1}, P_{i+1,k}).
\end{displaymath}

\begin{prop}[\cite{GSTV}]
Let $(A,B) \in X$ and let $y_{i,k} = y_{i,k}(A,B)$ as above.
\begin{itemize}
\item $y_{i+2n,k} = y_{i,k}$ for all $i,k$ with $k$ odd.
\item $y_{1,-1}^{-1}y_{1,0}y_{3,-1}^{-1}y_{3,0} \cdots y_{2n-1,0} = 1$
\item The distinct $y_{i,-1}$ and $y_{i,0}$ determine $A$ up to projective equivalence and a rescaling operation.
\end{itemize}
\end{prop}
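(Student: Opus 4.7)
The plan is to treat the three bullets separately, exploiting the common monodromy $\phi$ with $A_{i+n}=\phi(A_i)$, $B_{i+n}=\phi(B_i)$ and the fact that the defining identity $[P_{i-2,k},P_{i,k},P_{i,k-1},P_{i+2,k},P_{i,k},P_{i,k+1}]=-1$ used to propagate the $P_{i,k}$ is a projective invariant.

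For the periodicity bullet, I would induct on $|k|$ to show $P_{i+2n,k}=\phi(P_{i,k})$ for all admissible $(i,k)$. The cases $k=-1,0$ hold by construction, and the inductive step uses that $\phi$ preserves the six-ratio, so $\phi(P_{i,k+1})$ also satisfies the defining equation and must equal $P_{i+2n,k+1}$ by uniqueness of its solution. Projective invariance of the cross ratio then immediately gives $y_{i+2n,k}=y_{i,k}$.

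For the product identity, my first instinct is a direct telescoping argument: work in an affine chart on $\mathbb{RP}^1$, expand each $-\chi$ as a ratio of four differences of affine coordinates, and multiply. The interior factors should cancel, leaving only endpoint contributions related to each other by $\phi$ via the first bullet; these should then collapse to $1$ after careful sign bookkeeping. A cleaner structural route, paralleling Section~\ref{subsecHigher}, is to first verify that the substitution $Y_{i,j,k}:=y_{i+j,k}$ converts the cross-ratio recursion into the $Y$-system \eqref{eqYRec} for the lattice $\lattice{1}{1}{n}{0}$, and then observe that the claimed product identity is exactly the condition $\rho(u,\sigma)=1$ from Section~\ref{secY}. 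This offloads the tricky algebra to the framework already developed in the Y-system section.

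For the third bullet, I would reconstruct $(A,B)$ from the $y$-values by bootstrapping. Use projective freedom to normalize three of the $P$'s, say $P_{-1,-1}, P_{1,-1}, P_{1,0}$, leaving a one-parameter ambiguity that should match the rescaling operation. Then walk through the $2n$ distinct indices, at each step solving the defining cross-ratio equation for the single new unknown point. Once every vertex of $A$ and $B$ is pinned down, $\phi$ is forced by the periodicity constraint, and the product identity from the second bullet is precisely what guarantees the reconstruction closes up consistently after a full loop. The main obstacle I expect is the product identity: both routes require either careful sign tracking or an auxiliary verification of the $Y$-system recurrence in the $d=1$ case, whose geometry is mildly degenerate since ``diagonals'' in $\mathbb{RP}^1$ have nothing to intersect, forcing the use of the six-point identity in place of an explicit projective construction.
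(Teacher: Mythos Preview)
The paper does not prove this proposition; it is quoted without proof and attributed to \cite{GSTV}, so there is no argument in the paper to compare your proposal against.

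That said, a few comments on your sketch. Your approach to the first bullet is fine and essentially the only natural one. For the second bullet, your ``structural route'' does not actually do any work: observing that the product identity is the condition $\rho(u,\sigma)=1$ from Section~\ref{secY} is just a restatement, not a proof. Section~\ref{secY} shows that $\rho$ is \emph{conserved} by $G$, but it never proves that $\rho=1$ for any particular geometric input; in fact the paper uses the present proposition (specifically this product identity, imported from \cite{GSTV}) to conclude $(u,\sigma)\in\overline{X}_\Lambda$. So invoking the $Y$-system framework here is circular. Your first instinct, the direct telescoping of cross ratios in an affine chart, is the right way to go and is how such identities are typically established. For the third bullet your reconstruction outline is reasonable, though you should be explicit about what the ``rescaling operation'' is in the $d=1$ setting and check that the residual one-parameter freedom in your normalization matches it exactly; the paper itself does not spell this out, deferring entirely to \cite{GSTV}.
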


\begin{prop}[\cite{GSTV}]
Let $(A,B) \in X$ and let $y_{i,k} = y_{i,k}(A,B)$ as above.  Then
\begin{displaymath}
y_{i,k-1}y_{i,k+1} = \frac{(1+y_{i-2,k})(1+y_{i+2,k})}{(1+y_{i,k}^{-1})^2}
\end{displaymath}
for all $i,k \in \mathbb{Z}$ with $i$ even.  Put another way, if $Y_{i,j,k} = y_{2j,k}$ for $i+j+k$ even then the $Y_{i,j,k}$ satisfy \eqref{eqYRec}.
\end{prop}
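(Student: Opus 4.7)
The plan is to verify this recurrence by a direct projective computation on $\mathbb{RP}^1$, exploiting the fact that every $y_{i',k'}$ appearing in the identity is a projective invariant of four points. Since both sides are rational in the relevant $P_{i',k'}$'s, it suffices to normalize a projective frame and check a rational identity.

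First I would catalog the points involved. The left-hand side of the identity draws on the eight points $P_{i\pm 1,k+\ell}$ for $\ell \in \{-1,0,1,2\}$, while the right-hand side additionally uses $P_{i\pm 3,k}$ and $P_{i\pm 3,k+1}$. The defining relation
\[
[P_{j-2,k'},P_{j,k'},P_{j,k'-1},P_{j+2,k'},P_{j,k'},P_{j,k'+1}]=-1
\]
is an involution: it determines $P_{j,k'+1}$ as the image of $P_{j,k'-1}$ under the unique projective involution fixing $P_{j,k'}$ and swapping $P_{j-2,k'}$ with $P_{j+2,k'}$. Applying this at $(j,k')=(i\pm 1,k)$ expresses $P_{i\pm 1,k-1}$ in terms of points at times $k$ and $k+1$, and applying it at $(j,k')=(i\pm 1,k+1)$ expresses $P_{i\pm 1,k+2}$ similarly.

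Next I would normalize. By a projective transformation send $P_{i-1,k}\mapsto 0$, $P_{i+1,k}\mapsto 1$, $P_{i-1,k+1}\mapsto \infty$. The five remaining free points $P_{i-3,k}$, $P_{i+3,k}$, $P_{i+1,k+1}$, $P_{i-3,k+1}$, $P_{i+3,k+1}$ become five affine parameters. Using the involutions above, I would write each of the four derived points $P_{i\pm 1,k-1}$, $P_{i\pm 1,k+2}$ as an explicit M\"obius expression in these parameters. Each cross-ratio $y_{i,k-1}, y_{i,k+1}, y_{i-2,k}, y_{i,k}, y_{i+2,k}$ then becomes a rational function of the five parameters, and the asserted identity
\[
y_{i,k-1}\,y_{i,k+1}\,(1+y_{i,k}^{-1})^{2} = (1+y_{i-2,k})(1+y_{i+2,k})
\]
reduces to a mechanical rational-function identity, straightforward to verify (especially with a computer algebra system).

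Finally, the reformulation $Y_{i,j,k}=y_{2j,k}$ is a bookkeeping step: since this $Y$ does not depend on its first index, $Y_{i\pm 1,j,k}=Y_{i,j,k}$, so the denominator in \eqref{eqYRec} collapses to $(1+Y_{i,j,k}^{-1})^2=(1+y_{2j,k}^{-1})^2$ and the numerator becomes $(1+y_{2j-2,k})(1+y_{2j+2,k})$, matching the identity with $i \mapsto 2j$. The main obstacle is algebraic bulk in the middle step: five parameters together with four derived M\"obius compositions, giving expressions that expand quickly. The key simplifying devices are the three-point normalization, which eliminates three of the eight free points at the outset, and the involutive character of the recurrence, which keeps each derived point expressible as a short M\"obius transform rather than a nested composition.
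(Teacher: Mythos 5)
This proposition is stated in the paper with the attribution \cite{GSTV} and \emph{no proof is supplied}; the paper simply imports it as a known fact. So there is no internal proof to compare against. Your task is therefore to judge whether the proposed argument would actually establish the identity.

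Your plan is sound. The inventory of points is correct: the twelve relevant $P$'s are the eight $P_{i\pm1,k+\ell}$ ($\ell\in\{-1,0,1,2\}$) together with $P_{i\pm3,k}$ and $P_{i\pm3,k+1}$; eight of these (the ones at times $k,k+1$) are free, three can be fixed by a projective normalization leaving five affine parameters, and the other four ($P_{i\pm1,k-1}$, $P_{i\pm1,k+2}$) are each a single M\"obius step away via the defining six-point relation. Your reading of that relation as a projective involution is correct: normalizing $P_{j,k'}=0$, $P_{j-2,k'}=1$, $P_{j+2,k'}=a$ and $P_{j,k'-1}=b$, the constraint $[\cdot]=-1$ forces $P_{j,k'+1}=\frac{ab}{(a+1)b-a}$, which is exactly the image of $b$ under the involution fixing $0$ and swapping $1\leftrightarrow a$. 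The bookkeeping reduction to \eqref{eqYRec} under $Y_{i,j,k}=y_{2j,k}$ is also correct, since the denominator factors collapse because $Y$ is independent of the first index. The only caveats are practical rather than logical: you should make the rational computation explicit (you defer to a CAS), and you should record that each $y$ in the identity is a cross-ratio of \emph{collinear} points (here trivially, since everything lives in $\mathbb{RP}^1$), so the normalization is innocuous. Conceptually, the approach you describe is more elementary and computational than what one would expect in \cite{GSTV}, where such identities are derived from weighted-network/cluster mutations; yours trades structural insight for a mechanical verification, which is a legitimate and self-contained route to the same statement.
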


For fixed $k$, the $Y_{i,j,k}$ defined above are clearly periodic with respect to
\begin{displaymath}
\Lambda' = \lattice{2}{0}{n}{-n}
\end{displaymath}
which is the companion to $\Lambda = \lattice{1}{1}{n}{0}$.  Using similar reasoning to before, the lower pentagram map $T_1$ is described by the $Y$-system map
\begin{displaymath}
G: \overline{X}_{\Lambda} \to \overline{X}_{\Lambda}
\end{displaymath}
for this $\Lambda$.

\begin{thm}
Let $X$ and $T_1 : X \to X$ be as before.  Define $U,V \subseteq X$ by
\begin{align*}
U &= \{(A,B) : \textrm{ the $A_i$ are constant}\} \\
V &= \{(A,B) : \textrm{ the $B_i$ are constant}\}
\end{align*}
Then $U,V$ is a Devron pair for $T_1$.  The width of the pair is at most $n$.
\end{thm}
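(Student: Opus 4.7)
The plan is to reduce directly to Theorem \ref{thmYDevron} applied to the $Y$-system map $G : \overline{X}_{\Lambda} \to \overline{X}_{\Lambda}$ for the lattice $\Lambda = \lattice{1}{1}{n}{0}$ just identified with $T_1$, which has $\det(\Lambda) = n$. The essential preliminary work is to translate the two geometric sets $U$ and $V$ into the singular $Y$-system sets $\overline{U}$ and $\overline{V}$.

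For $i = 2j$ even, the defining cross ratios for the $y$-coordinates give
\begin{align*}
y_{i, -1}(A, B) &= -\chi(A_{j-1}, B_{j-1}, B_j, A_j), \\
y_{i, 0}(A, B) &= -\chi(B_{j-1}, C_{j-1}, C_j, B_j),
\end{align*}
where $C$ denotes the vertex sequence of $T_1(A, B)$. The key identity is $\chi(x, y, z, x) = 1$, valid whenever the first and last arguments coincide. If $(A, B) \in U$, so all $A_i$ equal some common point $a$, then $y_{i, -1}(A, B) = -1$ for every even $i$, placing $(A, B)$ into $\overline{U}$. If $(A, B) \in V$, so all $B_i$ equal some $b$, the formula for $y_{i, 0}$ collapses to $-\chi(b, C_{j-1}, C_j, b) = -1$ independent of the (indeterminate) $C$, placing $(A, B)$ into $\overline{V}$.

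Next I would verify the singularity requirements directly. For $V$, the sextic $[B_{i-1}, B_i, A_i, B_{i+1}, A_i, C_i] = -1$ defining $C_i$ evaluates, when all $B_i = b$, to $[b, b, A_i, b, A_i, C_i] = 0$, which cannot equal $-1$; hence $T_1$ is singular on $V$. A symmetric calculation with the inverse map shows $T_1^{-1}$ is singular on $U$. Because one step of $T_1$ corresponds to one step of $G$ in the $y$-coordinates, Theorem \ref{thmYDevron} then immediately yields that $(\overline{U}, \overline{V})$ is a Devron pair for $G$ of width at most $n - 1 \leq n$, and this transfers to the Devron property for $T_1$ on $(U, V)$ with the same bound.

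The step I expect to require the most care is ruling out the possibility that a generic point of $U$ is carried by $T_1^M$ into a component of $\overline{V}$ other than $V$: the sets $\overline{U}$ and $\overline{V}$ can each decompose into several components (corresponding to different ways of making the relevant cross ratios equal to $-1$), and the component in which an entire polygon of $\mathbb{RP}^1$-vertices collapses to a single point is only one such piece. The cleanest way to handle this is to exhibit a single explicit $(A, B) \in U$ (say, a closed $n$-gon with all $A_i = a$ and generically chosen $B_i$) and check by direct computation that its $M$th iterate under $T_1$ has all $B$-vertices equal. By the Zariski-open character of the Devron condition of Definition \ref{defDevron}, this one example then suffices to confirm the property generically.
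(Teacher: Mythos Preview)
Your strategy is the same as the paper's: pass to $y$-coordinates and invoke Theorem~\ref{thmYDevron} for $\Lambda=\lattice{1}{1}{n}{0}$. The gap is in the dictionary between $V$ and $\overline{V}$, and it produces an off-by-one error in the width. Your identification of $V$ with the locus $y_{i,0}=-1$ is in the wrong direction and uses an undefined quantity: computing $y_{i,0}(A,B)$ requires the next layer $C$, which does not exist when $(A,B)\in V$ because $T_1$ is singular there. What you actually need is the implication the other way, that $y_{i,N}=-1$ forces the geometric image into $V$, and this is where the shift occurs. The cross ratio $y_{2j,N}$ involves layers $N$ and $N+1$, so it is the ``$y_{i,-1}$-coordinate'' of $(A',B')=T_1^{N+1}(A,B)$, not of $T_1^{N}(A,B)$. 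Writing it out, $y_{2j,N}=-\chi(A'_{j-1},B'_{j-1},B'_j,A'_j)=-1$ gives two cases: $A'_{j-1}=A'_j$ or $B'_{j-1}=B'_j$.

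The paper rules out the first case by minimality of $N$: if $A'_{j-1}=A'_j$ then $y_{2j,N-1}=-\chi(A''_{j-1},A'_{j-1},A'_j,A''_j)=-1$, contradicting that no $Y_{i,j,k}$ equals $-1$ for $0\le k<N$. Hence $B'$ is constant and $(A',B')\in V$, giving width $N+1\le n$ rather than $N\le n-1$. Your proposed explicit-example check would in fact uncover this shift (you would find $T_1^{N}(x)\notin V$ but $T_1^{N+1}(x)\in V$), so the method is salvageable; but as written the claim that the $G$-width ``transfers with the same bound'' to $T_1$ is incorrect, and the paper's minimality argument is the clean way to pin down the extra step.
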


\begin{proof}
Let $(A,B) \in U$ and let $y_{i,k} = y_{i,k}(A)$, $Y_{i,j,k} = y_{2j,k}$ as before.  For all $i$ 
\begin{displaymath}
y_{2i,-1} = -\chi(A_{i-1},B_{i-1},B_i,A_i) = -1
\end{displaymath}
since $A_{i-1} = A_i$.  So $Y_{i,j,-1} = -1$ for all $i+j$ odd.  By Theorem \ref{thmYDevron}, there is an $N \leq n-1$ such that 
\begin{itemize}
\item the $Y_{i,j,N} = -1$ 
\item for $1 \leq k < N$ and generic $A$ the $Y_{i,j,k} \neq -1$.
\end{itemize}
Let $(A',B') = T_1^{N+1}(A,B)$.  Then
\begin{displaymath}
-1 = Y_{i,j,N} = y_{2j,N} = -\chi(A'_{j-1}, B'_{j-1}, B'_j, A'_j)
\end{displaymath}
for all $j \in \mathbb{Z}$, so $A'_{j-1}= A'_j$ or $B'_{j-1}=B'_j$.  Having $A'_{j-1}=A'_{j}$ would cause $Y_{i,j,N-1}=-1$ which is generically false.  So we have $B'_{j-1} = B'_j$ for all $j$.  Therefore $(A',B') \in V$ as desired.
\end{proof}

\section{Polygon recutting} \label{secCut}
Polygon recutting is an operation on polygons in the plane first studied by Adler \cite{A}.  A triangle is cut from the rest of the polygon by cutting along a shortest diagonal.  The triangle is then reattached along the same diagonal but with the opposite orientation.  Figure \ref{figRecut} shows an instance of this procedure.  If the cut is made between vertices $i-1$ and $i+1$, then only vertex $i$ ends up changing position.  As such, call this operation recutting at vertex $i$.

For the purpose of this section, a polygon is simply a cyclically ordered $n$-tuple of points (its vertices) in $\mathbb{R}^2$.  Let $X_n$ be the space of $n$-gons, so $X_n \cong \mathbb{R}^{2n}$.  For $i=1,\ldots, n$ let $s_i: X_n \to X_n$ denote the operation of recutting at vertex $i$.  Each $s_i$ is a rational map, which is defined precisely when vertices $i-1$ and $i+1$ are distinct.  Moreover, it is clear that $s_i$ is an involution, so in particular it is birational.

\begin{lem}[\cite{A}]
The functions $s_i$ on $X_n$ satisfy the relations of the affine symmetric group, namely
\begin{itemize}
\item $s_i^2 = 1$ for $i=1,2,\ldots, n$
\item $s_is_j = s_js_i$ if $i-j \equiv 2,3,\ldots, n-2 \pmod{n}$
\item $s_is_js_i = s_js_is_j$ if $i-j \equiv \pm 1 \pmod{n}$.
\end{itemize}
\label{lemBraid}
\end{lem}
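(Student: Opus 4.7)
The plan is to verify the three families of relations in turn. The first two are essentially immediate from the local nature of recutting, while the braid relation is the main content and requires a genuine geometric identity.

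For $s_i^2 = 1$, the map $s_i$ replaces $A_i$ by the reflection of $A_i$ across the perpendicular bisector of the segment $A_{i-1}A_{i+1}$ and changes no other vertex. Since $A_{i-1}$ and $A_{i+1}$ are themselves unchanged by the first application, the perpendicular bisector used by the second application coincides with the one used by the first, and the composition $s_i^2$ acts as the identity.

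For the commutation $s_i s_j = s_j s_i$ when $i - j \not\equiv 0, \pm 1 \pmod{n}$, the map $s_k$ reads only the triple $(A_{k-1}, A_k, A_{k+1})$ and writes only $A_k$. Under the hypothesis the sets $\{i-1, i, i+1\}$ and $\{j-1, j, j+1\}$ are disjoint modulo $n$, so the input data and output coordinate of $s_i$ are disjoint from those of $s_j$; the two operations act on independent variables and therefore commute.

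The braid relation $s_i s_{i+1} s_i = s_{i+1} s_i s_{i+1}$ is the main obstacle. Since every vertex outside $\{A_i, A_{i+1}\}$ is fixed by both products, the identity need only be verified on the four-vertex window $A_{i-1}, A_i, A_{i+1}, A_{i+2}$. I would pass to complex coordinates on $\mathbb{R}^2 \cong \mathbb{C}$ and work with the consecutive edge vectors $w_j = A_j - A_{j-1}$. A short computation, based on the fact that reflection across the perpendicular bisector of $[0, c] \subset \mathbb{C}$ sends $p$ to $c - \overline{p}\cdot c/\overline{c}$, shows that recutting at vertex $i$ transforms the pair of edges by
\begin{displaymath}
(w_i, w_{i+1}) \;\longmapsto\; \left( \frac{c\, \overline{w_{i+1}}}{\overline{c}},\ \frac{c\, \overline{w_i}}{\overline{c}} \right), \qquad c = w_i + w_{i+1},
\end{displaymath}
so that each new edge is the other old edge reflected across the line through the origin in the direction of $c$. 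In particular $|w_i|$ and $|w_{i+1}|$ are swapped, giving a sanity check: both compositions induce the same reversal on the three consecutive lengths $(|w_i|, |w_{i+1}|, |w_{i+2}|)$.

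With this setup the braid identity reduces to an explicit rational identity in $w_i, w_{i+1}, w_{i+2}$ and their conjugates: write out the three new edge vectors produced by each of the two compositions and check equality componentwise. The main difficulty is that a direct expansion becomes unwieldy, since each recutting introduces a nested factor of the form $c/\overline{c}$ depending on the previous output. I would tame this by first translating so that $A_{i-1} = 0$ and then rotating so that $A_{i+2}$ is real; these normalizations commute with every $s_k$ and cut the verification down to a computation in four real parameters. The resulting expressions collapse after repeated use of $c\,\overline{c} = |c|^2$ together with the length-swapping property above, yielding the desired braid relation.
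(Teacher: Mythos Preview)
The paper does not prove this lemma; it is attributed to Adler \cite{A} and stated without argument. Your proposal therefore supplies content that the paper simply imports from the literature.

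Your treatment of $s_i^2=1$ is fine, and the edge-vector formula you derive for recutting in complex coordinates is correct. There is, however, a slip in the commutation step: when $i-j\equiv\pm 2\pmod n$ the triples $\{i-1,i,i+1\}$ and $\{j-1,j,j+1\}$ are \emph{not} disjoint---they share the vertex $i\mp 1=j\pm 1$. What is true, and what actually makes the argument work, is that the \emph{write set} $\{i\}$ of $s_i$ is disjoint from the \emph{read set} $\{j-1,j,j+1\}$ of $s_j$ and vice versa; your own read/write framing already contains this fix, so the repair is only a matter of stating the correct disjointness.

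For the braid relation your strategy is reasonable, but the proof is only sketched: you assert that after the normalizations the expressions ``collapse'' without exhibiting the cancellation. Since this identity is the entire content of the lemma, a self-contained proof would need to display at least the key step of that computation (or give a structural reason, e.g.\ via the length-swapping together with a second invariant that pins down the triple of edges uniquely).
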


\begin{lem}
Let $A,B,C,D \in \mathbb{R}^2$ be distinct points.  Then recutting at $B$ in $\triangle ABD$ yields $\triangle ACD$ if and only if recutting at $A$ in $\triangle BAC$ produces $\triangle BDC$.
\label{lemConsistency}
\end{lem}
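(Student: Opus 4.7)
The plan is to recast the recutting operation as a reflection across the perpendicular bisector of the cut diagonal, which makes both sides of the asserted equivalence reduce to the same symmetric condition on the four points.

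First I would establish the following reformulation: given a triangle with vertices $U,V,W$, recutting at $V$ replaces $V$ by the unique point $V'$ satisfying $|V'U|=|VW|$ and $|V'W|=|VU|$, since reattaching the detached triangle with reversed orientation along $UW$ is exactly what swaps the roles of the two sides incident to $V$. Equivalently, $V'$ is the image of $V$ under reflection across the perpendicular bisector $\ell_{UW}$ of the segment $UW$.

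Applying this reformulation, the condition ``recutting at $B$ in $\triangle ABD$ yields $\triangle ACD$'' translates into ``$C$ is the reflection of $B$ across $\ell_{AD}$,'' which is itself equivalent to saying that $\ell_{AD}$ is the perpendicular bisector of $BC$, i.e.\ $\ell_{AD}=\ell_{BC}$. By the symmetric application of the same reformulation to the triangle $\triangle BAC$ cut along $BC$, the condition ``recutting at $A$ in $\triangle BAC$ produces $\triangle BDC$'' is equivalent to $D$ being the reflection of $A$ across $\ell_{BC}$, i.e.\ again $\ell_{BC}=\ell_{AD}$. Both conditions are therefore equivalent to the single requirement that $AD$ and $BC$ share a common perpendicular bisector, and hence to each other.

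The distinctness hypothesis on $A,B,C,D$ guarantees that the segments $AD$ and $BC$ are nondegenerate so that the perpendicular bisectors $\ell_{AD}$ and $\ell_{BC}$ are well-defined lines. There is no substantive obstacle in the argument; the content of the lemma is captured entirely by the fact that the condition ``$\ell_{AD}=\ell_{BC}$'' is symmetric under interchange of the two cuts, so the only real work is making the reflection interpretation of recutting explicit.
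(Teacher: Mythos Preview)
Your argument is correct and follows essentially the same route as the paper's proof: both reduce each side of the equivalence to a single symmetric condition on the four points. The paper phrases that condition as ``$A,B,C,D$ are the vertices of an isosceles trapezoid with $\overline{BC}\parallel\overline{AD}$,'' while you phrase it as ``$AD$ and $BC$ have the same perpendicular bisector''; these are the same thing, and your explicit identification of recutting with reflection across the perpendicular bisector of the cut is exactly what underlies the paper's one-line appeal to the trapezoid picture.
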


\begin{proof}
Both conditions are equivalent to $A,B,C,D$ being the vertices of an isosceles trapezoid with sides $\overline{BC}$ and $\overline{AD}$ parallel (see Figure \ref{figConsistency}).
\end{proof}

\begin{figure}
\begin{pspicture}(10,3)
\pnode(1,1){A}
\pnode(2,2){B}
\pnode(3,2){C}
\pnode(4,1){D}
\pspolygon(A)(B)(D)(C)
\psline[linestyle=dashed](A)(D)
\uput[dl](A){$A$}
\uput[ul](B){$B$}
\uput[ur](C){$C$}
\uput[dr](D){$D$}


\rput(5,0){
\pnode(1,1){A2}
\pnode(2,2){B2}
\pnode(3,2){C2}
\pnode(4,1){D2}
\pspolygon(B2)(A2)(C2)(D2)
\psline[linestyle=dashed](B2)(C2)
\uput[dl](A2){$A$}
\uput[ul](B2){$B$}
\uput[ur](C2){$C$}
\uput[dr](D2){$D$}
}
\end{pspicture}
\caption{Consistency for polygon recutting}
\label{figConsistency}
\end{figure}
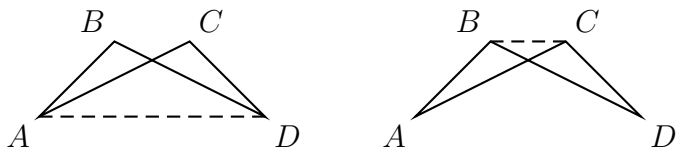

It is possible to compose the various $s_i$ in any order, but we will consider the so-called bipartite dynamics.  Assume an even number of sides and perform all the $s_i$ with $i$ odd, then all the $s_i$ with $i$ even, and continue in this manner.  For a fixed parity of $i$, the $s_i$ commute, so there is no need to specify an order.  What we obtain is a map 
\begin{displaymath}
F:X_{2n} \times \{0,1\} \to X_{2n} \times \{0,1\}
\end{displaymath}
defined by $F(A,\sigma) = (A', 1 - \sigma)$ where
\begin{displaymath}
A' = \begin{cases}
s_{2n} \circ \cdots \circ s_4 \circ s_2(A), & \sigma = 0 \\
s_{2n-1} \circ \cdots \circ s_3 \circ s_1(A), & \sigma = 1 \\
\end{cases}
\end{displaymath}

Lastly, define $W_0, W_1 \subseteq X_{2n}$ by
\begin{align*}
W_0 &= \{A \in X_{2n} : A_2 = A_4 = \ldots = A_{2n}\} \\
W_1 &= \{A \in X_{2n} : A_1 = A_3 = \ldots = A_{2n-1}\} 
\end{align*}
and define $U,V \subseteq X_{2n} \times \{0,1\}$ by
\begin{align*}
U &= (W_0 \times \{0\}) \cup (W_1 \times \{1\}) \\
V &= (W_0 \times \{1\}) \cup (W_1 \times \{0\}) 
\end{align*}

\begin{thm}
Let $F:X_{2n}\times \{0,1\} \to X_{2n}\times \{0,1\}$ and $U,V$ be as above.  Then $U,V$ is a Devron pair of width $n-1$.
\label{thmRecut}
\end{thm}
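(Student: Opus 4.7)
The first three conditions of Definition~\ref{defDevron} are immediate. For $(A,0)\in W_0\times\{0\}\subseteq U$, the map $F^{-1}$ performs the odd recuttings $s_1,s_3,\ldots,s_{2n-1}$, and the defining diagonal of each $s_{2i-1}$ joins the equal points $A_{2i-2}=A_{2i}=P$, so $F^{-1}$ is singular on $U$. A symmetric argument handles $F$ on $V$, so one may take $a=b=1$ in the definition.

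The bulk of the proof is to show $F^{n-1}(U)\subseteq V$. My plan is to pass to an algebraic complexification. Identify $\mathbb{R}^2\cong\mathbb{C}$ and introduce for each vertex an independent pair $(z_i,w_i)$, with $w_i$ playing the algebraic role of $\overline{z_i}$. From the standard reflection formula (justified by the isosceles trapezoid characterization in Lemma~\ref{lemConsistency}) a short calculation gives the recutting at vertex~$i$ in closed form:
\begin{equation*}
z_i' = \frac{z_{i-1}(w_i - w_{i-1})+z_{i+1}(w_{i+1}-w_i)}{w_{i+1}-w_{i-1}}, \qquad w_i' = \frac{w_{i-1}(z_i - z_{i-1})+w_{i+1}(z_{i+1}-z_i)}{z_{i+1}-z_{i-1}}.
\end{equation*}
After normalizing the common value of the even vertices in $U$ to the origin, the stratum $U$ becomes $\{z_{2j}=w_{2j}=0\}$, and the state is parametrized by the $n$ pairs $(a_j,b_j):=(z_{2j-1},w_{2j-1})$. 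Substituting gives the Dodgson-like divided differences $z_{2i}^{(1)}=(a_{i+1}b_{i+1}-a_ib_i)/(b_{i+1}-b_i)$ and $w_{2i}^{(1)}=(a_{i+1}b_{i+1}-a_ib_i)/(a_{i+1}-a_i)$ after the first bipartite step.

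The key step will be to bilinearize the iteration so that the iterates $z_i^{(k)}$ and $w_i^{(k)}$ can be expressed, up to explicit tau-function rescalings, as ratios of consecutive $k\times k$ minors of an infinite periodic matrix built from the initial data $(a_j,b_j)$, parallelling Proposition~\ref{propDodgson}. I expect the underlying bilinear recurrence to be the octahedron recurrence~\eqref{eqOct} with $\lambda=-1$, so that the argument then mimics that of Theorem~\ref{thmOct}. The $2n$-periodicity of the polygon imposes $\Lambda$-periodicity on this matrix for a lattice $\Lambda$ of determinant $n$ whose minimal axial parameter is $M=n$, so Proposition~\ref{propDetEq0} forces every consecutive $(n+1)\times(n+1)$ minor to vanish. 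Translated back through the tau-function encoding, this collapse at step $n-1$ is precisely the condition that one parity class of vertices of $F^{n-1}(A,0)$ becomes a single point, i.e.\ $F^{n-1}(U)\subseteq V$. The reverse inclusion $F^{-(n-1)}(V)\subseteq U$ follows by the time-reversed argument.

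The main obstacle will be the bilinearization itself: polygon recutting is not manifestly Hirota-type, and matching the iterated divided-difference formulas to \eqref{eqOct} requires locating the correct tau-like normalization (the recutting analog of Proposition~\ref{propRescale}). A viable fallback is induction on~$n$, with base case $n=2$ (quadrilateral, direct trapezoid check via Lemma~\ref{lemConsistency}) and inductive step using the commutation relations of Lemma~\ref{lemBraid} to factor one bipartite step into a piece that preserves an axis-aligned substructure plus a piece to which the inductive hypothesis applies. The last bullet of Definition~\ref{defDevron} and exactness of the width~$n-1$ are then verified by exhibiting a single explicit polygon in $U$ whose orbit of length $n-1$ encounters no intermediate singularity, which suffices by the remark following that definition.
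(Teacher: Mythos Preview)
Your proposal does not yet constitute a proof: the central step is the promised bilinearization of the bipartite recutting iteration into an octahedron recurrence, and you explicitly flag this as the ``main obstacle'' without carrying it out. There is no established identity in the paper (or elsewhere that you cite) expressing the iterated divided differences $z_i^{(k)},w_i^{(k)}$ as ratios of consecutive minors of a $\Lambda$-periodic matrix, and the heuristic that the first step ``looks Dodgson-like'' is far from an inductive identification with~\eqref{eqOct}. The fallback induction on~$n$ is likewise only sketched; factoring one bipartite step via Lemma~\ref{lemBraid} into a piece preserving an axis-aligned substructure is not obviously possible, since the commutation relations do not by themselves isolate a smaller polygon inside the $2n$-gon.

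The paper's proof is quite different and avoids bilinearization entirely. The key observation is that on $(n{+}1)$-gons the generators $s_2,\ldots,s_n$ (omitting $s_1$ and $s_{n+1}$) satisfy the relations of the \emph{finite} symmetric group $S_{n+1}$, because the wraparound relation between $s_1$ and $s_{n+1}$ is absent. Hence $G=s_n\cdots s_2$ corresponds to an $n$-cycle and satisfies $G^n=1$ (Lemma~\ref{lemOrder}). One then builds from a generic (``jittery'') $A\in X_{n+1}$ a grid $C_{i,j}$ of points by iterating $G$, and defines $\phi(A)\in W_1$ from the first two rows of this grid. The isosceles-trapezoid consistency (Lemma~\ref{lemConsistency}) lets one read the bipartite iterates $F^k(\phi(A),1)$ row-by-row off the grid, and the boundary rows $C_{1,\cdot}$ and $C_{n+1,\cdot}$ are constant, forcing $F^{n-1}(\phi(A),1)\in V$. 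Genericity of $\phi(A)$ in $W_1$ then transfers the conclusion to all of $U$. This route uses only the braid relations already stated in Lemma~\ref{lemBraid} and is considerably more elementary than what you propose.
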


\begin{lem}
Define $G: X_{n+1} \to X_{n+1}$ by 
\begin{displaymath}
G = s_n \circ s_{n-1} \circ \cdots \circ s_3 \circ s_2.
\end{displaymath}
Then $G^n = 1$.
\label{lemOrder}
\end{lem}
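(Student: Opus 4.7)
The plan is to invoke Lemma \ref{lemBraid} and reduce the statement to a classical computation in the symmetric group.

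Applying Lemma \ref{lemBraid} with $n+1$ in place of $n$, the recutting involutions $s_1, s_2, \ldots, s_{n+1}$ on $X_{n+1}$ satisfy the defining relations of the affine symmetric group $\tilde S_{n+1}$. I focus on the subfamily $\{s_2, s_3, \ldots, s_n\}$ appearing in $G$. Because none of the indices $2, 3, \ldots, n$ equals $1$ or $n+1$, no two of them are adjacent modulo $n+1$ unless they are adjacent as integers; in particular, the ``wrap-around'' braid relation between $s_1$ and $s_{n+1}$ never comes into play. Thus, restricted to this subfamily, the only nontrivial relations are $s_i^2 = 1$, $s_i s_j = s_j s_i$ when $|i-j| \geq 2$, and $s_i s_{i+1} s_i = s_{i+1} s_i s_{i+1}$, which is exactly the standard Coxeter presentation of the symmetric group $S_n$ (type $A_{n-1}$).

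It follows that the map $s_i \mapsto$ (recutting at vertex $i$) extends to a homomorphism from $S_n$ to the group of birational self-maps of $X_{n+1}$. Under the identification $s_i \leftrightarrow (i-1,i)$ acting on $\{1, 2, \ldots, n\}$, the element $G = s_n s_{n-1} \cdots s_2$ corresponds to the product $(n-1,n)(n-2,n-1)\cdots (1,2)$. Tracing the action on $\{1, \ldots, n\}$ gives $1 \mapsto n$ and $k \mapsto k-1$ for $2 \leq k \leq n$, so this product is the $n$-cycle $(1, n, n-1, \ldots, 3, 2)$, of order exactly $n$. Hence $G^n = 1$ in $S_n$, and therefore $G^n$ is the identity rational map on $X_{n+1}$.

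I do not anticipate any real obstacle: the whole argument is formal once Lemma \ref{lemBraid} is in hand. The only point requiring care is verifying that the ``affine'' adjacency between $s_1$ and $s_{n+1}$ does not spuriously identify any of the generators $s_2, \ldots, s_n$, which the index bookkeeping above handles cleanly.
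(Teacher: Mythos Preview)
Your proof is correct and follows essentially the same approach as the paper: invoke Lemma~\ref{lemBraid} to obtain symmetric-group relations among the relevant $s_i$, then identify $G$ with an $n$-cycle. The only cosmetic difference is that the paper keeps $s_1$ in the picture (noting that $s_1$ and $s_n$ commute in $X_{n+1}$ since vertices $1$ and $n$ are not adjacent) and works inside $S_{n+1}$, whereas you work directly with the subfamily $s_2,\ldots,s_n$ inside $S_n$; the computation of the order is identical in either framing.
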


\begin{proof}
We are working with $(n+1)$-gons, so vertices 1 and $n$ are not adjacent.  Therefore $s_1$ and $s_n$ commute, which along with the rest of Lemma \ref{lemBraid} implies that $s_1,\ldots, s_n$ satisfy the relations of the symmetric group $S_{n+1}$.  Viewed as a permutation, $s_ns_{n-1}\cdots s_3s_2$ is an $n$-cycle, so $(s_ns_{n-1}\cdots s_3s_2)^n = 1$ as desired.
\end{proof}

Let 
\begin{displaymath}
S = \{(i,j) \in \mathbb{Z}^2 : i+j \textrm{ even}, 1 \leq i \leq n+1\} 
\end{displaymath}
Let $A \in X_{n+1}$.  Define a grid of points $C_{i,j}$ indexed by $(i,j) \in S$ as follows.  Let $C_{i,i} = A_i$ for $i=1,2,\ldots, n+1$.  More generally, for $i=1,2,\ldots, n+1$ and $k \in \mathbb{Z}$, let $C_{i,i+2k}$ be the $i$th vertex of $G^k(A)$ for $G$ as in Lemma \ref{lemOrder}.  Note that $G$ has no effect on vertices $1$ and $n+1$, so $C_{1,j} = A_1$ and $C_{n+1,j} = A_{n+1}$ for all $j$ of the appropriate parities.  By Lemma \ref{lemOrder}, $G^{k+n}(A) = G^k(A)$ for all $k$.  Therefore $G_{i,j+2n} = G_{i,j}$ for all $(i,j) \in S$.  

Define 
\begin{displaymath}
B_j = \begin{cases}
C_{1,j}, & j \textrm{ odd} \\
C_{2,j}, & j \textrm{ even} \\
\end{cases}
\end{displaymath}
By the above, the $B_i$ are $2n$-periodic and $B_1=B_3 = \ldots = B_{2n-1}$.  Therefore $B \in W_1$.  So the procedure producing $B$ from $A$ is a rational map $\phi: X_{n+1} \to W_0$.

Say that $A \in X_{n+1}$ is \emph{jittery} if the $n$ side lengths $d(A_1,A_2), \ldots, d(A_n,A_{n+1})$ are all nonzero and distinct.  

\begin{lem}
Suppose that $A \in X_{n+1}$ is jittery.
\begin{enumerate}
\item The maps $G^i$ with $i=1,2,\ldots, n$ are all defined at $A$.  Moreover, each successive iteration moves each vertex other than $A_1$ and $A_{n+1}$.
\item Let $C = C(A)$.  For each $i,j \in \mathbb{Z}$ with $i+j$ odd and $2 \leq i \leq n$, the points $C_{i-1,j}$, $C_{i,j-1}$, $C_{i,j+1}$ and $C_{i+1,j}$ are the vertices of an isosceles trapezoid with sides $\overline{C_{i-1,j} C_{i+1,j}}$ and $\overline{C_{i,j-1}C_{i,j+1}}$ parallel.
\item Let $B = \phi(A)$.  For each $k=0,1,\ldots n-1$, the vertices of $F^k(B,1)$ are
\begin{displaymath}
\ldots, C_{k+1,1}, C_{k+2,2}, C_{k+1,3}, C_{k+2,4}, \ldots
\end{displaymath}
if $k$ is even and
\begin{displaymath}
\ldots, C_{k+2,1}, C_{k+1,2}, C_{k+2,3}, C_{k+1,4}, \ldots
\end{displaymath}
if $k$ is odd.  In particular, $A_1 = B_1$ and $A_i=C_{i,i}$ is the $i$th vertex of $F^{i-2}(B,1)$ for $i=2,3,\ldots, n+1$.
\item $F^{n-1}(A,1) \in V$.
\end{enumerate}
\label{lemJitter}
\end{lem}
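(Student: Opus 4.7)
The plan is to prove the four parts in sequence, since each builds on the previous. For part (1), the key observation is that each recutting $s_i$ reflects $A_i$ across the perpendicular bisector of $A_{i-1}A_{i+1}$, and therefore swaps the two adjacent side lengths $\ell_{i-1}=|A_{i-1}A_i|$ and $\ell_i=|A_iA_{i+1}|$ (this follows from the isosceles trapezoid description in Lemma \ref{lemConsistency}). Consequently $G$ acts on the tuple $(\ell_1,\ldots,\ell_n)$ as the cyclic shift $(\ell_1,\ldots,\ell_n)\mapsto (\ell_2,\ldots,\ell_n,\ell_1)$, so $G^k(A)$ has the same multiset of side lengths as $A$ and remains jittery for every $k$. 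Within the $(k+1)$-st iteration of $G$, just before $s_i$ fires, the partially updated side-length sequence is $(\ell_2,\ldots,\ell_{i-1},\ell_1,\ell_i,\ldots,\ell_n)$, so the two sides meeting at vertex $i$ have lengths $\ell_1$ and $\ell_i$, which are distinct by hypothesis; hence $s_i$ is defined and actually moves vertex $i$.

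For part (2), write $P^{(m)}_\ell = C_{\ell,\ell+2m}$ for the $\ell$-th vertex of $G^m(A)$. At the moment $s_i$ fires inside the $(k+1)$-st iteration of $G$, vertex $i-1$ of the partially updated polygon has already been replaced by $P^{(k+1)}_{i-1}$, while vertices $i$ and $i+1$ are still $P^{(k)}_i$ and $P^{(k)}_{i+1}$. By construction $s_i$ replaces $P^{(k)}_i$ with $P^{(k+1)}_i$, so Lemma \ref{lemConsistency} gives that the four points $P^{(k+1)}_{i-1}, P^{(k)}_i, P^{(k+1)}_i, P^{(k)}_{i+1}$ form an isosceles trapezoid with $P^{(k)}_iP^{(k+1)}_i$ parallel to $P^{(k+1)}_{i-1}P^{(k)}_{i+1}$. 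Setting $j=i+2k+1$ (so $i+j$ is odd) translates this into the stated grid statement.

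Part (3) is then proved by induction on $k$, with base case $k=0$ being the definition of $B$. For the inductive step, $F$ applied to $(F^k(B,1),\sigma_k)$ performs simultaneous recuttings at all vertices of one parity, which are permitted to be done in any order by the commutation relations in Lemma \ref{lemBraid}. The crux is that each individual recut is precisely an instance of the trapezoid from part (2): for $k$ even (so $\sigma_k=1$), recutting at $C_{k+1,2i-1}$ in the triangle with neighbors $C_{k+2,2i-2}$ and $C_{k+2,2i}$ yields $C_{k+3,2i-1}$, by invoking the trapezoid at $(i',j')=(k+2,2i-1)$. Running this over all $i$ advances the polygon from rows $(k+1,k+2)$ of the grid to rows $(k+2,k+3)$ in the precise zigzag pattern claimed; the $k$ odd case is symmetric. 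Part (4) then follows by applying (3) at $k=n-1$: the resulting polygon lies on rows $n$ and $n+1$, and since $G$ fixes vertex $n+1$ the entire row $n+1$ of the $C$-grid is constant and equal to $A_{n+1}$, so all vertices of one parity agree and the polygon lies in $W_0$ or $W_1$; a parity count on $\sigma$ (flipped $n-1$ times from $1$) then confirms membership in $V$.

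The main obstacle is the alignment in part (3): one $F$ step performs simultaneous recuttings at all $n$ vertices of one parity in the $2n$-gon, whereas one $G$ step is the sequential composition $s_n\circ\cdots\circ s_2$ on the $(n+1)$-gon. These are operations on different polygon spaces, and the bridge between them is the $n$ parallel trapezoid configurations from part (2), one for each triangle recut in the $2n$-gon during a single application of $F$. Verifying this alignment requires keeping careful track of which row each vertex currently occupies in the $C$-grid and of how the parities of $k$ and $\sigma_k$ interact, but no further geometric input beyond Lemmas \ref{lemBraid} and \ref{lemConsistency} is needed.
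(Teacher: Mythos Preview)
Your proof is correct and follows essentially the same approach as the paper's own argument: both proceed through the four parts in order, using the side-length permutation interpretation of the $s_i$ for part (1), the step-by-step analysis of $G = s_n\circ\cdots\circ s_2$ together with Lemma~\ref{lemConsistency} for the trapezoid in part (2), induction on $k$ using part (2) for part (3), and the constancy of row $n{+}1$ of the $C$-grid for part (4). The only cosmetic differences are that you make the cyclic shift $(\ell_1,\ldots,\ell_n)\mapsto(\ell_2,\ldots,\ell_n,\ell_1)$ explicit in part (1), and in part (2) you argue directly for arbitrary $k$ rather than first invoking the translational symmetry $C_{i,j}\mapsto C_{i,j+2}$ to reduce to $j=i+1$; neither change is substantive. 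One small slip: in part (1) the partially updated side-length sequence you display is the one for the \emph{first} iteration ($k=0$), not the $(k{+}1)$-st, but since all $\ell_i$ are pairwise distinct your conclusion that the two sides meeting at vertex $i$ have distinct lengths holds regardless, so this does not affect correctness.
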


\begin{proof} \ \newline
\begin{enumerate}
\item In general, $s_i$ fixes all of the side lengths of the polygon except for $d(A_{i-1},A_i)$ and $d(A_i, A_{i+1})$ which get interchanged.  Moreover, if these two side lengths are distinct then it is impossible to have $A_{i-1}=A_{i+1}$, so $s_i$ must be nonsingular.  Lastly, in the case of distinct side lengths, $s_i$ acts nontrivially on $A_i$.  

Since $A$ is jittery, each of $s_2,s_3,\ldots s_{n-2}$ is defined at $A$ and produces another jittery polygon.  As such, any word in these $s_i$, including $G$ and its iterates, is defined at $A$.  In such a word, each individual $s_i$ will act nontrivially on vertex $i$.  Since $G$ contains each of $s_2,\ldots, s_{n-2}$ exactly once, it will act nontrivially on each of the corresponding vertices.   

\item Applying $G$ creates a translational symmetry in the $j$ direction.  As such, we can assume $j=i+1$.  The points in question are $C_{i-1,i+1}$, $C_{i,i}=A_i$, $C_{i,i+2}$ and $C_{i+1,i+1}=A_{i+1}$, i.e., the primitive diamonds in the figure
\begin{displaymath}
\begin{array}{ccccccc}
A_1 & & A_1 \\
& A_2 & & C_{2,4} \\
& & A_3 & & C_{3,5} \\
& & & A_4 & & C_{4,6} \\
& & & & \ddots & & \ddots \\
\end{array}
\end{displaymath}
(recall $C_{1,j} = A_1$ for all $j$).  By definition the $C_{i,i+2}$ are the vertices of $G(A) = s_{n}\cdots s_3s_2(A)$.  Each $s_i$ only modifies vertex $i$, so $s_{i-1} \cdots s_3s_2(A)$ has vertices
\begin{displaymath}
A_1, C_{2,4}, C_{3,5}, \ldots C_{i-1,i+1}, A_{i}, A_{i+1}, \ldots, A_{n+1}
\end{displaymath}
at which point $s_i$ changes $A_i$ to $C_{i,i+2}$.  The result follows from the proof of Lemma \ref{lemConsistency}.

\item Do induction on $k$.  The base case $k=0$ is simply the definition of $\phi$.  The inductive case follows from the previous and the proof of Lemma \ref{lemConsistency}

\item Suppose $n$ is odd.  Then by the previous part $F^{n-1}(B,1)$ has vertices 
\begin{displaymath}
\ldots C_{n,1}, C_{n+1,2}, C_{n,3}, C_{n+1,4}, \ldots.
\end{displaymath}
However, $C_{n+1,2j} = A_{n+1}$ for all $j$, so this polygon is in $W_0$ and $F^{n-1}(B,1) \in W_0 \times \{1\} \subseteq V$.  Similarly, if $n$ is even then $F^{n-1}(B,1) \in W_1 \times \{0\} \subseteq V$.
\end{enumerate}
\end{proof}

\begin{proof}[Proof of Theorem \ref{thmRecut}]
It is clear that $F^{-1}$ is singular on $U$ and $F$ is singular on $V$.

Let $(B,\sigma) \in U$.  Assume without loss of generality that $\sigma = 1$.  Suppose first that $B=\phi(A)$ for some jittery $A \in X_{n+1}$.  By part 4 of Lemma \ref{lemJitter}, we have $F^{n-1}(B,\sigma) \in V$.  By part 1 of the same lemma, $A$ gives rise to a grid $C_{i,j}$ of points with $C_{i-1,j} \neq C_{i+1,j}$ for all $i+j$ odd, $2 \leq j \leq n$.  Also, by part 3 of the lemma, the vertices of $F^k(B)$ are precisely the $C_{k+1,j}$ and $C_{k+2,j}$.  When $F$ is iterated starting from $(B,\sigma)$ each shortest diagonal is defined so each individual $s_i$ can be applied and is invertible.  Therefore $F^{n-1}$ is defined at $(B,\sigma)$ and $F^{-(n-1)}$ is defined at $F^{n-1}(B,\sigma)$.

We get that the conditions needed for the Devron property hold provided that the input $B$ is of the form $B=\phi(A)$ for some $A$ jittery.  Part 3 of Lemma \ref{lemJitter} implies that $\phi$ is injective when restricted to the space of jittery $n+1$-gons.  The set of such is an open subset of $X_{n+1} \cong \mathbb{R}^{2(n+1)}$, so its image contains an open ball in $W_1 \cong \mathbb{R}^{2(n+1)}$.  It follows that the needed results hold generically on $W_1$ as desired.
\end{proof}

\section{Generalized discrete Toda system} \label{secToda}
We now consider a variant of the generalized discrete Toda system.  Define a map $\mu:\mathbb{C}^{2 \times 2} \to \mathbb{C}^{2 \times 2}$ by
\begin{displaymath}
\mu\left(\left[\begin{array}{cc}
x_1 & y_1 \\
x_2 & y_2 \\
\end{array}
\right]\right)
= \left[\begin{array}{cc}
y_1\frac{x_2+y_2}{x_1+y_1} & x_1\frac{x_2+y_2}{x_1+y_1} \\
y_2\frac{x_1+y_1}{x_2+y_2} & x_2\frac{x_1+y_1}{x_2+y_2} \\
\end{array}\right].
\end{displaymath}
One thinks of this procedure as pushing the $x$ and $y$ columns past each other and accumulating extra factors in the process.  Define maps $s_j: \mathbb{C}^{2 \times n} \to \mathbb{C}^{2 \times n}$ for $j=1,2,\ldots, n$ where $s_j$ applies $\mu$ to columns $j$ and $j+1$ and leaves the rest of the columns fixed.  Column indices are considered modulo $n$, so $s_n$ moves columns $n$ and $1$ past each other.

There is a more general notion of pushing two length $m$ vectors past each other.  This procedure forms the basis for the discrete Toda lattice.  The generalized Toda lattice considered by Iwao in \cite{I} corresponds roughly to the map $s_n\circ\ldots \circ s_1$.  The difference is that the map $s_n$ includes a cyclic shift of the entires of columns $1$ and $n$ in that model.  We will look at the system defined by applying the $s_j$ in a bipartite manner.  Moreover, we assume $m=2$ as our main result does not generalize in a straightforward manner to larger $m$.

Formally, define $F:\mathbb{C}^{2 \times 2n} \times \{0,1\} \to \mathbb{C}^{2 \times 2n} \times \{0,1\}$ by $F(x,\sigma) = (x',1-\sigma)$ where
\begin{displaymath}
x' = \begin{cases}
s_{2n} \circ \cdots \circ s_4 \circ s_2(x), & \sigma = 0 \\
s_{2n-1} \circ \cdots \circ s_3 \circ s_1(x), & \sigma = 1 \\
\end{cases}
\end{displaymath}
Note that $s_j$ is singular when $x_{1,j}=-x_{1,j+1}$ and $x_{2,j}=-x_{2,j+1}$.  As such, let
\begin{align*}
W_0 &= \left\{\left[\begin{array}{ccccccc}
-x_{1,2n} & x_{1,2} & -x_{1,2} & x_{1,4} & -x_{1,4} & \cdots & x_{1,2n} \\
-x_{2,2n} & x_{2,2} & -x_{2,2} & x_{2,4} & -x_{2,4} & \cdots & x_{2,2n} \\
\end{array}
\right]\right\} \\
W_1 &= \left\{\left[\begin{array}{ccccccc}
x_{1,1} & -x_{1,1} & x_{1,3} & -x_{1,3} & \cdots & x_{1,2n-1} & -x_{1,2n-1} \\
x_{2,1} & -x_{2,1} & x_{2,3} & -x_{2,3} & \cdots & x_{2,2n-1} & -x_{2,2n-1} \\
\end{array}
\right]\right\} 
\end{align*} 
and define $U,V \subseteq \mathbb{C}^{2 \times 2n} \times \{0,1\}$ by
\begin{align*}
U &= (W_0 \times \{1\}) \cup (W_1 \times \{0\}) \\
V &= (W_0 \times \{0\}) \cup (W_1 \times \{1\}) 
\end{align*}

\begin{thm} \label{thmToda}
Let $F$, $U$, and $V$ be as above.  Then $U,V$ is a Devron pair of width $n-1$.
\end{thm}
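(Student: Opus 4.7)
The plan is to adapt the proof of Theorem \ref{thmRecut} by letting the Toda map $\mu$ play the role of recutting at a single vertex, as the author signals in introducing this system. The singularity hypotheses of Definition \ref{defDevron} are immediate: for instance, if $(x, 0) \in W_1 \times \{0\} \subseteq U$, then $F^{-1}(x, 0) = (s_1 \circ s_3 \circ \cdots \circ s_{2n-1}(x), 1)$, and in $W_1$ the columns $1$ and $2$ are negatives in both rows, so $s_1$ is undefined; the three other cases in $U$ and $V$ are symmetric.

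The main algebraic task is to check that $\mu$ satisfies the same formal relations driving the recutting argument: $\mu^2 = \mathrm{id}$, commutation $s_j s_{j'} = s_{j'} s_j$ for $|j - j'| \geq 2$, and the braid relation $s_{j-1}\, s_j\, s_{j-1} = s_j\, s_{j-1}\, s_j$. Involutivity is direct since on a second application the scaling factors $t/s$ and $s/t$ (with $s = x_1+y_1$, $t = x_2+y_2$) flip and cancel; commutation is obvious from disjoint supports. The braid relation is a polynomial identity in six unknowns and should hold by direct computation, paralleling Lemma \ref{lemBraid}. Granting these, $s_2, \ldots, s_n$ on $2 \times (n+1)$ matrices realize $S_{n+1}$, and the symmetric-group argument of Lemma \ref{lemOrder} shows that $G = s_n \circ s_{n-1} \circ \cdots \circ s_2$ has order $n$.

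The next step is a grid construction analogous to Lemma \ref{lemJitter}. Given a generic $2 \times (n+1)$ matrix $A = (A_1, \ldots, A_{n+1})$, set $C_{i, i+2k}$ equal to the $i$-th column of $G^k(A)$, yielding a $2n$-periodic array $C_{i, j}$ with $C_{1, j} = A_1$ for all admissible $j$. The Toda analog of Lemma \ref{lemConsistency} states that each primitive diamond $C_{i-1, j},\, C_{i, j-1},\, C_{i, j+1},\, C_{i+1, j}$ arises from a single application of $\mu$, by tracing the composition inside $G$. One then defines $\phi$ to read off a distinguished antidiagonal of the grid into a $2 \times 2n$ matrix landing in $W_0$ or $W_1$; the iterate $F^k(\phi(A), \sigma)$ shifts the read antidiagonal by one unit per step, and after $n-1$ shifts the grid's cyclic return of column contents (consequence of $G^n = \mathrm{id}$) forces alternating columns to be negatives of one another, placing the output in $V$. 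The Zariski-open set of $A$ making every intermediate $s_j$ nonsingular supports the whole construction; since $\phi$ is a rational map between spaces of equal dimension $2(n+1)$ and is generically birational, generic elements of $W_0$ and $W_1$ arise as $\phi(A)$, providing the required generic $x$ in Definition \ref{defDevron}.

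The principal obstacle is that, unlike the recutting proof, the Toda map $G$ fixes only the single boundary column $A_1$ — column $n+1$ is moved by $s_n$ and cycles through positions $2, \ldots, n+1$ — so the grid is asymmetric. The consistency identity and the emergence of negated column pairs along the final antidiagonal must therefore be extracted from the cyclic return of contents rather than from a second constant boundary as in recutting. The cleanest way through this would be an explicit change of coordinates identifying $\mu$ with a recutting-at-a-vertex map, transporting the entire recutting grid wholesale; I do not see such a dictionary immediately, and its construction (or a hands-on verification of the needed identities) is the main technical challenge.
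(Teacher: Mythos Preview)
Your plan hinges on the claim that the $s_j$ built from $\mu$ satisfy the braid relation $s_{j-1}s_js_{j-1}=s_js_{j-1}s_j$.  This is false.  A direct check on, say, the $2\times 3$ matrix with columns $(1,1),(2,3),(4,5)$ shows that $s_1s_2s_1$ and $s_2s_1s_2$ disagree already in the first column (although they do agree on the column products $x_{1,j}x_{2,j}$, which is why the failure is easy to miss).  Without the braid relation the symmetric-group argument behind Lemma~\ref{lemOrder} collapses, your map $G=s_n\cdots s_2$ need not have order $n$, and the grid you build from its iterates has no reason to close up.  The asymmetry you flag at the end (only one boundary column fixed) is a symptom of this same failure, not a separate obstacle.

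The paper's fix is to introduce a second local move $\nu$, acting in the ``perpendicular'' direction, and to base the grid on the associated operators $t_j$ rather than on the $s_j$.  These $t_j$ \emph{do} satisfy the affine symmetric group relations (this is a Lam--Pylyavskyy type identity), so the order argument goes through for them.  The bridge back to $\mu$ is a consistency statement (Lemma~\ref{lemTodaConsistency}): one application of $\mu$ to two adjacent columns of the grid is equivalent to one application of $\nu$ to two adjacent rows, up to signs.  This is exactly the Toda analogue of Lemma~\ref{lemConsistency}, and it is what lets one read the bipartite $F$-dynamics off the $\nu$-grid.  As for the dictionary with recutting you were hoping for: it exists (Proposition~\ref{propTodaCut}), but only as a specialization (rows that are complex conjugates of one another), not as a change of coordinates on all of $\mathbb{C}^{2\times 2n}$.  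The paper uses it solely to import from Section~\ref{secCut} a single generic, everywhere-nonsingular grid, which is all that is needed for the last condition in Definition~\ref{defDevron}.
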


It turns out the polygon recutting can be described as a specialization of the map $\mu$ to the case where the two entries of each column are complex conjugates.  

\begin{prop} \label{propTodaCut}
Let $A, B, C \in \mathbb{R}^2 \cong \mathbb{C}$ and let $B'$ be the result of polygon recutting at $B$.  Define $w, z, w', z' \in \mathbb{C}$ by $w = B-A$, $z= C-B$, $w' = C-B'$, and $z' = B'-A$.  Then
\begin{displaymath}
\mu\left(\left[\begin{array}{cc}
w & z \\
\overline{w} & \overline{z} \\
\end{array}
\right]\right)
= \left[\begin{array}{cc}
\overline{z'} & \overline{w'} \\
z' & w' \\
\end{array}
\right]
\end{displaymath}
\end{prop}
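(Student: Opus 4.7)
The plan is straightforward: identify the complex-coordinate formula for the reflection that defines $B'$, compute $z'$ and $w'$ directly from it, and match the result against the definition of $\mu$. First, I would recall from Lemma \ref{lemConsistency} that $B'$ is characterized by the fact that $A, B, C, B'$ form an isosceles trapezoid with $\overline{BB'}\parallel\overline{AC}$; equivalently, $B'$ is the image of $B$ under reflection in the perpendicular bisector of $\overline{AC}$. Using the standard complex-number formula for reflection in a line through the origin perpendicular to a nonzero vector, after translating the midpoint $(A+C)/2$ to the origin, one obtains
\begin{equation*}
B' = \frac{A+C}{2} - \frac{C-A}{\overline{C-A}}\,\overline{\left(B - \frac{A+C}{2}\right)}.
\end{equation*}

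Second, I would rewrite everything in terms of $w = B-A$ and $z = C-B$. The identities $C-A = w+z$ and $B - (A+C)/2 = (w-z)/2$ turn the reflection formula into an expression in which only $w,z$ and their conjugates appear. Subtracting $A$ from $B'$ and subtracting $B'$ from $C$, a short manipulation using $(\bar w + \bar z) \pm (\bar w - \bar z)$ yields
\begin{align*}
z' &= B' - A = \bar{z}\,\frac{w+z}{\bar w + \bar z}, \\
w' &= C - B' = \bar{w}\,\frac{w+z}{\bar w + \bar z}.
\end{align*}

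Third, I would compare with the definition of $\mu$. Unwinding the formula, $\mu$ applied to the matrix with rows $(w, z)$ and $(\bar w, \bar z)$ produces entries $z\cdot(\bar w + \bar z)/(w+z)$, $w\cdot(\bar w + \bar z)/(w+z)$ in the top row and $\bar z\cdot(w+z)/(\bar w + \bar z)$, $\bar w\cdot(w+z)/(\bar w + \bar z)$ in the bottom row. By the preceding calculation, the bottom row is exactly $(z', w')$, and since the top row is the entrywise complex conjugate of the bottom row it equals $(\bar{z'}, \bar{w'})$. This matches the right-hand side of the proposition. The only potential pitfall is keeping track of the signs and the orientation in the reflection formula; once that is set correctly, the rest is elementary algebra, so there is no substantive obstacle.
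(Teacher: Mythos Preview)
Your argument is correct, but it takes a different route from the paper's. The paper works in the reverse direction: it reads off from the definition of $\mu$ what $z'$ and $w'$ would have to be, namely $z' = \bar z\,(w+z)/(\bar w+\bar z)$ and $w' = \bar w\,(w+z)/(\bar w+\bar z)$, and then verifies \emph{indirectly} that this pair gives the recut vertex. The verification consists of checking that $|z'|=|z|$, $|w'|=|w|$, and $z'+w'=w+z$; these three conditions have exactly two solutions, the trivial one $(z,w)$ and the recut one $(B'-A,\,C-B')$, and a quick check rules out the trivial solution generically.

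Your approach, by contrast, is a direct computation: you write the explicit complex reflection formula for $B'$ across the perpendicular bisector of $\overline{AC}$, substitute $C-A=w+z$ and $B-\tfrac{A+C}{2}=\tfrac{w-z}{2}$, and simplify to obtain the same closed forms for $z'$ and $w'$. This is a bit more hands-on but has the advantage of being fully constructive and leaving no genericity caveat to resolve at the end. The paper's method is a shade slicker in that it never writes down the reflection explicitly, trading that for the small extra step of distinguishing the two solutions. Both are short and valid.
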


\begin{proof}
By the definition of $\mu$, we need to show
\begin{align*}
z' &= \overline{z}\frac{w+z}{\overline{w}+\overline{z}} \\
w' &= \overline{w}\frac{w+z}{\overline{w}+\overline{z}} 
\end{align*}
Taking these as the definition of $z'$ and $w'$, it is clear that $|z'|=|z|$, $|w'| = |w|$, and $z'+w' = w+z$.  In general, there are two pairs $(z',w')$ with these properties, namely $(z,w)$ and $(B'-A,C-B')$ (see Figure \ref{figTodaCut}).  A direct calculation shows that $z' \neq z$ except when $w$ and $z$ are parallel in which case the two solutions are equal.  Therefore $(z',w') = (B'-A, C-B')$.
\end{proof}

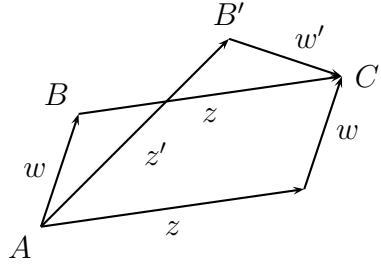
\begin{figure}
\begin{pspicture}(4,4)
\pnode(1,1){A}
\pnode(1.5,2.5){B}
\pnode(3.5,3.5){B1}
\pnode(5,3){C}
\pnode(4.5,1.5){D}

\psline[arrows=->](A)(B)
\psline[arrows=->](B)(C)
\psline[arrows=->](A)(B1)
\psline[arrows=->](B1)(C)
\psline[arrows=->](A)(D)
\psline[arrows=->](D)(C)
\uput[dl](A){$A$}
\uput[ul](B){$B$}
\uput[u](B1){$B'$}
\uput[r](C){$C$}
\uput[l](1.25,1.75){$w$}
\uput[d](3.25,2.75){$z$}
\uput[dr](2.25,2.25){$z'$}
\uput[ur](4.25,3.25){$w'$}
\uput[d](2.75,1.25){$z$}
\uput[r](4.75,2.25){$w$}
\end{pspicture}
\caption{An illustration of the proof of Proposition \ref{propTodaCut}}
\label{figTodaCut}
\end{figure}

As a result, Theorem \ref{thmToda} can be thought of as a generalization of Theorem \ref{thmRecut}, and the proof involves the same basic steps.  The translation from polygons to matrices is complicated by the fact that $\mu$ moves the relevant vectors from the top row to the bottom row.  Define an auxiliary map
\begin{displaymath}
\nu\left(\left[\begin{array}{cc}
x_1 & y_1 \\
x_2 & y_2 \\
\end{array}
\right]\right)
= \left[\begin{array}{cc}
-y_1\frac{x_1-y_2}{x_2-y_1} & -x_1\frac{x_2-y_1}{x_1-y_2} \\
-y_2\frac{x_2-y_1}{x_1-y_2} & -x_2\frac{x_1-y_2}{x_2-y_1} \\
\end{array}\right].
\end{displaymath}
The following is then analogous to Lemma \ref{lemConsistency}.

\begin{lem} \label{lemTodaConsistency}
The conditions 
\begin{displaymath}
\mu\left(\left[\begin{array}{cc}
x_1 & y_1 \\
x_2 & y_2 \\
\end{array}
\right]\right)
= \left[\begin{array}{cc}
y_1' & x_1' \\
y_2' & x_2' \\
\end{array}\right]
\end{displaymath}
and
\begin{displaymath}
\nu\left(\left[\begin{array}{cc}
x_1 & y_1' \\
x_2 & y_2' \\
\end{array}
\right]\right)
= \left[\begin{array}{cc}
-y_1 & -x_1' \\
-y_2 & -x_2' \\
\end{array}\right]
\end{displaymath}
are equivalent.
\end{lem}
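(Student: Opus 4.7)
The plan is to reduce both conditions to the same collection of scaling relations between $(x_1, x_2, y_1, y_2)$ and $(x_1', x_2', y_1', y_2')$, parametrized by a single scalar, and then to check that the two formulas for this scalar coming from $\mu$ and $\nu$ coincide. In spirit this is the direct symbolic analogue of how Lemma~\ref{lemConsistency} is verified once polygon recutting is translated into coordinates.

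First I would unpack the formula for $\mu$. Setting $r = (x_2+y_2)/(x_1+y_1)$, the first condition is equivalent to the four scaling relations
\begin{align*}
y_1' &= r\,y_1, & x_1' &= r\,x_1, & y_2' &= y_2/r, & x_2' &= x_2/r.
\end{align*}
A parallel unpacking of $\nu$, with $s = (x_2 - y_1')/(x_1 - y_2')$, shows that the second condition is equivalent to the same four scaling relations but with $r$ replaced by $s$. The explicit minus signs appearing in the definition of $\nu$ are exactly absorbed by the minus signs on the right hand side of the second condition, so the sign bookkeeping is routine.

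It then remains to show that on the locus where the scaling relations hold, the two formulas give $r=s$. Substituting $y_1' = r y_1$ and $y_2' = y_2/r$ into the defining expression for $s$ and multiplying numerator and denominator by $r$ yields
\begin{displaymath}
s \;=\; \frac{r(x_2 - r y_1)}{r x_1 - y_2},
\end{displaymath}
which equals $r$ if and only if $r(x_1 + y_1) = x_2 + y_2$, and this is simply the definition of $r$. The converse direction is the same identity read backwards, so no additional work is needed.

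I do not anticipate any real obstacle. The only points requiring care are keeping track of the transposition of the roles of the $x$ and $y$ columns between the input and output of $\mu$ (so that the prime labels end up on the correct entries) and verifying that the minus signs in the statement exactly match those in the definition of $\nu$ — both of which reduce to straightforward checks.
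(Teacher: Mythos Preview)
Your argument is correct and is essentially the same direct verification the paper carries out: the paper assumes the first condition, computes $x_1 - y_2' = (x_1x_2 - y_1y_2)/(x_2+y_2)$ and $x_2 - y_1' = (x_1x_2 - y_1y_2)/(x_1+y_1)$, and observes that their ratio is exactly your $1/r$, which makes each entry of the $\nu$-equation collapse to the claimed value. Your version is the same computation organized around the single scaling parameter $r=s$, which makes the symmetry between the two directions more transparent but is not a different method.
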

\begin{proof}
This is a straightforward verification.  For example, suppose the first condition holds and compare the top left entries of both sides of the second condition.  On the right is $-y_1$ and on the left, by definition of $\nu$, is
\begin{displaymath}
-y_1'\frac{x_1-y_2'}{x_2-y_1'}
\end{displaymath}
Now 
\begin{displaymath}
x_1-y_2' = x_1-y_2\frac{x_1+y_1}{x_2+y_2} = \frac{x_1x_2-y_1y_2}{x_2+y_2}
\end{displaymath}
and
\begin{displaymath}
x_2-y_1' = x_2 - y_1\frac{x_2+y_2}{x_1+y_1} = \frac{x_1x_2-y_1y_2}{x_1+y_1}
\end{displaymath}
Therefore
\begin{displaymath}
-y_1'\frac{x_1-y_2'}{x_2-y_1'} = -y_1\frac{x_2+y_2}{x_1+y_1}\frac{x_1+y_1}{x_2+y_2} = -y_1
\end{displaymath}
as desired.
\end{proof}

Define $t_j: \mathbb{C}^{2 \times n} \to \mathbb{C}^{2 \times n}$ for $j=1,2,\ldots, n$ as the map that applies $\nu$ to columns $j$ and $j+1$ and leaves the rest fixed.  The $t_j$ satisfy the relations of the affine symmetric group, a fact that can be verified directly or deduced from Theorem 6.3 of \cite{LP}.

\begin{lem}
The functions $t_i$ satisfy the relations of the affine symmetric group, namely
\begin{itemize}
\item $t_i^2 = 1$ for $i=1,2,\ldots, n$
\item $t_it_j = t_jt_i$ if $i-j \equiv 2,3,\ldots, n-2 \pmod{n}$
\item $t_it_jt_i = t_jt_it_j$ if $i-j \equiv \pm 1 \pmod{n}$.
\end{itemize}
\end{lem}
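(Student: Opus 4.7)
The plan is to verify the three families of relations separately. The involution $t_i^2 = 1$ reduces to showing that $\nu$ is an involution on $\mathbb{C}^{2 \times 2}$. Writing $\nu(x_1, x_2; y_1, y_2) = (X_1, X_2; Y_1, Y_2)$ explicitly, one observes after cancellation that $X_1 - Y_2 = x_1 - y_2$ and $X_2 - Y_1 = x_2 - y_1$, so in particular the ratio $\lambda := (x_1 - y_2)/(x_2 - y_1)$ appearing in the definition is preserved by a single application of $\nu$. Substituting this identity into the formula for $\nu$ applied to $(X_1, X_2; Y_1, Y_2)$, every factor telescopes and one recovers $(x_1, x_2; y_1, y_2)$ entry-by-entry.

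The commutation relation $t_it_j = t_jt_i$ for $i - j \equiv 2, 3, \ldots, n - 2 \pmod{n}$ is immediate, since in this range the column pairs $\{i, i+1\}$ and $\{j, j+1\}$ are disjoint and the maps $t_i$ and $t_j$ operate on disjoint entries of the matrix.

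The hard part is the braid relation $t_it_{i+1}t_i = t_{i+1}t_it_{i+1}$. Because both sides fix all columns outside $\{i, i+1, i+2\}$, it suffices to verify the identity on a generic three-column block $\bigl[\begin{smallmatrix} x_1 & y_1 & z_1 \\ x_2 & y_2 & z_2 \end{smallmatrix}\bigr]$. I would proceed by direct symbolic computation: compute the two intermediate states on each side, carrying along auxiliary quantities of the form $\lambda$ introduced above to keep denominators under control, and then compare the six resulting rational functions in $x_i, y_i, z_i$ entrywise. I expect this step to be the main obstacle, not because of any subtle mathematics but because of the size of the expressions involved; organizing the computation so that repeated subexpressions are factored out once, along the lines of how $\lambda$ streamlined the involution step, should keep it manageable.

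As an alternative route, one could invoke Theorem 6.3 of \cite{LP}, which derives such braid relations for a broader family of rational column transformations in a network-theoretic framework; this is the reference the paper points to, but for self-containedness the direct verification is preferable.
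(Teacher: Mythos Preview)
Your proposal is correct and aligns with the paper's own treatment: the paper does not give a detailed proof but simply remarks that the relations ``can be verified directly or deduced from Theorem 6.3 of \cite{LP}.'' You carry out precisely the direct verification the paper alludes to, and you also mention the \cite{LP} alternative, so there is no substantive difference in approach. Your observation that $X_1 - Y_2 = x_1 - y_2$ and $X_2 - Y_1 = x_2 - y_1$ under $\nu$, making the ratio $\lambda$ invariant, is a clean way to organize the involution check and goes a bit beyond what the paper spells out.
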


\begin{proof}[Proof of Theorem \ref{thmToda}]
Let $y \in \mathbb{C}^{2\times n}$ and let $y_1,y_2,\ldots, y_n$ denote the columns of $y$.  Consider just the maps $t_1,\ldots, t_{n-1}$ which satisfy the relations of the symmetric group $S_n$.  Let $G_0$ be the composition of such $t_j$ with $j$ even, and $G_1$ the same with $j$ odd.  For example, if $n$ is even then $G_0 = t_{n-2} \circ \ldots \circ t_4 \circ t_2$.  Consider the sequence of matrices 
\begin{displaymath}
y, G_0(y), G_1(G_0(y)), G_0(G_1(G_0(y))), \ldots
\end{displaymath}
and let $\tilde{w}_{j,k}$ be the $j$th column of the $k$th matrix in this sequence, starting with $\tilde{w}_{j,0} = y_j$.  One can check that $G_1 \circ G_0$ has order $n$, so $\tilde{w}_{j,k+2n} = \tilde{w}_{j,k}$ for all $j$ and $k$.  Let $w_{j,k} = (-1)^k\tilde{w}_{j,k}$.  Finally, let $x \in \mathbb{C}^{2\times 2n}$ be the matrix with columns $w_{1,1}, w_{1,2}, \ldots, w_{1,2n}$.

Note the map $G_0$ does not affect the first column, so $\tilde{w}_{1,k} = \tilde{w}_{1,k+1}$ whenever $k$ is even.  Therefore $w_{1,k} = -w_{1,k+1}$ for $k$ even.  These are columns $k$ and $k+1$ of $x$ so it follows that $(x,1) \in U$.  The grid of the $w_{j,k}$ has the property that
\begin{equation}
\nu([\begin{array}{cc} w_{j,k} & w_{j+1,k} \\ \end{array}]) 
= -[\begin{array}{cc} w_{j,k+1} & w_{j+1,k+1} \\ \end{array}]
\label{eqnuGrid}
\end{equation}
if $j+k$ is even.  It follows by Lemma \ref{lemTodaConsistency} that 
\begin{equation}
\mu([\begin{array}{cc} w_{j,k} & w_{j,k+1} \\ \end{array}]) 
= [\begin{array}{cc} w_{j+1,k} & w_{j+1,k+1} \\ \end{array}]
\label{eqmuGrid}
\end{equation}
whenever $j+k$ is even.  This, combined with $w_{j,k+2n}=w_{j,k}$ shows that $w_{j,1},\ldots, w_{j,2n}$ are the columns of $F^{j-1}(x,1)$.  Consider $F^{n-1}(x,1)$, whose columns are $w_{n,1},\ldots, w_{n,2n}$.  Either $G_0$ or $G_1$ fixes the $n$th column (depending on if $n$ is even or odd respectively) and it follows that $F^{n-1}(x,1) \in V$.

We have $F^{n-1}(x) \in V$ for all $x \in U$ that arise from a matrix $y \in \mathbb{C}^{2 \times n}$ as above.  It remains to show that generic $x$ are of this form, and that $F^{n-1}:U \to V$ is generically defined and invertible.  All of this would follow from the existence of a grid $w_{j,k}$ of elements of $\mathbb{C}^2$ as above with each instance of $\nu$ and $\mu$ in \eqref{eqnuGrid} and \eqref{eqmuGrid} nonsingular.  This in turn follows from the analogous result for polygon recutting, which in light of Proposition \ref{propTodaCut} is a special case.  

More specifically, construct an array of points $C_{i,j}$ in $\mathbb{R}^2$ indexed by $i+j$ even, $1 \leq i \leq n+1$ as in Lemma \ref{lemJitter}.  This can be thought of as a sequence of $n$ polygons, where the $i$th polygon's vertices are the $C_{i,j}$ and $C_{i+1,j}$.  Define $z_{j,k} \in \mathbb{C}$ for $1 \leq j \leq n$ by
\begin{displaymath}
z_{j,k} = \begin{cases}
C_{j+1,k+1} - C_{j,k}, & j+k \textrm{ even} \\
C_{j,k+1} - C_{j+1,k}, & j+k \textrm{ odd} \\
\end{cases}
\end{displaymath}
Define $w_{j,k} \in \mathbb{C}^2$ by putting $z_{j,k}$ in either the first or second entry depending on the parity of $j$, and putting $\overline{z_{j,k}}$ in the other entry.  By Proposition \ref{propTodaCut} and the properties of the $C_{i,j}$, \eqref{eqnuGrid} and \eqref{eqmuGrid} hold as desired.

\end{proof}

\section{A few conjectures} \label{secConj}
Part of the appeal of the pentagram map and polygon recutting is the simplicity of the underlying constructions that drive the dynamics.  One might expect that making the propagation rules more complicated would lead to less well-behaved systems.  Nonetheless, we have observed that the Devron property continues to hold in large number of settings.  What follows are three systems for which computer experiments strongly suggest that the Devron property is present.

The first system has some resemblance to the pentagram map, but circles through vertices are used instead of diagonals in the construction of new points.  Suppose $A,B,C,D,E$ are five consecutive vertices in a polygon.  Draw the unique circle passing through $A$, $B$, and $C$, and the unique circle passing through $C$, $D$, and $E$.  These two circles intersect at two points, one of which is $C$.  Call the other point of intersection $C'$.  The operation of replacing $C$ with $C'$ in the polygon is called \emph{circle intersection}.  Figure \ref{figCircle} illustrates the construction of $C'$.  We consider bipartite dynamics on even sides polygons.

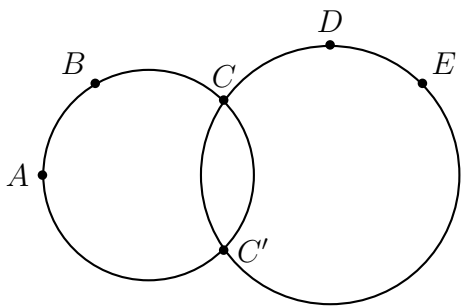
\begin{figure}
\begin{pspicture}(8,4.5)
\rput(0,-1){
\pscircle(2,3){1.414}
\pscircle(4.414,3){1.732}
\psdots(.59,3)(1.29,4.22)(3,2)(3,4)(4.414,4.732)(5.64,4.22)
\uput[l](.59,3){$A$}
\uput[ul](1.29,4.22){$B$}
\uput[u](3,4){$C$}
\uput[r](3,2){$C'$}
\uput[u](4.414,4.732){$D$}
\uput[ur](5.64,4.22){$E$}
}
\end{pspicture}
\caption{The construction used in circle intersection}
\label{figCircle}
\end{figure}

\begin{conj} \label{conjCirc}
Let $A$ be a $2n$-gon, $n\geq 3$ with the property that circle intersection takes half of the vertices to a common point.  Perform bipartite circle intersection on $A$ beginning with the other half of the vertices.  Then after $2n-6$ steps another polygon is reached with the property that circle intersection takes half of its vertices to a common point.
\end{conj}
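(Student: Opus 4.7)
My approach is to mimic the pentagram map argument of Section~\ref{secPenta}: find coordinates in which bipartite circle intersection becomes the $Y$-system map $G$ on some $\overline{X}_\Lambda$, then specialize Theorem~\ref{thmYDevron}. Both the dynamics $A \mapsto T(A)$ and the two singular classes in the conjecture are M\"obius invariant, so the natural underlying space is the moduli of $2n$-tuples in $\mathbb{CP}^1$ modulo M\"obius transformations, with cross-ratios as coordinates. For each vertex $A_i$ and each iterate $k$, I would define a cross ratio $y_{i,k}$ in analogy with the pentagram construction, built from four points among the vertices of $T^{k-1}(A)$, $T^{k}(A)$, $T^{k+1}(A)$ that the circle intersection construction forces to lie on a common circle (and hence have real cross-ratio). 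I expect the $y_{i,k}$ to satisfy \eqref{eqYRec} for some periodicity lattice $\Lambda$, and since the predicted width is $2(n-3)$, the natural first guesses are $\det\Lambda = 2n-5$ or $\det\Lambda = 2n-6$ depending on how the $\rho \equiv 1$ hypersurface of $\overline{X}_\Lambda$ shifts the count.

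To identify the two singular classes in $Y$-system terms, I would apply a M\"obius transformation sending the common image point $P$ of the hypothesis to $\infty$. Under this transformation, each of the $n$ circles through a consecutive triple $(A_{i-1},A_i,A_{i+1})$ with $i$ in the ``special'' half becomes a straight line, so those triples are forced to be collinear; equivalently, the polygon degenerates so that every other vertex lies on the line through its two neighbors. In this degenerate picture the cross-ratios $y_{i,-1}$ are explicit and a short computation should show that all of them equal $-1$, matching the input singularity set $U$ of Theorem~\ref{thmYDevron}. A dual computation (interchanging $T$ and $T^{-1}$) would handle the conclusion side, giving $y_{i,0}=-1$ throughout and placing the configuration in $V$.

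The principal obstacle is establishing the $Y$-system recurrence itself, the analogue for circle intersection of the recurrence satisfied by pentagram cross-ratios. Unlike the pentagram map, circle intersection does not, to my knowledge, arise from a known cluster algebra, so there is no a priori guarantee that the natural cross-ratios mutate as a $Y$-system; moreover it is not immediately clear which concyclic quadruples at levels $k-1$ and $k+1$ should appear on the left side of \eqref{eqYRec}. I would begin with $n=4$ and $n=5$ computed numerically, both to pin down $\det(\Lambda)$ exactly from the observed width and to locate the correct quadruples. If a clean $Y$-system structure fails to emerge, my fallback plan is to adapt the recutting argument of Lemma~\ref{lemJitter}: build a two-parameter array $C_{i,j}$ of intermediate polygons whose rows are related by single circle intersections, and use Miquel's six-circle theorem as the consistency relation in place of Lemma~\ref{lemConsistency}, then track the propagation of the singular locus across the array exactly as in the proof of Theorem~\ref{thmRecut}.
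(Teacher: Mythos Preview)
The statement you are addressing is a \emph{conjecture} in the paper, not a theorem: it appears in Section~\ref{secConj} among ``a few conjectures'' for which the author explicitly says ``computer experiments strongly suggest that the Devron property is present'' but ``we do not have a proof that it holds.'' The paper offers no proof; the only argument given is a verification of the degenerate case $n=3$ (where $2n-6=0$ steps are required), which reduces to the classical Euclidean fact that the three circles through $BCD$, $DEF$, $FAB$ of a hexagon with $A,B,C$ collinear, $C,D,E$ collinear, and $E,F,A$ collinear meet in a common point.

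Consequently there is no ``paper's proof'' to compare your proposal against. What you have written is a reasonable research plan rather than a proof, and you are candid about this: you identify the crux as establishing a $Y$-system recurrence for suitable cross-ratios, and you note correctly that no cluster structure for circle intersection is known. That is exactly the gap. Your fallback idea of using Miquel's theorem as a consistency relation in a grid argument \`a la Lemma~\ref{lemJitter} is promising and arguably more natural here than the $Y$-system route, since Miquel configurations are the circle-geometry analogue of the isosceles-trapezoid consistency in recutting. But until one of these two mechanisms is actually established, the conjecture remains open, in the paper and in your proposal alike.
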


As an example, suppose $n=3$.  Then $2n-6=0$ so no steps are needed to get from the beginning to the end configuration.  Consider the (degenerate) hexagon $ABCDEF$ pictured in the left of Figure \ref{figCircEx}.  Circle intersection naturally takes place on the Riemann sphere $\mathbb{C} \cup \{\infty\}$.  As such, the ``circles'' through $A,B,C$ and $C, D, E$ are actually lines and their other intersection point is $C'=\infty$.  The same is true for performing circle intersection at $E$ and $A$, so $A'=C'=E'$.  This is the assumption of the conjecture.  The conclusion is that doing circle intersection instead at $B$, $D$, and $F$ produces $B'=D'=F'$.  This follows from a result of Euclidean geometry that asserts that the circle through $B,C,D$, the circle through $D,E,F$, and the circle through $F,A,B$ have a common point of intersection (see the right half of Figure \ref{figCircEx}).

\begin{figure}
\begin{pspicture}(9,4)
\pnode(1,3){A}
\pnode(2,3){B}
\pnode(4,3){C}
\pnode(3,2){D}
\pnode(2,1){E}
\pnode(1.75,1.5){F}
\pspolygon[showpoints=true](A)(B)(C)(D)(E)(F)
\uput[ul](A){$A$}
\uput[u](B){$B$}
\uput[ur](C){$C$}
\uput[dr](D){$D$}
\uput[d](E){$E$}
\uput[dl](F){$F$}

\rput(5,0){
\pnode(1,3){A}
\pnode(2,3){B}
\pnode(4,3){C}
\pnode(3,2){D}
\pnode(2,1){E}
\pnode(1.75,1.5){F}
\psdots(A)(B)(C)(D)(E)(F)
\pscircle(3,3){1}
\pscircle(1.5,2.3125){0.85}
\pscircle(2.458,1.542){0.710}
\uput[ul](A){$A$}
\uput[ul](B){$B$}
\uput[r](C){$C$}
\uput[dr](D){$D$}
\uput[d](E){$E$}
\uput[dl](F){$F$}
}
\end{pspicture}
\caption{An illustration of the case $n=3$ of Conjecture \ref{conjCirc}}
\label{figCircEx}
\end{figure}
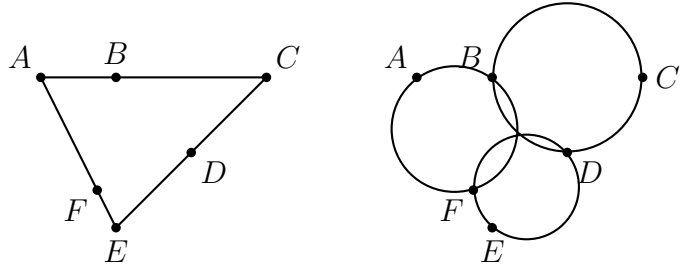

Another natural place to look for the Devron property are the higher dimensional pentagram maps introduced by B. Khesin and F. Soloviev \cite{KS}.  Their systems are similar to those from \cite{GSTV} for which we discussed the Devron property in Section \ref{subsecHigher}.  The difference is that the Khesin Soloviev maps operate on general polygons (sequences of points in $\mathbb{RP}^d$) rather than just corrugated polygons.  For $d=3$, the map takes a polygon $A$ to one who's $i$th vertex is the intersection of the line through $A_{i-1}, A_{i+1}$ and the plane through $A_{i-2},A_i, A_{i+2}$.  Denote this operation $F$.  Below, the \emph{faces} of $A$ refer to the planes passing through triples of consecutive vertices of $A$.

\begin{conj} Let $A$ be a periodic sequence of $2n$ points in $\mathbb{RP}^3$ with $n \geq 4$.  Suppose there are two points $P$ and $Q$ such that the faces of $A$ pass through either $P$ or $Q$ in an alternating fashion.  Let $B=F^{n-3}(A)$.  Then $B_1,B_3,\ldots, B_{2n-1}$ are coplanar and $B_2,B_4,\ldots, B_{2n}$ are also coplanar.
\end{conj}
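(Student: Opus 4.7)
The plan is to reduce the 3D assertion to a pair of coupled planar problems by projecting from $P$ and $Q$. After a projective normalization I would place $P$ and $Q$ at convenient positions, so that the hypothesis that the face through $A_{i-1},A_i,A_{i+1}$ passes through $P$ for $i$ odd and through $Q$ for $i$ even becomes an explicit family of $2n$ linear conditions on the coordinates of $A$. Under the central projection $\pi_P \colon \mathbb{RP}^3 \setminus \{P\} \to \mathbb{RP}^2$, the image $\pi_P(A)$ is a planar polygon in which every odd-indexed vertex lies on the line joining its two even-indexed neighbors; symmetrically, $\pi_Q(A)$ is flat at its even-indexed vertices. Together these two shadows record $A$ up to the residual projective data, since a point of $\mathbb{RP}^3$ is recovered from two distinct central projections.

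Next I would describe how the Khesin-Soloviev map $F$ acts on the pair $(\pi_P(A), \pi_Q(A))$. The construction $B_i = \meet{\ell_i}{\Pi_i}$ with $\ell_i = \join{A_{i-1}}{A_{i+1}}$ and $\Pi_i$ the plane through $A_{i-2},A_i,A_{i+2}$ decomposes cleanly: projection preserves the line-through-two-points operation, and the 3-point plane upstairs becomes a line (or the full plane) downstairs. One then hopes to show that the projected sequences $\pi_P(F^k(A))$ and $\pi_Q(F^k(A))$ evolve according to a coupled planar rule whose composition along the two parities essentially realizes classical pentagram-type dynamics. The Devron width $n-3$ should then emerge by specializing Theorem \ref{thmDevronT} to each projected shadow; the count is consistent with treating an $n$-gon's worth of effective data in each projection rather than $2n$, and with the appearance of $m-2$ in Schwartz's closed axis-aligned result. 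The conclusion that the odd and even vertices of $B$ lie on two planes corresponds, after unprojecting, to the $\pi_P$-shadow becoming collinear at odd positions and the $\pi_Q$-shadow becoming collinear at even positions.

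The main obstacle is that, unlike the corrugated situation treated in Section \ref{subsecHigher}, there is no known $Y$-system or cluster-algebraic description of the Khesin-Soloviev map on general (non-corrugated) polygons, so Theorem \ref{thmYDevron} cannot be invoked directly. A successful proof must therefore either (a) establish such a presentation for the restricted family in the hypothesis by writing $F$ in affine coordinates adapted to the alternating flatness and identifying an underlying mutation pattern, or (b) port Schwartz's original determinantal argument for Theorem \ref{thmDevronT} to three dimensions, using the projective normalization of $P,Q$ and a careful tracking of incidences through the iteration. Either route must finally verify the generic conditions of Definition \ref{defDevron}: that $F^{n-3}$ is defined on a Zariski-open subset of the hypothesized family, and that $F^{-(n-3)}$ is defined on the image. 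These should follow routinely from the non-degeneracy of the two projections at a generic input, but the identification of the correct coupled-planar dynamics, and in particular the sharp exponent $n-3$, is where I expect the real difficulty to lie.
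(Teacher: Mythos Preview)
The statement you are attempting to prove is labeled a \emph{Conjecture} in the paper, and it appears in Section~\ref{secConj} (``A few conjectures''), where the author explicitly says the evidence is experimental. There is no proof in the paper to compare your proposal against; the result is open.

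As for the proposal itself: you correctly identify that the lack of a $Y$-system or cluster description for the Khesin--Soloviev map on non-corrugated polygons is the central obstruction, and you are honest that your sketch does not overcome it. The projection idea from $P$ and $Q$ is natural, but the step where you ``hope to show'' that $\pi_P(F^k(A))$ and $\pi_Q(F^k(A))$ evolve by a coupled planar pentagram-type rule is precisely the content of the conjecture, not a lemma you can cite. In particular, the plane $\Pi_i$ through $A_{i-2},A_i,A_{i+2}$ does \emph{not} in general project under $\pi_P$ to the line through the images of those three points (since they need not be $\pi_P$-collinear), so the decomposition of $F$ under projection is not as clean as you suggest. Your appeal to Theorem~\ref{thmDevronT} to extract the width $n-3$ is also heuristic: that theorem concerns closed $2m$-gons and gives width $m-2$, and it is not clear what the ``effective $n$-gon'' in each shadow is or why the two coupled shadows would synchronize to give a single sharp exponent. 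What you have written is a reasonable outline of where one might look for a proof, but it does not constitute one; the paper's author presumably saw similar avenues and still recorded the statement as a conjecture.
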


The last system takes place in $\mathbb{R}^3$ and uses lines as the primitive objects rather than points.  Let $X$ be the set of cyclically ordered $2n$-tuples of lines $L_1,L_2,\ldots, L_{2n}$ in $\mathbb{R}^3$ with the property that for each $i$ the line $L_i$ intersects each of $L_{i-3}$, $L_{i-1}$, $L_{i+1}$, and $L_{i+3}$.  According to Schubert calculus, given four generic lines in 3-space there are exactly two others which intersect all of the original four.  Define a \emph{Schubert flip} to be the operation that replaces $L_i$ with the unique other line intersecting each of $L_{i\pm3}, L_{i\pm1}$.

\begin{conj}
Let $L_1,\ldots, L_{2n}$, $n \geq 6$, be a sequence of lines as before and assume that $L_2=L_4=\ldots=L_{2n}$.  Apply Schubert flips in a bipartite manner beginning with the even lines.  After $2n-7$ steps, another configuration is reached with half of the lines being equal.
\end{conj}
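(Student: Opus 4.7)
The plan is to adapt the strategy of Section~\ref{secPenta}, where the pentagram map and its higher dimensional analogues were shown to exhibit the Devron property by recognizing them as specializations of a $Y$-system and invoking Theorem~\ref{thmYDevron}. The first step is to introduce cross-ratio coordinates on the configuration space $X$. For each line $L_i$, the four incident lines $L_{i\pm 1}$ and $L_{i\pm 3}$ meet $L_i$ in four collinear points, giving a well-defined negated cross-ratio $y_i$. Iterating bipartite Schubert flips produces a grid of configurations and thus a two-dimensional array of cross-ratios $y_{i,k}$. The key geometric claim to verify is that these satisfy a relation of the form~\eqref{eqYRec}. The jump of $3$ in the incidence structure of $X$ suggests the ansatz $Y_{i,j,k} = y_{i+3j,k}$, in direct analogy with the packing used for the higher pentagram maps in Section~\ref{subsecHigher}.

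Assuming the $Y$-system identification succeeds, the spatial period $2n$ and step $3$ produce a periodicity lattice $\Lambda = \langle (3,1),(2n,0)\rangle$, so the restriction of the $Y$-system map $G$ to $\overline{X}_\Lambda$ governs the dynamics. The initial hypothesis $L_2=L_4=\cdots=L_{2n}$ forces pairs of the four intersection points on each odd-indexed line to collide (for instance $L_{2k-1}\cap L_{2k-2}=L_{2k-1}\cap L_{2k}$), which sets the corresponding cross-ratios equal to $-1$. This places the configuration in the set $\overline{U}$ of Theorem~\ref{thmYDevron}, which then produces an iterate in $\overline{V}$. Translating $\overline{V}$ back through the cross-ratio definition forces another collision of intersection points, and this should geometrically mean that half the lines of the resulting configuration coincide, matching the statement of the conjecture.

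The main obstacle is the first step: verifying that a Schubert flip acts on the $y$-variables as a $Y$-system mutation. Unlike the pentagram map, where collinearity of the relevant quadruples is built directly into the construction, Schubert flips are defined through the Pl\"ucker quadric and their effect on a natural cross-ratio is not immediately transparent. I would approach this either by an explicit Pl\"ucker-coordinate calculation of how the flip transforms the cross-ratios of the two affected host lines, or by identifying $X$ with a cluster variety whose mutation class contains the Schubert flip dynamics. A secondary issue is that Theorem~\ref{thmYDevron} yields only a width bound of $\det(\Lambda)-1=2n-1$, whereas the conjecture predicts the sharper $2n-7$; closing this gap would likely require finding additional conserved quantities particular to this system that further restrict the effective phase space, or a direct geometric argument in the spirit of Schwartz's original proof of Theorem~\ref{thmDevronT} for closed polygons, where the Devron pair is much narrower than what the $Y$-system framework alone produces.
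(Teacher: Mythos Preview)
The statement you are addressing is a \emph{conjecture}, not a theorem: the paper places it in Section~\ref{secConj} among systems for which the Devron property has been observed in computer experiments but for which no proof is known. There is therefore no proof in the paper to compare your proposal against.

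As a research plan your outline is reasonable, and you are honest about its gaps, but those gaps are the whole problem. The first obstacle you name---showing that a Schubert flip acts on suitable cross-ratio coordinates as a $Y$-system mutation---is not a lemma waiting to be verified; nothing in the paper or the surrounding literature indicates that this line-configuration system fits the $Y$-system framework at all. The incidence pattern (each $L_i$ meets $L_{i\pm 1}$ and $L_{i\pm 3}$) is suggestive, but the Schubert flip is defined via an algebraic condition on the Pl\"ucker quadric rather than by intersecting flats, and there is no evident reason the resulting transformation of cross ratios should take the clean product form~\eqref{eqYRec}. Absent that identification, the rest of the argument does not get off the ground.

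The second obstacle is also real even if the first were resolved. Theorem~\ref{thmYDevron} would give a width bound of $\det(\Lambda)-1$, and under any natural packing this exceeds the conjectured $2n-7$. You correctly note that the analogous gap already appears for closed polygons under the ordinary pentagram map, where the true width $n/2-2$ is much smaller than the $Y$-system bound $n-1$; closing it there required a separate argument. So even in the optimistic scenario your plan would establish a Devron pair of some width, not the specific width $2n-7$ asserted in the conjecture.
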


\end{document}